\DeclareRobustCommand{\SkipTocEntry}[4]{}
\newcommand{\R}{\mathbb{R}}
\newcommand{\D}{\mathbb{D}}
\newcommand{\N}{\mathbb{N}}
\newcommand{\Z}{\mathbb{Z}}
\newcommand{\Bc}{\mathcal{B}}
\newcommand{\Fc}{\mathcal{F}}
\newcommand{\Cc}{\mathcal{C}}
\newcommand{\Nc}{\mathcal{N}}
\newcommand{\Sc}{\mathcal{S}}
\newcommand{\Ac}{\mathcal{A}}
\newcommand{\Hc}{\mathcal{H}}
\newcommand{\Ic}{\mathcal{I}}
\newcommand{\Gc}{\mathcal{G}}
\newcommand{\Rc}{\mathcal{R}}
\newcommand{\Pc}{\mathcal{P}}
\newcommand{\Tc}{\mathcal{T}}
\newcommand{\Wc}{\mathcal{W}}
\newcommand{\diam}{\text{diam}}
\newcommand{\dist}{\text{dist}}
\newcommand{\osc}[1]{\underset{#1}{\text{osc}} \,}
\newcommand{\eps}{\varepsilon}
\newtheorem{theorem}[equation]{Theorem}
\newtheorem*{theorem*}{Theorem}
\newtheorem{lemma}[equation]{Lemma}
\newtheorem{corollary}[equation]{Corollary}
\newtheorem{proposition}[equation]{Proposition}
\theoremstyle{definition}
\newtheorem{defin}[equation]{Definition}
\newtheorem*{defin*}{Definition}
\newtheorem*{question*}{Question}
\newtheorem{remark}[equation]{Remark}
\newtheorem*{remark*}{Remark}
\newtheorem{notation}[equation]{Notation}
\newtheorem*{notation*}{Notation}
\numberwithin{equation}{section}
\title[Uniform rectifiability and $\eps$-approximability in $L^p$]{Uniform rectifiability \\ and $\eps$-approximability of harmonic functions in $L^p$}
\author{Steve Hofmann \and Olli Tapiola}
\address{Steve Hofmann, Department of Mathematics, University of Missouri, Columbia, MO 65211, USA}
\email{hofmanns@missouri.edu}
\address{Olli Tapiola, Department of Mathematics and Statistics, P.O. Box 35 (MaD), FI-40014 University of Jyv\"askyl\"a, Finland}
\email{olli.m.tapiola@gmail.com}
\date{May 17, 2019}
\thanks{S.H. was supported by NSF grant DMS-1664047. O.T. was supported by Emil Aaltosen S\"a\"ati\"o through Foundations' Post Doc Pool grant. 
In the previous stages of this work, he was supported by the European Union through T. Hyt\"onen's ERC Starting 
Grant 278558 ``Analytic-probabilistic methods for borderline singular integrals'' and the Finnish Centre of Excellence in Analysis 
and Dynamics Research.}
\begin{document}
\begin{abstract}
  Suppose that $E \subset \R^{n+1}$ is a uniformly rectifiable set of codimension $1$. We show that every 
  harmonic function is $\eps$-approxi\-mable in $L^p(\Omega)$ for every $p \in (1,\infty)$,
  where $\Omega \coloneqq \R^{n+1} \setminus E$. Together with results of many authors this shows that 
  pointwise, $L^\infty$ and $L^p$ type $\eps$-approximability properties of harmonic functions are 
  all equivalent and they characterize uniform rectifiability for codimension $1$ Ahlfors-David regular sets.
  Our results and techniques are generalizations of recent works of T. Hyt\"onen and A. Ros\'en and the first author, J. M. Martell and S. Mayboroda.
\end{abstract}

\maketitle

\section{Introduction}

In many branches of analysis, Carleson measure estimates are powerful tools that are deeply connected to e.g. elliptic partial 
differential equations and geometric measure theory. These estimates are particularly useful for measures of the type $|\nabla u(Y)| \, dY$ (see e.g. \cite{feffermanstein, garnett}) but the problem is that even strong analytic properties of the function $u$ are not enough to guarantee that the distributional gradient 
defines a measure of this type. The idea behind \emph{$\eps$-approximability} is that although a function may fail this Carleson measure property, 
it can sometimes be approximated arbitrarily well in the $L^\infty$ sense
(typically, if it is the solution to an elliptic partial differential equation) by a function $\varphi$ such that $|\nabla \varphi(Y)| \, dY$ is a Carleson measure.
Starting from the work of N. Th. Varopoulos \cite{varopoulos} and J. Garnett \cite{garnett}, this approximation technique
has had an imporant role in the development of the theory of elliptic partial differential equations. 
It has been used to e.g. explore the absolute continuity properties of elliptic measures \cite{kenigkochpiphertoro, hofmannkenigmayborodapipher} and, 
very recently, give a new characterization of uniform rectifiability \cite{hofmannmartellmayboroda, garnettmourgogloutolsa}.

In this article, we extend the recent results of the first author, J. M. Martell and S. Mayboroda \cite{hofmannmartellmayboroda}
and show that if $E \subset \R^{n+1}$ is a uniformly rectifiable (UR) set of codimension $1$, then 
every harmonic function is $\eps$-approximable in $L^p(\Omega)$ for every $\eps \in (0,1)$ and every $p \in (1,\infty)$, where $\Omega \coloneqq \R^{n+1} \setminus E$.
The $L^p$ version of $\eps$-approximability was recently introduced by T. Hyt\"onen and A. Ros\'en \cite{hytonenrosen} who showed 
that any weak solution to certain elliptic partial differential equations in $\R^{n+1}_+$ is $\eps$-approximable in 
$L^p$ for every $\eps \in (0,1)$ and every $p \in (1,\infty)$.

Let us be more precise and recall the definition of $\eps$-approximability:
\begin{defin}
  \label{defin:eps_app}
  Suppose that $E \subset \R^{n+1}$ is an $n$-dimensional ADR set (see Definition \ref{defin:adr}) and let $\Omega \coloneqq \R^{n+1} \setminus E$
  and $\eps \in (0,1)$. We say that a function $u$ is \emph{$\eps$-approximable} if there exists a constant $C_\eps$ and 
  a function $\varphi = \varphi^\eps \in BV_{\text{loc}}(\Omega)$ satisfying
  \begin{align*}
    \|u-\varphi\|_{L^\infty(\Omega)} < \eps \ \ \ \ \ \text{ and } \ \ \ \ \ \sup_{x \in E, r > 0} \frac{1}{r^n} \iint_{B(x,r) \cap \Omega} |\nabla \varphi(Y)| \, dY \le C_\eps.
  \end{align*}
  Here $\iint_{B(x,r) \cap \Omega} |\nabla \varphi| \, dY$ stands for the total variation of $\varphi$ over $B(x,r) \cap \Omega$
  (see Section \ref{section:bounded_variation}).
\end{defin}
Sometimes $W^{1,1}$ \cite{hofmannkenigmayborodapipher} or $C^\infty$ \cite{garnett, kenigkochpiphertoro} is used 
in the definition instead of $BV_{\text{loc}}$. The first results about $\eps$-approximability showed that every bounded harmonic function $u$, normalized so that
$\|u\|_{L^\infty} \leq 1$, enjoys this this approximation property for every $\eps \in (0,1)$ in the upper half-space $\R^{n+1}_+$ 
\cite{varopoulos, garnett} and in Lipschitz domains \cite{dahlberg}. This is a highly non-trivial property since there exist
bounded harmonic functions $u$ such that $|\nabla u(Y)| \, dY$ is not a Carleson measure \cite{garnett}. The $L^p$ version 
of the property was defined only recently in \cite{hytonenrosen}:
\begin{defin}
  Suppose that $E \subset \R^{n+1}$ is an $n$-dimensional ADR set and let $\Omega \coloneqq \R^{n+1} \setminus E$, $\eps \in (0,1)$ and $p \in (1,\infty)$. 
  We say that a function $u$ is \emph{$\eps$-approximable in $L^p$} if there exists a function $\varphi = \varphi^\eps \in BV_{\text{loc}}(\Omega)$
  and constants $C_p$ and $D_{p,\eps}$ such that
  \begin{align*}
    \left\{ \begin{array}{l}
              \|N_*(u-\varphi)\|_{L^p(E)} \lesssim \eps C_p \|N_*u\|_{L^p(E)} \\
              \|\Cc(\nabla \varphi)\|_{L^p(E)} \lesssim D_{p,\eps} \|N_*u\|_{L^p(E)}
            \end{array} \right. ,
  \end{align*}
  where $N_*$ is the non-tangential maximal operator (see Definition \ref{defin:non-tangential}) and
  \begin{align*}
    \Cc(\nabla \varphi)(x) \coloneqq \sup_{r > 0} \frac{1}{r^n} \iint_{B(x,r) \cap \Omega} |\nabla \varphi| \, dY.
  \end{align*}
\end{defin}
 Here, as above, we have written $\iint_{B(x,r) \cap \Omega} |\nabla \varphi| dY$ to denote the total variation of $\varphi$ over $B(x,r) \cap \Omega$; we ask the reader to forgive this abuse of notation. See Section \ref{section:bounded_variation} for details.

In \cite{hytonenrosen}, the authors showed that if $\Omega = \R^{n+1}_+$ and $A \in L^\infty(\R^n; \mathcal{L}(\R^{n+1}))$ satisfies
$\langle A(x)v,v \rangle \ge \lambda_A |v|^2$ for almost every $x \in \R^n$ and all $v \in \R^{n+1} \setminus \{0\}$,
then any weak solution $u$ to the $t$-independent real scalar (but possibly non-symmetric) divergence form elliptic equation 
$\text{div}_{x,t} A(x) \nabla_{x,t} u(x,t) = 0$ is $\eps$-approximable in $L^p$ for any $\eps \in (0,1)$ and any $p \in (1,\infty)$.

If we move from $\R^{n+1}_+$ to the UR context (see Definition \ref{defin:ur}) with no assumptions on connectivity, 
things will not only get more complicated but we also lose many powerful tools. For example, constructing objects 
like Whitney regions and Carleson boxes becomes considerably more difficult and the harmonic measure no longer 
necessarily belongs to the class weak-$A_\infty$ with respect to the surface measure \cite{bishopjones}. 
Despite these difficulties, there exists a rich theory of harmonic analysis and many results on elliptic partial 
differential equations on sets with UR boundaries. Uniform rectifiability 
can be characterized in numerous different ways and many of these characterizations are valid in all codimensions
(see the seminal work of G. David and S. Semmes \cite{davidsemmes_singular, davidsemmes_analysis}). 
For example, UR sets are precisely those ADR sets for which certain types of singular integral operators 
are bounded from $L^2$ to $L^2$. Recently, the first author, Martell and Mayboroda showed that 
if $E$ is a UR set of codimension $1$, then every bounded harmonic function in $\R^{n+1} \setminus E$ is $\eps$-approximable 
for every $\eps \in (0,1)$ \cite{hofmannmartellmayboroda}. After this, it was shown by Garnett, Mourgoglou and Tolsa that 
$\eps$-approximability of bounded harmonic functions implies uniform rectifiability for $n$-ADR sets \cite{garnettmourgogloutolsa}. 
This characterization result was then generalized for a class of elliptic operators by Azzam, Garnett, Mourgoglou and Tolsa 
\cite{azzamgarnettmourgogloutolsa}.

Our main result is the following generalization of the Hyt\"onen-Ros\'en approximation theorem \cite[Theorem 1.3]{hytonenrosen}:
\begin{theorem}
  \label{theorem:main_result}
  Let $E \subset \R^{n+1}$ be a UR set of codimension $1$ and denote $\Omega \coloneqq \R^{n+1} \setminus E$.
  Then every harmonic function in $\Omega$ is $\eps$-approximable in $L^p$ for every $\eps \in (0,1)$ and every $p \in (1,\infty)$
  with $C_p = \|M_\D\|_{L^p \to L^p}$ and $D_p = C_p \|M\|_{L^p \to L^p}/\eps^2$, where $M$ is the Hardy-Littlewood maximal operator and $M_\D$ is its dyadic version (see Section \ref{section:notation}).
\end{theorem}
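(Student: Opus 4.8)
We may assume $N_*u \in L^p(E)$, since otherwise both asserted inequalities are vacuous. The plan is to reduce the statement to the $L^\infty$-type $\eps$-approximability of \emph{bounded} harmonic functions on $\Omega$ (the Hofmann--Martell--Mayboroda theorem \cite{hofmannmartellmayboroda}, used in a localized form over Carleson boxes and sawtooths), by running a Calder\'on--Zygmund-type stopping-time decomposition at the level sets of $M_\D(N_*u)$, in the spirit of the half-space argument of Hyt\"onen--Ros\'en \cite{hytonenrosen}. The ingredient I would isolate first is a scale-invariant local version of the bounded case: for a dyadic cube $Q \in \D(E)$ and a harmonic $v$ on the Carleson box $T_Q$ with $\Norm{v}{L^\infty(T_Q)}\le \Lambda$, there is $\psi \in BV_{\mathrm{loc}}(T_Q)$ with $\Norm{v-\psi}{L^\infty(T_Q)} \lesssim \eps\Lambda$ (on a slightly shrunken box, arranged so as to overlap neighbouring regions) and $\iint_{B(x,r)\cap T_Q}|\nabla\psi|\,dY \lesssim \eps^{-2}\Lambda\,\big(r^n + \sigma(Q\cap B(x,Cr))\big)$ for every ball; the power $\eps^{-2}$ here is what produces the $\eps^{-2}$ in $D_p$, and the same statement holds with $T_Q$ replaced by a sawtooth built from $Q$.

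For the stopping time, for $k \in \Z$ let $\{Q^k_j\}_j$ be the maximal dyadic cubes with $\sigma(Q^k_j)^{-1}\int_{Q^k_j}N_*u\,d\sigma > 2^k$, so that $\Omega_k := \bigcup_j Q^k_j = \{M_\D(N_*u) > 2^k\}$ and $\Omega_{k+1}\subset\Omega_k$. Letting $U_Q\subset\Omega$ denote the Whitney region attached to a dyadic cube $Q$, set $\Rc_k := \bigcup\{U_Q : Q \subset Q^k_j \text{ for some }j,\ Q \text{ not buried deep inside any }Q^{k+1}_i\}$, with a fixed buffer (in both scale and distance) in ``deep inside''. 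The $\Rc_k$ partition $\bigcup_k T_{\Omega_k}$ up to their boundaries, and on $\Rc_k$ one has $|u|\lesssim 2^{k+1}$ from the elementary bound $|u(Y)|\le N_*u(x)$ valid whenever $Y$ lies in the nontangential cone at $x$. Applying the local result on each sawtooth component of $\Rc_k$ with $\Lambda\asymp 2^k$ yields $\varphi_k$ with $\Norm{u-\varphi_k}{L^\infty(\Rc_k)}\lesssim\eps\,2^k$ and $\iint_{B(x,r)\cap\Rc_k}|\nabla\varphi_k| \lesssim \eps^{-2}2^k(r^n+\sigma(\Omega_k\cap B(x,Cr)))$. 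Define $\varphi:=\varphi_k$ on $\Rc_k$; then $\varphi\in BV_{\mathrm{loc}}(\Omega)$ and $\nabla\varphi = \sum_k(\nabla\varphi_k)\chi_{\Rc_k} + \sum_k\tau_k$, where $\tau_k$ is a singular measure carried by the interior-to-$\Omega$ part of $\partial\Rc_k$ with $|\tau_k|\le|\varphi_k-\varphi_{k+1}|\,\mathcal{H}^n\restriction\partial\Rc_k \lesssim \eps\,2^{k+1}\,\mathcal{H}^n\restriction\partial\Rc_k$, because across an interface $\varphi_k$ and $\varphi_{k+1}$ both approximate the same $u$ to within $\eps(2^k+2^{k+1})$.

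Then I would prove the two estimates. For the nontangential one: if $Y$ lies in the cone at $x$ and in $\Rc_k$, the buffer in the construction places $x$ in the corresponding stopping cube $Q^k_j$, so $2^k < \sigma(Q^k_j)^{-1}\int_{Q^k_j}N_*u\,d\sigma \le M_\D(N_*u)(x)$, and hence $|u(Y)-\varphi(Y)|\lesssim\eps\,M_\D(N_*u)(x)$; a supremum over $Y$ gives $N_*(u-\varphi)(x)\lesssim\eps\,M_\D(N_*u)(x)$, so $\Norm{N_*(u-\varphi)}{L^p(E)}\lesssim\eps\,\Norm{M_\D}{L^p\to L^p}\Norm{N_*u}{L^p(E)}$ --- the first bound with $C_p=\Norm{M_\D}{L^p\to L^p}$. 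For the Carleson one, fix $x$ and $r>0$: the absolutely continuous part of $\nabla\varphi$ contributes at most $\eps^{-2}r^{-n}\sum_k 2^k(r^n+\sigma(\Omega_k\cap B(x,Cr)))$ over the $k$ for which $\Rc_k$ meets $B(x,r)$, and the jump part contributes $r^{-n}\sum_k \eps\,2^{k+1}\mathcal{H}^n(\partial\Rc_k\cap B(x,r)) \lesssim \eps\,r^{-n}\sum_k 2^k(r^n+\sigma(\Omega_k\cap B(x,Cr)))$ after bounding $\mathcal{H}^n(\partial T_{\Omega_k}\cap B(x,r))\lesssim r^n+\sigma(\Omega_k\cap B(x,Cr))$ by ADR, so the jumps are harmless and the first term dominates. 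The range of relevant $k$ is controlled by $2^k\lesssim M(N_*u)(x)$ at the top, and using the identity $\sum_{k:\,2^k<M_\D(N_*u)(y)}2^k\asymp M_\D(N_*u)(y)$ one gets
\[
  \frac{1}{r^n}\sum_k 2^k\,\sigma(\Omega_k\cap B(x,Cr)) = \frac{1}{r^n}\int_{B(x,Cr)}\sum_k 2^k\chi_{\Omega_k}\,d\sigma \lesssim \frac{1}{r^n}\int_{B(x,Cr)}M_\D(N_*u)\,d\sigma \lesssim M\big(M_\D(N_*u)\big)(x).
\]
Hence $\Cc(\nabla\varphi)(x)\lesssim\eps^{-2}M(M_\D(N_*u))(x)$ and $\Norm{\Cc(\nabla\varphi)}{L^p(E)}\lesssim\eps^{-2}\Norm{M}{L^p\to L^p}\Norm{M_\D}{L^p\to L^p}\Norm{N_*u}{L^p(E)}$, the second bound with $D_p=C_p\Norm{M}{L^p\to L^p}/\eps^2$.

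The main obstacle is not this $L^p$ bookkeeping but the underlying geometry: building the Whitney regions $U_Q$ and the sawtooth layers $\Rc_k$ over the possibly highly disconnected UR set $E$ with enough uniform structure that (i) the local $L^\infty$ approximant can genuinely be run on each layer and the shrunken ``good'' regions of adjacent layers overlap, and (ii) the interfaces $\partial\Rc_k$ carry $\mathcal{H}^n$-mass $\lesssim r^n+\sigma(\Omega_k\cap B(x,Cr))$, so that the jump contributions survive after the geometric-series collapse. In the half-space \cite{hytonenrosen} this geometry is essentially free; here it requires importing and adapting the Carleson-box and sawtooth machinery of \cite{hofmannmartellmayboroda}, and tracking the $\eps$-dependence carefully enough to land exactly at $\eps^{-2}$.
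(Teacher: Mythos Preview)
Your strategy differs from the paper's in a basic way. You propose to invoke the $L^\infty$ result of \cite{hofmannmartellmayboroda} as a black box on each Calder\'on--Zygmund layer $\Rc_k$ and glue. The paper instead rebuilds the approximant from scratch: it takes the same piecewise-constant construction as in \cite{hofmannmartellmayboroda}, but replaces the fixed oscillation threshold $\eps$ in every stopping condition by the variable threshold $\eps\,M_\D(N_*u)(Q)$. The Calder\'on--Zygmund stopping cubes you introduce (the paper's ``principal cubes'' $\Pc$) appear only as an auxiliary device to prove that the resulting stopping families (``generation cubes'' $G^*$ and ``large oscillation cubes'' $\Rc$) are Carleson; they are \emph{not} the layers on which $\varphi$ is assembled. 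This yields the pointwise bounds $N_*(u-\varphi)\lesssim\eps M_\D(N_*u)$ and $\Cc_\D(\nabla\varphi)\lesssim\eps^{-2}M(M_\D(N_*u))$ directly, without any gluing across layers.

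Your route is more modular in spirit but has a real gap at its load-bearing step. The ``local version'' you isolate---an approximant on a sawtooth with Carleson bound $\lesssim\eps^{-2}\Lambda(r^n+\sigma(Q\cap B(x,Cr)))$---is not a corollary of \cite{hofmannmartellmayboroda} as stated. That paper produces $\psi$ on a full box $T_{Q_0}$ assuming $u$ is bounded on $T_{Q_0}$; you need it on the sawtooth $\Rc_k$ (over $Q^k_j$ with the $Q^{k+1}_i$ carved out), because $u$ is only bounded by $\approx 2^k$ there, not on the whole $T_{Q^k_j}$. Producing such a localized approximant with the Carleson bound expressed relative to the base $\Omega_k$ requires going back into the HMM construction and tracking where the variation lives---which is essentially the work the paper does, just reorganized.

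More concretely, the $r^n$ summand in your local estimate is fatal as written. Summing $\eps^{-2}\sum_k 2^k\cdot r^n/r^n$ over $k$ with $\Rc_k\cap B(x,r)\neq\emptyset$ gives $\approx\eps^{-2}2^{k_{\mathrm{top}}}$, and your claim $2^{k_{\mathrm{top}}}\lesssim M(N_*u)(x)$ is false: from $\Rc_k\cap B(x,r)\neq\emptyset$ you only get a cube $Q\subset Q^k_j\cap B(x,Cr)$ with $\ell(Q)\lesssim r$, but the stopping cube $Q^k_j$ (which carries the average $>2^k$) may be much larger than $r$ and need not contain $x$, so no ball centered at $x$ sees that average. (Think of $N_*u$ supported on a tiny set far from $x$.) To salvage the argument you would need the local Carleson bound purely in the form $\lesssim\eps^{-2}\Lambda\,\sigma(\Omega_k\cap B(x,Cr))$, with no additive $r^n$; this is plausible from the HMM machinery but is exactly the statement whose proof you have deferred. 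A secondary point: the jump bound $|\varphi_k-\varphi_{k+1}|\lesssim\eps 2^k$ at interfaces requires both approximants to satisfy their $L^\infty$ bounds \emph{at} the interface, so your ``buffer'' must be arranged so that each interface lies strictly inside the good region of both adjacent layers---feasible, but another place where the UR geometry has to be handled explicitly rather than by analogy with $\R^{n+1}_+$.
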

In fact, the key ideas of Hyt\"onen and Ros\'en allow us to construct $p$-independent approximating functions. To be more precise, 
let us consider the following pointwise approximating property:
\begin{defin}
  Suppose that $E \subset \R^{n+1}$ is an $n$-dimensional ADR set and let $\Omega \coloneqq \R^{n+1} \setminus E$ and $\eps \in (0,1)$. 
  We say that a function $u$ is \emph{pointwise $\eps$-approximable} if there exists a function $\varphi = \varphi^\eps \in BV_{\text{loc}}(\Omega)$
  and a constant $D_\eps$ such that
  \begin{align*}
    \left\{ \begin{array}{l}
              N_*(u-\varphi)(x) \lesssim \eps M_\D(N_*u)(x) \\
              \Cc_\D(\nabla \varphi)(x) \lesssim D_\eps M(M_\D(N_*u))(x)
            \end{array} \right. 
  \end{align*}
  for almost any $x \in E$, where $\Cc_\D$ is a dyadic version of $\D$ (see Section \ref{section:cc}).
\end{defin}
Since $\Cc(\nabla \varphi)$ and $\Cc_\D(\nabla \varphi)$ are $L^p$-equivalent by Lemma \ref{lemma:Lp-comparability_of_C}, Theorem \ref{theorem:main_result} is 
an immediate corollary of the following result and the $L^p$-boundedness of the Hardy-Littlewood maximal operator and its dyadic versions:
\begin{theorem}
  \label{thm:main_result_pointwise}
  Suppose that $E \subset \R^{n+1}$ is an $n$-dimensional UR set and let $\Omega \coloneqq \R^{n+1} \setminus E$ and $\eps \in (0,1)$. Then every harmonic function
  in $\Omega$ is pointwise $\eps$-approximable.
\end{theorem}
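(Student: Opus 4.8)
The plan is to follow the strategy pioneered by Garnett and by Hyt\"onen--Ros\'en, adapted to the uniformly rectifiable setting in the spirit of Hofmann--Martell--Mayboroda, and to build a single approximating function $\varphi$ that works for all $p$ simultaneously (the theorem is pointwise, so $p$ never appears). First I would fix a harmonic function $u$ in $\Omega$ and, using the dyadic structure adapted to the ADR set $E$ (David--Christ cubes) together with the associated Whitney decomposition of $\Omega$, set up a family of ``good'' regions where $u$ oscillates little and a corresponding stopping-time construction. The key quantitative input is the local version of the Carleson-measure estimate for $|\nabla u|$ on UR sets, i.e. the bound $\iint_{B(x,r)\cap\Omega} |\nabla u(Y)|^2 \,\delta(Y)\,dY \lesssim r^n \big(\osc{}u\big)^2$ type estimates, or rather the $S<N$ (square function versus non-tangential maximal function) estimates that are available in the UR setting; these control how far $u$ strays from being ``flat'' at each scale. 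The approximant $\varphi$ is then built scale by scale: on each Whitney region one replaces $u$ by a suitable average (or a smoothed, piecewise-constant-in-scale version), and the jumps between adjacent regions are exactly what the $BV$ total variation $\Cc_\D(\nabla\varphi)$ sees.

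The heart of the argument is a stopping-time / corona decomposition: starting from a top cube $Q_0$, one stops at maximal subcubes $Q$ where either $u$ has oscillated by more than $\sim \eps$ relative to its value at the top, or where the square function (localized to the associated sawtooth region) exceeds a large multiple of $N_*u$ evaluated on a suitable portion of $E$. Inside each stopping (``coherent'') region the function $\varphi$ is taken constant in the Whitney-scale sense, so $N_*(u-\varphi) \lesssim \eps$ there by construction; summing the contributions of the stopping cubes, the total variation picks up $\sum \ell(Q)^n$ over stopping cubes $Q$, and the stopping criteria let one dominate this sum by $\eps^{-2}$ times a Carleson-type sum that the UR hypothesis (via the $S<N$ estimate and John--Nirenberg / Carleson-packing arguments) controls by $M(M_\D(N_*u))$ pointwise. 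This is where the factors $\eps^{-2}$ and the maximal operators in $D_\eps M(M_\D(N_*u))$ come from: one loss of $\eps^{-1}$ from comparing $\varphi$ to $u$ on good regions, a second from the stopping-time packing, and the composition $M \circ M_\D$ from passing between the ``square-function side'' estimate on sawtooths and its pointwise majorant on $E$.

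I would organize the technical work in the following order: (1) recall/record the Whitney and David--Christ dyadic machinery and the local $S<N$ and Carleson estimates for harmonic functions on UR complements, citing the results used in \cite{hofmannmartellmayboroda}; (2) perform the corona/stopping-time decomposition of an arbitrary top cube, with the two stopping conditions above, and prove the packing (Carleson) estimate for the stopping cubes; (3) define $\varphi$ explicitly as a sum over coherent regions of $u$ times a smooth partition of unity subordinate to the Whitney decomposition of each sawtooth, so that $\varphi \in BV_{\text{loc}}(\Omega)$ and $\nabla\varphi$ is supported near the ``seams'' between regions; (4) verify the pointwise bound $N_*(u-\varphi)(x) \lesssim \eps M_\D(N_*u)(x)$, which follows region-by-region from the oscillation stopping condition; and (5) verify $\Cc_\D(\nabla\varphi)(x) \lesssim D_\eps M(M_\D(N_*u))(x)$ by estimating the total variation of $\varphi$ in each dyadic box using the packing estimate from step (2). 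The main obstacle I expect is step (2) together with the interface in step (5): making the stopping-time packing estimate genuinely \emph{pointwise} (rather than merely $L^p$ or $L^2$), so that one obtains a pointwise majorant by $M(M_\D(N_*u))$, requires carefully localizing the $S<N$ estimate to sawtooth subregions and tracking how the Carleson sums over stopping cubes convert into an averaged quantity dominated by the Hardy--Littlewood maximal function of $M_\D(N_*u)$; the non-connectedness of $\Omega$ and the need to handle Whitney regions that ``see'' many faces of $E$ is exactly the place where the UR hypothesis has to be used in full strength.
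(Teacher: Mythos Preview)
Your overall architecture is right and matches the paper: a stopping-time construction, Carleson packing via the local $S\lesssim N$ estimate, a piecewise definition of $\varphi$, and verification of the two pointwise bounds. Two points are worth sharpening.

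First, the paper's argument rests on the \emph{bilateral corona decomposition} of $\D(E)$ into a bad collection $\Bc$ (automatically Carleson) and a good collection $\Gc$ partitioned into coherent stopping-time regimes $\Sc$. You allude to ``corona/stopping-time decomposition'' but treat it as a single stopping pass from a top cube; in fact the paper runs \emph{two nested} stopping constructions. The outer one is the bilateral corona (an input from \cite{hofmannmartellmayboroda}), and only \emph{within} each regime $\Sc$ does one have that sawtooth regions over semicoherent subfamilies are chord-arc domains with UR boundary, which is what licenses applying the Carleson estimate $\iint |\nabla u|^2 \delta \lesssim \|u\|_{L^\infty}^2 \sigma(Q)$ locally. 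Without this intermediate structure the sawtooths need not be NTA and the $S\lesssim N$ input cannot be invoked; this is exactly the ``non-connectedness'' difficulty you flag at the end, and the bilateral corona is its resolution.

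Second, your stopping conditions and the form of $\varphi$ differ from the paper's. There is no stopping on ``square function exceeds $N_*u$''; instead the paper uses (i) the Hyt\"onen--Ros\'en \emph{principal cubes} $\Pc$, stopping when $M_\D(N_*u)(Q')>2M_\D(N_*u)(Q)$, which is precisely the device that converts $\|u\|_{L^\infty(\text{sawtooth})}$ into $M_\D(N_*u)(\pi_\Pc Q_0)$ and makes the packing genuinely pointwise, and (ii) within each $\Sc$, \emph{generation cubes} stopping when $|u(Y_Q^\pm)-u(Y_{Q^0}^\pm)|>\eps M_\D(N_*u)(Q)$. The approximant is not built with a smooth partition of unity: on the good part it is a sum of indicators $u(Y_{Q_k}^\pm)1_{A_k^\pm}$, and on ``red'' (large-oscillation) components one sets $\varphi=u$ itself, using Caccioppoli to control $\iint|\nabla u|$ there. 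Your sketch would likely still go through, but the paper's choices make the $BV$ bookkeeping (boundaries of $A_k^\pm$, $V_m$, upper ADR of $\partial T_Q$) tractable.
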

Although the $L^p$ version of $\eps$-approximability seems like the weakest one of all 
the properties, it is equivalent with the other properties in the codimension $1$ ADR context provided that $p$ is large enough. This follows from the recent results of S. 
Bortz and the second author \cite{bortztapiola}. Hence, combining our results with the 
results in \cite{hofmannmartellmayboroda}, \cite{garnettmourgogloutolsa} and \cite{bortztapiola} gives us the following characterization theorem:

\begin{theorem}
  Suppose that $E \subset \R^{n+1}$ is an $n$-dimensional ADR set and let $\Omega \coloneqq \R^{n+1} \setminus E$. The following conditions are equivalent:
  \begin{enumerate}
    \item[1)] $E$ is UR.

    \item[2)] Bounded harmonic functions in $\Omega$ are $\eps$-approximable for every $\eps \in (0,1)$.
    
    \item[3)] Harmonic functions in $\Omega$ are pointwise $\eps$-approximable for every $\eps \in (0,1)$.
    
    \item[4)] Harmonic functions in $\Omega$ are $\eps$-approximable in $L^p$ for some $p > n/(n-1)$ and every $\eps \in (0,1)$.
    
    \item[5)] Harmonic functions in $\Omega$ are $\eps$-approximable in $L^p$ for all $p \in (1,\infty)$ and every $\eps \in (0,1)$.
  \end{enumerate}

\end{theorem}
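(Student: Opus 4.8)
The plan is to prove the five statements equivalent via a cycle of implications, separating the substantive arrows --- which are exactly Theorems \ref{theorem:main_result} and \ref{thm:main_result_pointwise} of this paper together with characterization results already in the literature --- from a couple of soft implications that need only standard maximal function bounds.

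First I would deal with the soft implications. The step $5) \Rightarrow 4)$ is immediate, $n/(n-1)$ being a finite exponent larger than $1$, so that condition $4)$ is simply a special case of condition $5)$. For $3) \Rightarrow 5)$, I would fix $p \in (1,\infty)$, take an approximant $\varphi$ witnessing pointwise $\eps$-approximability with constant $D_\eps$, and pass to $L^p(E)$ norms in the two pointwise inequalities; the $L^p$-boundedness of $M$ and of its dyadic version $M_\D$ gives
\begin{align*}
  \Norm{N_*(u-\varphi)}{L^p(E)} &\lesssim \eps \Norm{M_\D(N_*u)}{L^p(E)} \lesssim \eps \Norm{N_*u}{L^p(E)}, \\
  \Norm{\Cc_\D(\nabla\varphi)}{L^p(E)} &\lesssim D_\eps \Norm{M(M_\D(N_*u))}{L^p(E)} \lesssim D_\eps \Norm{N_*u}{L^p(E)},
\end{align*}
and Lemma \ref{lemma:Lp-comparability_of_C} lets me replace $\Cc_\D$ by $\Cc$ in the second line. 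Since this holds for every $p \in (1,\infty)$ and the hypothesis in $3)$ is assumed for every $\eps \in (0,1)$, this is exactly condition $5)$.

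For the substantive arrows I would cite what has already been proved. Theorem \ref{thm:main_result_pointwise} is the implication $1) \Rightarrow 3)$, and Theorem \ref{theorem:main_result} is $1) \Rightarrow 5)$ (which also follows by composing $1) \Rightarrow 3) \Rightarrow 5)$). The implication $1) \Rightarrow 2)$ is the main theorem of \cite{hofmannmartellmayboroda}, and its converse $2) \Rightarrow 1)$ is the main theorem of \cite{garnettmourgogloutolsa}, to the effect that $\eps$-approximability of bounded harmonic functions on an $n$-ADR set forces uniform rectifiability. Finally, $4) \Rightarrow 1)$ is supplied by the results of Bortz and the second author \cite{bortztapiola}, and it is here that the threshold $p > n/(n-1)$ is essential: for such an exponent, $L^p$ $\eps$-approximability for every $\eps \in (0,1)$ already implies $\eps$-approximability of bounded harmonic functions, and hence, via \cite{garnettmourgogloutolsa}, uniform rectifiability. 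Assembling $1) \Rightarrow 2) \Rightarrow 1)$ together with $1) \Rightarrow 3) \Rightarrow 5) \Rightarrow 4) \Rightarrow 1)$ closes all the loops and gives the stated equivalences.

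I do not foresee a real obstacle in this assembly: the only arrow involving any computation is $3) \Rightarrow 5)$, and it collapses to the $L^p$-boundedness of the Hardy--Littlewood maximal operator, that of its dyadic version, and Lemma \ref{lemma:Lp-comparability_of_C}, with all the genuine weight resting in Theorems \ref{theorem:main_result} and \ref{thm:main_result_pointwise} and in the cited works. The one thing to watch is the bookkeeping of quantifiers: conditions $2)$--$5)$ each range over all $\eps \in (0,1)$, while $4)$ is existential in $p$ and $5)$ universal in $p$, so one should confirm that every cited implication --- the Bortz--Tapiola passage especially --- respects these patterns and consumes only a single admissible exponent $p$.
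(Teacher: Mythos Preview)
Your proposal is correct and matches the paper's own assembly of implications: the paper does not give a separate formal proof of this theorem, but presents it as a corollary obtained by combining Theorems \ref{theorem:main_result} and \ref{thm:main_result_pointwise} with \cite{hofmannmartellmayboroda}, \cite{garnettmourgogloutolsa}, and \cite{bortztapiola}, using exactly the $L^p$-boundedness of $M$ and $M_\D$ together with Lemma \ref{lemma:Lp-comparability_of_C} for the step $3) \Rightarrow 5)$. Your chain $1) \Leftrightarrow 2)$ and $1) \Rightarrow 3) \Rightarrow 5) \Rightarrow 4) \Rightarrow 1)$ is precisely the cycle the paper has in mind.
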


To prove the implication 1) $\Rightarrow$ 3), we combine some techniques of the proof of the Hyt\"onen-Ros\'en theorem with the tools and techniques 
from \cite{hofmannmartellmayboroda}. Some of the techniques can be used in a straightforward way but with the rest of them 
we have take care of many technicalities and be careful with the details.

We start by recalling the basic definitions and some results needed in our statements and proofs. For the most part, 
our notation and terminology agrees with \cite{hofmannmartellmayboroda}.

\subsection{Notation}
\label{section:notation}

We use the following notation.
\begin{enumerate}
  \item[$\bullet$] The set $E \subset \R^{n+1}$ will always be a closed set of Hausdorff dimension $n$. 
                   We denote $\Omega \coloneqq \R^{n+1} \setminus E$.
  
  \item[$\bullet$] The letters $c$ and $C$ denote constants that depend only on the dimension, the ADR constant (see Definition 
                   \ref{defin:adr}), the UR constants (see Definition \ref{defin:ur}) and other similar parameters. We call them
                   \emph{structural constants}. The values of $c$ and $C$ may change from one occurence to another. We do not track 
                   how our bounds depend on these constants and usually just write $\lambda_1 \lesssim \lambda_2$ if $\lambda_1 \le c\lambda_2$ for a structural constant
                   $c$ and $\lambda_1 \approx \lambda_2$ if $\lambda_1 \lesssim \lambda_2 \lesssim \lambda_1$.
                   
  \item[$\bullet$] We use capital letters $X,Y,Z$, and so on to denote points in $\Omega$
                   and lowecase letters $x,y,z$, and so on to denote points in $E$.
                   
  \item[$\bullet$] The $(n+1)$-dimensional Euclidean open ball of radius $r$ will be denoted $B(x,r)$ or $B(X,r)$ depending 
                   on whether the center point lies on $E$ or $\Omega$. We denote the surface ball of radius $r$ centered
                   at $x$ by $\Delta(x,r) \coloneqq B(x,r) \cap E$.
                   
  \item[$\bullet$] Given a Euclidean ball $B \coloneqq B(X,r)$ or a surface ball $\Delta \coloneqq \Delta(x,r)$ and constant 
                   $\kappa > 0$, we denote $\kappa B \coloneqq B(X,\kappa r)$ and $\kappa \Delta \coloneqq \Delta(x, \kappa r)$.
                   
  \item[$\bullet$] For every $X \in \Omega$ we set $\delta(X) \coloneqq \dist(X,E)$.
  
  \item[$\bullet$] We let $\Hc^n$ be the $n$-dimensional Hausdorff measure and denote $\sigma \coloneqq \Hc^n|_E$. The $(n+1)$-dimensional 
                   Lebesgue measure of a measurable set $A \subset \Omega$ will be denoted by $|A|$.
  
  \item[$\bullet$] For a set $A \subset \R^{n+1}$, we let $1_A$ be the indicator function of $A$: $1_A(x) = 0$ if $x \notin A$
                   and $1_A(x) = 1$ if $x \in A$.
                   
  \item[$\bullet$] The interior of a set $A$ will be denoted by $\text{int}(A)$. The closure of a set $A$ will be denoted by $\overline{A}$.
  
  \item[$\bullet$] For $\mu$-measurable sets $A$ with positive and finite measure we set $\fint_A f \, d\mu \coloneqq \tfrac{1}{\mu(A)} f \, d\mu$.
  
  \item[$\bullet$] The Hardy-Littlewood maximal operator and its dyadic version (see Section \ref{section:dyadic_cubes}) in $E$ will be denoted $M$ and $M_\D$, respectively:
                   \begin{align*}
                     Mf(x) &\coloneqq \sup_{\Delta(y,r) \ni x} \fint_{\Delta(y,r)} |f(z)| \, d\sigma(z),\\
                     M_\D f(x) &\coloneqq \sup_{Q \in \D, Q \ni x} \fint_Q |f(z)| \, d\sigma(z).
                   \end{align*}

\end{enumerate}

\subsection{ADR, UR and NTA sets}

\begin{defin}
  \label{defin:adr}
  We say that a closed set $E \subset \R^{n+1}$ is an \emph{$n$-ADR} (Ahlfors-David regular) set if there exists 
  a uniform constant $C$ such that
  \begin{align*}
    \frac{1}{C} r^n \le \sigma(\Delta(x,r)) \le C r^n
  \end{align*}
  for every $x \in E$ and every $r \in (0,\diam(E))$, where $\diam(E)$ may be infinite.
\end{defin}

\begin{defin}
  \label{defin:ur}
  Following \cite{davidsemmes_singular, davidsemmes_analysis}, we say that an $n$-ADR set $E \subset \R^{n+1}$ is \emph{UR} (uniformly rectifiable) if it contains ``big pieces of Lipschitz images'' (BPLI) of $\R^n$:
  there exist constants $\theta, \Lambda > 0$ such that for every $x \in E$ and $r \in (0,\diam(E))$ there is a Lipschitz mapping
  $\rho = \rho_{x,r} \colon \R^n \to \R^{n+1}$, with Lipschitz norm no larger that $\Lambda$, such that
  \begin{align*}
    \Hc^n(E \cap B(x,r) \cap \rho(\{y \in \R^n \colon |y| < r\})) \ge \theta r^n.
  \end{align*}
\end{defin}

\begin{defin}
  Following \cite{jerisonkenig}, we say that a domain $\Omega \subset \R^{n+1}$ is \emph{NTA} (nontangentially accessible) if
  \begin{enumerate}
    \item[$\bullet$] $\Omega$ satisfies the \emph{Harnack chain condition}: there exists a uniform constant $C$ 
                     such that for every $\rho > 0$, $\Lambda \ge 1$ and $X,X' \in \Omega$ with $\delta(X), \delta(X') \ge \rho$
                     and $|X - X'| < \Lambda \rho$ there exists a chain of open balls $B_1, \ldots, B_N \subset \Omega$, $N \le C(\Lambda)$,
                     with $X \in B_1$, $X' \in B_N$, $B_k \cap B_{k+1} \neq \emptyset$ and $C^{-1} \diam(B_k) \le \dist(B_k,\partial \Omega) \le C \diam(B_k)$,
    
    \item[$\bullet$] $\Omega$ satisfies the \emph{corkscrew condition}:
                     there exists a uniform constant $c$ such that for every surface ball $\Delta \coloneqq \Delta(x,r)$ 
                     with $x \in \partial\Omega$ and $0 < r < \diam(\partial \Omega)$ there exists a point 
                     $X_\Delta \in \Omega$ such that $B(X_\Delta, cr) \subset B(x,r) \cap \Omega$,
                     
    \item[$\bullet$] $\R^{n+1} \setminus \overline{\Omega}$ satisfies the corkscrew condition.
  \end{enumerate}
\end{defin}

\subsection{Dyadic cubes; Carleson and sparse collections}
\label{section:dyadic_cubes}

\begin{theorem}[E.g. {\cite{christ, sawyerwheeden, hytonenkairema}}]
  \label{thm:dyadic_cubes}
  Suppose that $E$ is an ADR set. Then there exists a countable collection $\D$,
  \begin{align*}
    \D \coloneqq \bigcup_{k \in \Z} \D_k, \ \ \ \ \ \D_k \coloneqq \{ Q_\alpha^k \colon \alpha \in \mathcal{A}_k \}
  \end{align*}
  of Borel sets (that we call dyadic cubes) such that
  \begin{enumerate}
    \item[$\bullet$] the collection $\D$ is \emph{nested}: $\text{if } Q,P \in \D, \text{ then } Q \cap P \in \{\emptyset,Q,P\}$,
    \item[$\bullet$] $E = \bigcup_{Q \in \D_k} Q$ for every $k \in \Z$ and the union is disjoint,
    \item[$\bullet$] there exist constants $c_1 > 0$ and $C_1 \ge 1$ with the following property: for any cube $Q_\alpha^k$ there exists a point $z_\alpha^k \in Q_\alpha^k$ (that we call the \emph{center point of $Q_\alpha^k$}) such that
                     \begin{align}
                       \label{dyadic_cubes_balls_inclusion} \Delta(z_\alpha^k,c_1 2^{-k}) \subseteq Q_\alpha^k \subseteq \Delta(z_\alpha^k, C_1 2^{-k}) \eqqcolon \Delta_{Q_\alpha^k},
                     \end{align}
    \item[$\bullet$] if $Q,P \in \D$ and $Q \subseteq P$, then
                     \begin{align}
                       \label{dyadic_cubes_big_balls_inclusion} \Delta_Q \subseteq \Delta_P,
                     \end{align}
    \item[$\bullet$] for every cube $Q_\alpha^k$ there exists a uniformly bounded number of disjoint cubes $Q_{\beta_i}^{k+1}$ such that $Q_\alpha^k = \bigcup_i Q_{\beta_i}^{k+1}$, where the uniform bound depends only on the ADR constant of $E$,
    \item[$\bullet$] the cubes form a connected tree under inclusion: if $Q, P \in \D$, then there exists a cube $R \in \D$ such that $Q \cup P \subseteq R$.
  \end{enumerate}
\end{theorem}

\begin{remark}
  The last property in the previous theorem does not appear in the constructions in \cite{christ, sawyerwheeden, hytonenkairema}, but it is 
  easy to modify the construction to get this property. The basic idea in the construction in \cite{hytonenkairema} is to choose first the center points 
  $z_\alpha^k$, then define a partial order among those points and finally build the cubes by using density arguments. 
  Thus, if we simply choose the center points $z_\alpha^k$ in such a way that there exists a point $z_0 \in \bigcap_{k \in \Z} \{z_\alpha^k\}_\alpha$, then
  by \eqref{dyadic_cubes_balls_inclusion} for any $r > 0$ there exists a cube $Q_r$ that contains the ball $B(z_0,r)$. This implies the last property in the previous theorem.
\end{remark}

\begin{notation}
  \label{notation:dyadic_cubes}
  \begin{enumerate}
    \item[1)] Since the set $E$ may be bounded or disconnected, we may encounter a situation where $Q_\alpha^k = Q_\beta^l$ although $k \neq l$. In particular, in the second to last property of Theorem \ref{thm:dyadic_cubes} there might exist only one cube $Q_{\beta_i}^{k+1}$ which equals $Q_\alpha^k$ as a set. 
              Thus, we use the notation $\D(E)$ for the collection of all relevant cubes $Q \in \D$, i.e. if $Q_\alpha^k \in \D(E)$, then 
              $C_1 2^{-k} \lesssim \diam(E)$ and the number $k$ is maximal in the sense that there does not exist a cube $Q_\beta^l \in \D$ such that 
              $Q_\beta^l = Q_\alpha^k$ for some $l > k$. Notice that the number $k$ is bounded for each cube since the ADR condition excludes the 
              presence of isolated points in $E$. This way in $\D(E)$ it is natural to talk about the children of a cube $Q$ (i.e. the largest cubes $P \subsetneq Q$) and the parent of a cube $Q$ (i.e. the smallest cube $R \supsetneq Q$).
              
    \item[2)] For every cube $Q_\alpha^k \coloneqq Q \in \D$, we denote $\ell(Q) \coloneqq 2^{-k}$ and $z_Q \coloneqq z_\alpha^k$. We call
              $\ell(Q)$ the \emph{side length} of $Q$.
              
    \item[3)] For every $Q \in \D$, we denote the collection of dyadic subcubes of $Q$ by $\D_Q$.
  \end{enumerate}
\end{notation}

\begin{defin}
  Suppose that $\Lambda \ge 1$. We say that a collection $\Ac \subset \D$ is \emph{$\Lambda$-Carleson} (or that it satisfies a \emph{Carleson packing condition}) if 
  \begin{align*}
    \sum_{Q \in \Ac, Q \subset Q_0} \sigma(Q) \le \Lambda \sigma(Q_0)
  \end{align*}
  for every cube $Q_0 \in \D$.
\end{defin}

\begin{defin}
  Suppose that $\lambda \in (0,1)$. We say that a collection $\Ac \subset \D$ is \emph{$\lambda$-sparse} if for every cube $Q \in \Ac$ 
  there exists a subset $E_Q \subset Q$ satisfying
  \begin{enumerate}
    \item[1)] $E_Q \cap E_{Q'} = \emptyset$ if $Q \neq Q'$ and
    \item[2)] $\sigma(E_Q) \ge \lambda \sigma(Q)$.
  \end{enumerate}
\end{defin}

The following result will be useful for us with some technical estimates.

\begin{theorem}
  \label{thm:sparse_carleson}
  A collection $\Ac \subset \D$ is $\Lambda$-Carleson if and only if it is $\tfrac{1}{\Lambda}$-sparse.
\end{theorem}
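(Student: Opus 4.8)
The plan is to prove both implications of the equivalence between the $\Lambda$-Carleson and $\tfrac{1}{\Lambda}$-sparse conditions for a collection $\Ac \subset \D$, and the argument is elementary once one has the dyadic structure of Theorem~\ref{thm:dyadic_cubes} in hand. The harder direction is ``$\Lambda$-Carleson $\Rightarrow$ $\tfrac{1}{\Lambda}$-sparse'', so I would treat it first.

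For the direction \emph{$\Lambda$-Carleson implies $\tfrac{1}{\Lambda}$-sparse}, I would fix $Q \in \Ac$ and build the set $E_Q$ by removing from $Q$ everything that is ``claimed'' by smaller cubes of $\Ac$. Precisely, define $E_Q \coloneqq Q \setminus \bigcup \{ Q' \in \Ac : Q' \subsetneq Q \}$. The pairwise disjointness condition 1) is then automatic from the nestedness of $\D$: if $Q \neq Q'$ are both in $\Ac$, then either they are disjoint (and so are $E_Q, E_{Q'}$), or one is strictly contained in the other, say $Q' \subsetneq Q$, in which case $E_Q$ was defined to avoid $Q'$ entirely and hence avoids $E_{Q'} \subset Q'$. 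For the measure lower bound 2), I would use a maximality/stopping-time reduction: let $\{Q_j\}$ be the maximal cubes of $\Ac$ that are strictly contained in $Q$ (these exist and are pairwise disjoint by nestedness), so that $Q \setminus E_Q = \bigcup_j Q_j$ and therefore $\sigma(Q \setminus E_Q) = \sum_j \sigma(Q_j)$. Applying the Carleson packing condition to $Q_0 = Q$ and noting that $\{Q_j\}$ is a \emph{subcollection} of $\{Q' \in \Ac : Q' \subset Q\}$ but with the $Q_j$ disjoint, one wants $\sum_j \sigma(Q_j) \le (\Lambda - 1)\sigma(Q)$; the subtlety is that the packing condition as stated includes $Q$ itself in the sum, so $\sum_{Q' \in \Ac, Q' \subset Q} \sigma(Q') \le \Lambda \sigma(Q)$ gives $\sigma(Q) + \sum_{\text{others}} \sigma(Q') \le \Lambda\sigma(Q)$, hence the contribution of the proper subcubes is at most $(\Lambda-1)\sigma(Q)$. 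This is not quite enough to reach $\sigma(E_Q) \ge \tfrac{1}{\Lambda}\sigma(Q)$ directly, so the right move is to iterate: partition the maximal proper subcubes $\{Q_j\}$ and observe that each $Q_j \in \Ac$ again satisfies the packing condition, which lets one run a geometric-series argument, or more cleanly, sum $\sigma(Q_j)$ over \emph{all} generations and use that each cube of $\Ac$ inside $Q$ is counted with the full Carleson bound. I expect the cleanest route is: let $\mathcal{A}_Q \coloneqq \{Q' \in \Ac : Q' \subseteq Q\}$ and use that $\sum_{Q' \in \mathcal{A}_Q} \sigma(Q') \le \Lambda \sigma(Q)$ together with the fact that the function $x \mapsto \#\{Q' \in \mathcal{A}_Q : x \in Q'\}$ is at least $1$ on $Q \setminus E_Q$ with multiplicity controlled, but actually the slickest argument just takes $E_Q$ as above and proves $\sigma(E_Q)\ge\frac1\Lambda\sigma(Q)$ by contradiction: if $\sigma(Q\setminus E_Q) > (1-\tfrac1\Lambda)\sigma(Q)$, then since $Q\setminus E_Q=\bigcup Q_j$ with $Q_j$ maximal proper subcubes in $\Ac$, summing the packing bounds $\sum_{Q'\in\Ac, Q'\subseteq Q_j}\sigma(Q')\le\Lambda\sigma(Q_j)$ over $j$ and adding $\sigma(Q)$ itself yields $\sum_{Q'\in\Ac,Q'\subseteq Q}\sigma(Q')\ge \sigma(Q)+\sum_j\sigma(Q_j) > \sigma(Q)+(1-\tfrac1\Lambda)\Lambda\sigma(Q)=\Lambda\sigma(Q)$ only if... — the bookkeeping here is the main obstacle and needs to be done carefully, matching the standard Lerner–Nazarov-type argument.

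For the converse direction \emph{$\tfrac{1}{\Lambda}$-sparse implies $\Lambda$-Carleson}, the argument is short. Fix $Q_0 \in \D$ and let $\{Q\}$ range over the cubes of $\Ac$ with $Q \subseteq Q_0$. To each such $Q$ associate its sparse set $E_Q \subseteq Q \subseteq Q_0$; by property 1) the sets $\{E_Q\}$ are pairwise disjoint subsets of $Q_0$, so $\sum_{Q \in \Ac, Q \subseteq Q_0} \sigma(E_Q) \le \sigma(Q_0)$. By property 2), $\sigma(E_Q) \ge \tfrac1\Lambda \sigma(Q)$, hence $\tfrac1\Lambda \sum_{Q \in \Ac, Q \subseteq Q_0} \sigma(Q) \le \sum_{Q \in \Ac, Q \subseteq Q_0} \sigma(E_Q) \le \sigma(Q_0)$, which is exactly the $\Lambda$-Carleson packing inequality. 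One small point to address is the distinction between $Q \subset Q_0$ and $Q \subseteq Q_0$ in the packing definition; since including or excluding $Q_0$ itself changes the constant by at most $\sigma(Q_0)$, and in any case the definitions in the excerpt use $Q \subset Q_0$, I would simply match the stated convention and note the discrepancy is harmless (or absorb it, since $\tfrac1\Lambda \le 1$). I would also remark that, as is standard, these two constants are genuinely the sharp reciprocals of one another, which is why the statement is an exact ``if and only if'' rather than an equivalence up to constants.

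The main obstacle, as indicated, is the measure-counting in the forward direction: one must be careful that the Carleson packing sum as written includes the top cube, and that the maximal proper subcubes of $\Ac$ inside a given $Q$ are exactly what get removed to form $E_Q$, so that a clean telescoping or contradiction argument closes the estimate with the sharp constant $\tfrac1\Lambda$. Everything else — disjointness, the reverse implication — is immediate from nestedness of $\D$ and the definitions.
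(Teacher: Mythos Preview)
The paper does not prove this theorem: immediately after stating it, the authors write that the easy direction (sparse $\Rightarrow$ Carleson) is straightforward but that the converse ``is not obvious'', and they cite Verbitsky, Lerner--Nazarov, and H\"anninen (the last for general Borel sets) rather than give an argument. So there is no in-paper proof to compare against; the paper treats the result as a black box.

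Your reverse direction (sparse $\Rightarrow$ Carleson) is fine and is exactly the easy argument the paper alludes to.

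Your forward direction, however, has a genuine gap. The naive choice
\[
  E_Q \coloneqq Q \setminus \bigcup\{Q' \in \Ac : Q' \subsetneq Q\}
\]
does \emph{not} give $\sigma(E_Q) \ge \tfrac{1}{\Lambda}\sigma(Q)$ in general. A one-line counterexample: on $[0,1]$ with Lebesgue measure, take $\Ac = \{[0,1],[0,\tfrac12],[\tfrac12,1]\}$. Then $\Lambda = 2$, but your $E_{[0,1]}$ is empty. The collection \emph{is} $\tfrac12$-sparse (e.g.\ $E_{[0,1]}=[\tfrac14,\tfrac34]$, $E_{[0,1/2]}=[0,\tfrac14]$, $E_{[1/2,1]}=[\tfrac34,1]$), but the sparse sets must be allowed to sit \emph{inside} smaller cubes of $\Ac$; they cannot be forced to avoid them. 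Your contradiction attempt reflects this: from $\sum_j \sigma(Q_j) > (1-\tfrac{1}{\Lambda})\sigma(Q)$ and the trivial lower bound you only get
\[
  \sum_{Q'\in\Ac,\,Q'\subseteq Q}\sigma(Q') \ge \sigma(Q)+\sum_j\sigma(Q_j) > \Big(2-\tfrac{1}{\Lambda}\Big)\sigma(Q),
\]
which is not $>\Lambda\sigma(Q)$ for $\Lambda>1$; the extra factor of $\Lambda$ you inserted on the right-hand side is unjustified. You recognize this yourself (``the bookkeeping here is the main obstacle \ldots matching the standard Lerner--Nazarov-type argument''), but you never supply that argument. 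The actual proofs (Verbitsky via a duality/weight argument, Lerner--Nazarov via an inductive redistribution, H\"anninen via an augmenting-paths/Hall-type argument for general Borel sets) all construct the $E_Q$ globally rather than cube-by-cube, and this global construction is precisely the nontrivial content that your proposal is missing.
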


Although it is very easy to show that sparseness implies the Carleson property, the other implication is not obvious. For dyadic cubes in $\R^n$, it was first proven by I. Verbitsky \cite[Corollary 2]{verbitsky} and the result was later rediscovered by A. Lerner and F. Nazarov with a different proof \cite[Lemma 6.3]{lernernazarov}. For general Borel sets, the result was proven by T. H\"anninen \cite[Theorem 1.3]{hanninen}. Since the dyadic cubes in Theorem \ref{thm:dyadic_cubes} are Borel sets, the result of H\"anninen is suitable for us.

In addition to sparseness arguments, we use a discrete Carleson embedding theorem (Theorem \ref{theorem:carleson_embedding}) to prove that local bounds imply global bounds. In fact, we could use the embedding theorem instead of sparseness arguments throughout the paper but this would give us slightly weaker estimates.

\begin{defin}
  Let $\Ac \subset \D$ be any collection of dyadic cubes. We say that a cube $P \in \Ac$ is an \emph{$\Ac$-maximal subcube of $Q_0$} 
  if there do not exist any cubes $P' \in \Ac$ such that $P \subsetneq P' \subset Q_0$.
\end{defin}

\subsection{Corona decomposition, Whitney regions and Carleson boxes}
\label{subsection:whitney_regions}

\begin{defin}
  We say that a subcollection $\Sc \subset \D(E)$ is \emph{coherent} if the following three conditions hold.
  \begin{enumerate}
    \item[(a)] There exists a maximal element $Q(\Sc) \in \Sc$ such that $Q \subset \Sc$ for every $Q \in \Sc$.
    \item[(b)] If $Q \in \Sc$ and $P \in \D(E)$ is a cube such that $Q \subset P \subset Q(\Sc)$, then also $P \in \Sc$.
    \item[(c)] If $Q \in \Sc$, then either all children of $Q$ belong to $\Sc$ or none of them do.
  \end{enumerate}
  If $\Sc$ satisfies only conditions (a) and (b), then we say that $\Sc$ is \emph{semicoherent}.
\end{defin}
In this article, we do not work directly with Definition \ref{defin:ur} but use the \emph{bilateral corona decomposition} 
instead:

\begin{lemma}[{\cite[Lemma 2.2]{hofmannmartellmayboroda}}]
  Suppose that $E \subset \R^{n+1}$ is a uniformly rectifiable set of codimension $1$. Then for any pair of positive 
  constants $\eta \ll 1$ and $K \gg 1$ there exists a disjoint decomposition $\D(E) = \Gc \cup \Bc$ satisfying 
  the following properties:
  \begin{enumerate}
    \item[(1)] The ``good'' collection $\Gc$ is a disjoint union of coherent stopping time regimes $\Sc$.
    \item[(2)] The ``bad'' collection $\Bc$ and the maximal cubes $Q(\Sc)$ satisfy a Carleson packing condition:
               for every $Q \in \D(E)$ we have
               \begin{align*}
                 \sum_{Q' \subset Q, Q' \in \Bc} \sigma(Q') + \sum_{\Sc: Q(\Sc) \subset Q} \sigma(Q(\Sc)) \le C_{\eta, K} \sigma(Q).
               \end{align*}
    \item[(3)] For every $\Sc$, there exists a Lipschitz graph $\Gamma_\Sc$, with Lipschitz constant at most $\eta$, such that
               for every $Q \in \Sc$ we have
               \begin{align*}
                 \sup_{x \in \Delta_Q^*} \dist(x,\Gamma_\Sc) + \sup_{y \in B_Q^* \cap \Gamma_\Sc} \dist(y,E) < \eta \ell(Q),
               \end{align*}
               where $B_Q^* \coloneqq B(z_Q,K\ell(Q))$ and $\Delta_Q^* \coloneqq B_Q^* \cap E$.
  \end{enumerate}
\end{lemma}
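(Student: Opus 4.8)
The plan is to deduce the statement from two of the classical characterizations of uniform rectifiability due to David and Semmes \cite{davidsemmes_singular, davidsemmes_analysis}: the existence of a \emph{corona decomposition by Lipschitz graphs}, and the \emph{bilateral weak geometric lemma} (BWGL). The corona decomposition produces everything except the second half of property (3) — the control of $\dist(y,E)$ for $y$ on the approximating graph — and the BWGL is exactly what is needed to upgrade the one-sided approximation to a bilateral one.

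\emph{Step 1: the one-sided decomposition.} Since $E$ is UR, by \cite{davidsemmes_analysis} it admits a corona decomposition: for the given $\eta \ll 1$ (after replacing $\eta$ by a fixed fraction of itself) and $K \gg 1$ there is a disjoint partition $\D(E) = \Gc_0 \cup \Bc_0$ with $\Gc_0$ a disjoint union of coherent stopping-time regimes $\Sc$, each equipped with a Lipschitz graph $\Gamma_\Sc$ of constant at most $\eta$ such that $\dist(x,\Gamma_\Sc) < \tfrac{\eta}{2} \ell(Q)$ for every $Q \in \Sc$ and every $x \in \Delta_Q^*$, while $\Bc_0$ together with the tops $\{Q(\Sc)\}$ satisfies a Carleson packing condition. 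This yields (1), (2) and the first summand in (3); it remains to force the second summand.

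\emph{Step 2: the bilateral upgrade.} By the BWGL \cite{davidsemmes_analysis}, for every $\gamma > 0$ the collection $\Bc_1(\gamma)$ of cubes $Q$ for which $E \cap B(z_Q, 2K\ell(Q))$ is \emph{not} within Hausdorff distance $2K\gamma\,\ell(Q)$ of some $n$-plane $P_Q$ is a Carleson family. Enlarge the bad set to $\Bc \coloneqq \Bc_0 \cup \Bc_1(\gamma)$ and subdivide each regime $\Sc$ at its $\Bc_1(\gamma)$-maximal subcubes, recording those as new tops. For a cube $Q$ in the refined regime, $\Gamma_\Sc$ is $\tfrac{\eta}{2}\ell(Q)$-close to $E$ from one side and $E$ is $2K\gamma\,\ell(Q)$-close to $P_Q$ from both sides on $B(z_Q, 2K\ell(Q))$; a short planar-geometry argument — choosing $n+1$ affinely independent points of $E$ spread at scale $\ell(Q)$ near $Q$ (possible by ADR), noting each is close to both $\Gamma_\Sc$ and $P_Q$, and using that the small Lipschitz constant makes $\Gamma_\Sc$ nearly affine — shows $\Gamma_\Sc$ and $P_Q$ stay within $\lesssim (\gamma K + \eta)\ell(Q)$ of each other on all of $B_Q^*$, whence $\dist(y,E) \lesssim (\gamma K + \eta)\ell(Q)$ for $y \in B_Q^* \cap \Gamma_\Sc$. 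Choosing $\gamma$ small compared with $\eta/K$ makes this $< \tfrac{\eta}{2}\ell(Q)$, which together with Step 1 gives the full estimate in (3).

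\emph{Step 3: bookkeeping and the main obstacle.} A sum of two Carleson families is Carleson and the maximal subcubes of a Carleson family are Carleson, so $\Bc$ together with the old and new tops still satisfies (2) with a larger constant $C_{\eta,K}$; the refined regimes inherit (a) and (b) of coherence from the stopping construction, and (c) (all-or-none children) is restored by one more routine pass that only adds a Carleson family of tops, giving (1) for $\Gc \coloneqq \D(E) \setminus \Bc$. The real content is Step 2: if one insists on arguing from Definition \ref{defin:ur} alone, then the implication UR $\Rightarrow$ BWGL is itself a deep theorem of David and Semmes and is the heart of the matter; granting it, the delicate points are the order in which parameters are chosen ($\gamma \ll \eta/K$, the auxiliary radius a fixed multiple of $K\ell(Q)$) and the fact that (3) is required on the inflated ball $B_Q^* = B(z_Q, K\ell(Q))$ rather than merely near $Q$, which is precisely why the comparison of $\Gamma_\Sc$ with $P_Q$ must be carried out on all of $B_Q^*$ and why the BWGL must be invoked at the correspondingly larger scale $2K\ell(Q)$.
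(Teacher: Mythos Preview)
Your proposal is correct and follows precisely the approach that the paper attributes to the original proof: the paper does not prove this lemma but cites it from \cite{hofmannmartellmayboroda} and remarks that ``the proof of this decomposition is based on the use of both the unilateral corona decomposition \cite{davidsemmes_singular} and the bilateral weak geometric lemma \cite{davidsemmes_analysis} of David and Semmes.'' Your Steps 1--3 implement exactly this strategy, and your discussion of the parameter hierarchy ($\gamma \ll \eta/K$, invoking the BWGL at scale $2K\ell(Q)$ so the comparison extends over all of $B_Q^*$) correctly isolates the places where care is required.
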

The proof of this decomposition is based on the use of both the unilateral corona decomposition \cite{davidsemmes_singular}
and the bilateral weak geometric lemma \cite{davidsemmes_analysis} of David and Semmes. The decomposition plays a key role in this paper.

In \cite[Section 3]{hofmannmartellmayboroda}, the bilateral corona decomposition is used to construct Whitney regions $U_Q$ 
and Carleson boxes $T_Q$ with respect to the dyadic cubes $Q \in \D(E)$ using a dyadic Whitney decomposition of $\R^{n+1} \setminus E$. 
The Whitney regions are a substitute for the dyadic Whitney tiles $Q \times (\ell(Q)/2,\ell(Q))$ and the Carleson boxes are a substitute for the dyadic boxes 
$Q \times (0,\ell(Q))$ in $\R^{n+1}_+$. We list some of their important properties in the next lemma which we use constantly 
without specifically referring to it each time.
\begin{lemma}
  The Whitney regions $U_Q$, $Q \in \D(E)$, satisfy the following properties.
  \begin{enumerate}
    \item[$\bullet$] The region $U_Q$ is a union of a bounded number of slightly fattened Whitney cubes $I^* \coloneqq (1+\tau)I$
                     such that $\ell(Q) \approx \ell(I)$ and $\dist(Q,I) \approx \ell(Q)$. We denote the collection of these 
                     Whitney cubes by $\Wc_Q$.
    \item[$\bullet$] The regions $U_Q$ have a bounded overlap property. In particular, we have $\sum_i |U_{Q_i}| \lesssim |\bigcup_i U_{Q_i}|$ 
                     for cubes $Q_i$ such that $Q_i \neq Q_j$ if $i \neq j$.
    \item[$\bullet$] If $U_Q \cap U_P \neq \emptyset$, then $\ell(Q) \approx \ell(P)$ and $\dist(Q,P) \lesssim \ell(Q)$.
    \item[$\bullet$] For every $Y \in U_Q$ we have $\delta(Y) \approx \ell(Q)$.
    \item[$\bullet$] For every $Q \in \D(E)$, we have $|U_Q| \approx \ell(Q)^{n+1} \approx \ell(Q) \cdot \sigma(Q)$.
    \item[$\bullet$] If $Q \in \Gc$, then $U_Q$ breaks into exactly two connected components $U_Q^+$ and $U_Q^-$
                     such that $|U_Q^+| \approx |U_Q^-|$.
    \item[$\bullet$] If $Q \in \Bc$, then $U_Q$ breaks into a bounded number of connected components $U_Q^i$ such that
                     $|U_Q^i| \approx |U_Q^j|$ for all $i$ and $j$.
    \item[$\bullet$] If $\text{diam}(E) = \infty$, then $\bigcup_{Q \in \D(E)} U_Q = \Omega$.
    \item[$\bullet$] If $\text{diam}(E) < \infty$, then there exists a point $z_0 \in E$ and a constant $C \ge 1$ such that $B(z_0,C\cdot\text{diam}(E)) \setminus E \subset \bigcup_{Q \in \D(E)} U_Q$. The constant $C$ can be made large but this makes the implicit constant in the bounded overlap property large as well.
  \end{enumerate}
\end{lemma}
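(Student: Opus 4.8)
The plan is to reproduce, in outline, the construction of \cite{hofmannmartellmayboroda}, which builds the regions $U_Q$ from a dyadic Whitney decomposition of $\Omega$. First I would fix a Whitney decomposition $\Wc$ of the open set $\Omega = \R^{n+1}\setminus E$ into dyadic cubes $I$ with pairwise disjoint interiors, $\bigcup_{I\in\Wc} I = \Omega$, and $4\,\diam(I)\le\dist(I,E)\le 40\,\diam(I)$; for a small parameter $\tau>0$ set $I^*:=(1+\tau)I$, so that for $\tau$ small the cubes $I^*$ retain bounded overlap while $I^*\cap J^*\ne\emptyset$ only when $I$ and $J$ are adjacent. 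For each $Q\in\D(E)$ let $\Wc_Q$ consist of those $I\in\Wc$ with $\ell(I)\approx\ell(Q)$ and $\dist(I,Q)\lesssim\ell(Q)$, with the implicit constants tuned so that $\Wc_Q\ne\emptyset$ and so that the chaining in the last step below can be carried out, and set $U_Q:=\bigcup_{I\in\Wc_Q}I^*$.

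With this in hand the first several bullets are bookkeeping. If $Y\in I^*$ then $\delta(Y)\approx\ell(I)$ by the Whitney property, and $I\in\Wc_Q$ forces $\ell(I)\approx\ell(Q)$, whence $\delta(Y)\approx\ell(Q)$ and $\dist(Q,I)\approx\ell(Q)$. The cardinality of $\Wc_Q$ is bounded by a structural constant, since there are only boundedly many Whitney cubes of a fixed scale within a fixed multiple of that scale of $Q$; hence $|U_Q|\approx\ell(Q)^{n+1}$, and then $|U_Q|\approx\ell(Q)\,\sigma(Q)$ by ADR. For the bounded overlap of the family $\{U_Q\}$: a given $I$ lies in $\Wc_Q$ only for cubes $Q$ with $\ell(Q)\approx\ell(I)$ and $\dist(Q,I)\lesssim\ell(Q)$, and there are only boundedly many such $Q$ since they are disjoint and of comparable side length; summing over a fixed cube yields $\sum_i|U_{Q_i}|\lesssim|\bigcup_iU_{Q_i}|$. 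The same comparability shows $U_Q\cap U_P\ne\emptyset$ forces $\ell(Q)\approx\ell(P)$ and $\dist(Q,P)\lesssim\ell(Q)$. Finally, every $Y\in\Omega$ lies in some $I\in\Wc$, and if $\delta(Y)\lesssim\diam(E)$ then $I\in\Wc_Q$ for a suitable $Q\in\D(E)$ with $\ell(Q)\approx\ell(I)$; this is automatic when $\diam(E)=\infty$, and otherwise gives the inclusion $B(z_0,C\,\diam(E))\setminus E\subset\bigcup_{Q\in\D(E)}U_Q$, with $C$ degrading the overlap constant as it grows.

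The real content is the structure of the connected components of $U_Q$. For $Q\in\Gc$, part~(3) of the bilateral corona decomposition provides a Lipschitz graph $\Gamma_\Sc$, with constant at most $\eta$, that $\eta\ell(Q)$-approximates $E$ inside $B_Q^*=B(z_Q,K\ell(Q))$ in the two-sided sense. Taking $\eta$ small and $K$ large traps $\Omega$ near $Q$ between the two sides of a thin slab about $\Gamma_\Sc$, so each $I\in\Wc_Q$ lies strictly above or strictly below $\Gamma_\Sc$; I would let $U_Q^{\pm}$ be the union of the corresponding $I^*$. Each $U_Q^{\pm}$ is connected because two Whitney cubes on the same side, of scale $\approx\ell(Q)$ and within $\approx\ell(Q)$ of $Q$, can be joined by a bounded Harnack-type chain of such cubes staying on that side — the region on one side of an $\eta$-Lipschitz graph is an NTA domain and the chaining happens at a single scale — and by enlarging the constants defining $\Wc_Q$ (adding a few connector cubes) one arranges that this chain lies in $U_Q$. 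The two sides are separated by the slab, so there are exactly two components, and $|U_Q^+|\approx|U_Q^-|$ because the graph runs through the middle. For $Q\in\Bc$ there is no graph, but $\Wc_Q$ still has a structurally bounded number of cubes, so $U_Q$ has at most a bounded number of components $U_Q^i$; once again enlarging constants one makes each component carry volume $\approx\ell(Q)^{n+1}$, hence mutually comparable. The main obstacle is precisely this last step — pinning the component count for good cubes to exactly two and making the single-scale chaining inside each side rigorous — which is where the \emph{bilateral} nature of the corona decomposition (simultaneous interior and exterior control by the graph) and the careful choice of $\tau$ and of the constants defining $\Wc_Q$ are essential. As this is carried out in full in \cite{hofmannmartellmayboroda}, in the paper I would simply invoke that construction.
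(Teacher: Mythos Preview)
Your proposal is correct and matches the paper's approach: the paper does not prove this lemma at all but simply states it as a summary of properties established in \cite[Section~3]{hofmannmartellmayboroda}, which is exactly what you conclude you would do. Your outline of the construction and of why the good cubes split into two components via the bilateral corona graph is accurate and in fact more detailed than anything the paper itself provides.
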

For every $Q \in \Gc$, the components $U_Q^+$ and $U_Q^-$ have ``center points'' that we denote by $X_Q^+$ and $X_Q^-$,
respectively. We also set $Y_Q^\pm \coloneqq X_{\widetilde{Q}}^\pm$, where $\widetilde{Q}$ is the dyadic parent of $Q$
unless $Q = Q(\Sc)$, in which case we set $\widetilde{Q} = Q$. We use these points in the construction in 
Section \ref{section:construction_local}. For any cube $Q \in \Gc$, the collection $\Wc_Q$ breaks naturally into two disjoint subcollection $\Wc_Q^+$ and $\Wc_Q^-$.

For every $Q \in \D(E)$, we define the Carleson box as the set
\begin{align*}
  T_Q \coloneqq \text{int}\left( \bigcup_{Q' \in \D_Q} U_Q \right).
\end{align*}
For each $\Ac \subset \D(E)$, we set
\begin{align}
  \label{defin:sawtooth} \Omega_\Ac \coloneqq \text{int} \left( \bigcup_{Q' \in \Ac} U_{Q'} \right).
\end{align}

\subsection{Local $BV$}
\label{section:bounded_variation}

\begin{defin}
  We say that a function $f \in L^1_\text{loc}(\Omega)$ has \emph{locally bounded variation} (denote $f \in BV_{\text{loc}}(\Omega)$) if 
  for any bounded open set $U \subset \Omega$ such that $\overline{U} \subset \Omega$ we have
  \begin{align*}
    \sup_{\substack{\overrightarrow{\Psi} \in C_0^1(U),\\ \|\overrightarrow{\Psi}\|_{L^\infty} \le 1}} \iint_{U} f(Y) \text{div}\overrightarrow{\Psi}(Y) \, dY < \infty.
  \end{align*}
\end{defin}
The latter expression can be shown to define a measure, by the Riesz representation theorem.  We have the following:
\begin{theorem}[{\cite[Section 5.1]{evansgariepy}}]
  Suppose that $f \in BV_\text{loc}(\Omega)$. Then there exists a Radon measure $\mu$ on $\Omega$ such that
  \begin{align*}
    \mu(U) =    \sup_{\substack{\overrightarrow{\Psi} \in C_0^1(U),\\ \|\overrightarrow{\Psi}\|_{L^\infty} \le 1}} \iint_{U} f(Y) \text{div}\overrightarrow{\Psi}(Y) \, dY. 
  \end{align*}
  for any open set $U \subset \Omega$; 
  we call
  $\mu(U)$  the {\em total variation} of $f$ on $U$.
\end{theorem}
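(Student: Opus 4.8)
The plan is to identify the right-hand side with the total variation of a single linear functional and then quote the Riesz representation theorem. Define $L\colon C_0^1(\Omega;\R^{n+1})\to\R$ by $L(\overrightarrow{\Psi})\coloneqq \iint_\Omega f(Y)\,\text{div}\overrightarrow{\Psi}(Y)\,dY$, which is well defined since $f\in L^1_\text{loc}(\Omega)$ and $\text{div}\overrightarrow{\Psi}$ is continuous with compact support; note also that $L(\overrightarrow{\Psi})=\iint_U f\,\text{div}\overrightarrow{\Psi}\,dY$ whenever $\overrightarrow{\Psi}$ is supported in $U$, and that the class of competitors is symmetric under $\overrightarrow{\Psi}\mapsto-\overrightarrow{\Psi}$, so the sign convention is irrelevant. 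The assumption $f\in BV_\text{loc}(\Omega)$ says exactly that for every open $V$ with $\overline{V}$ a compact subset of $\Omega$ one has $\sup\{\,L(\overrightarrow{\Psi}):\overrightarrow{\Psi}\in C_0^1(V),\ \|\overrightarrow{\Psi}\|_{L^\infty}\le 1\,\}<\infty$, and, using the symmetry again, this upgrades to a bound $|L(\overrightarrow{\Psi})|\le C_K\,\|\overrightarrow{\Psi}\|_{L^\infty}$ for every $\overrightarrow{\Psi}\in C_0^1$ supported in a fixed compact $K\subset\Omega$. Thus $L$ is uniformly continuous on each space of compactly supported $C^1$ fields with support in $K$; since mollification makes $C_0^1(\Omega;\R^{n+1})$ dense (in the uniform norm, with supports shrinking back into $\Omega$) in $C_0(\Omega;\R^{n+1})$, $L$ extends uniquely to a linear functional on $C_0(\Omega;\R^{n+1})$ that still satisfies $|L(\overrightarrow{\Psi})|\le C_K\|\overrightarrow{\Psi}\|_{L^\infty}$ whenever $\text{supp}\,\overrightarrow{\Psi}\subset K$.

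Next I would invoke the Riesz representation theorem for such locally bounded functionals on $C_0(\Omega;\R^{n+1})$ (see \cite{evansgariepy}): there exist a Radon measure $\mu$ on $\Omega$ and a $\mu$-measurable field $\overrightarrow{\nu}\colon\Omega\to\R^{n+1}$ with $|\overrightarrow{\nu}|=1$ $\mu$-a.e.\ such that $L(\overrightarrow{\Psi})=\iint_\Omega \overrightarrow{\Psi}\cdot\overrightarrow{\nu}\,d\mu$ for all $\overrightarrow{\Psi}\in C_0(\Omega;\R^{n+1})$, and, moreover, $\mu(V)=\sup\{\,L(\overrightarrow{\Psi}):\overrightarrow{\Psi}\in C_0(\Omega;\R^{n+1}),\ \text{supp}\,\overrightarrow{\Psi}\subset V,\ \|\overrightarrow{\Psi}\|_{L^\infty}\le 1\,\}$ for every open $V\subset\Omega$. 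This $\mu$ is the measure claimed in the statement, and it only remains to check that shrinking the test class from $C_0$ to $C_0^1$ does not change this supremum.

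Fix an open $U\subset\Omega$. Since $C_0^1(U;\R^{n+1})\subset C_0(U;\R^{n+1})$, the supremum of $L(\overrightarrow{\Psi})$ over $\overrightarrow{\Psi}\in C_0^1(U)$ with $\|\overrightarrow{\Psi}\|_{L^\infty}\le 1$ is at most $\mu(U)$. For the converse, let $\overrightarrow{\Psi}\in C_0(\Omega;\R^{n+1})$ with $\text{supp}\,\overrightarrow{\Psi}\subset U$ and $\|\overrightarrow{\Psi}\|_{L^\infty}\le 1$; mollifying at scale $\delta$ smaller than $\dist(\text{supp}\,\overrightarrow{\Psi},\partial U)$ produces $\overrightarrow{\Psi}_\delta\in C_0^1(U;\R^{n+1})$ with $\|\overrightarrow{\Psi}_\delta\|_{L^\infty}\le 1$ and $\overrightarrow{\Psi}_\delta\to\overrightarrow{\Psi}$ uniformly, all supported in one fixed compact subset of $\Omega$. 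The local bound on $L$ gives $L(\overrightarrow{\Psi}_\delta)\to L(\overrightarrow{\Psi})$, so the $C_0^1$-supremum is at least $L(\overrightarrow{\Psi})$; taking the supremum over such $\overrightarrow{\Psi}$ yields $\ge\mu(U)$. Combining the two inequalities proves the stated identity, both sides being allowed to equal $+\infty$ when $U$ is not relatively compact in $\Omega$.

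There is no deep obstacle once the Riesz representation theorem is granted; the only points requiring a little care are the density/mollification step — one must keep the approximating fields compactly supported inside $U$ (respectively inside a fixed compact subset of $\Omega$) so that the local continuity estimate for $L$ applies — and the observation that the symmetry of the competitor class turns the one-sided $BV_\text{loc}$ hypothesis into a genuine operator-norm bound on $L$.
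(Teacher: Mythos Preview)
Your argument is correct and is essentially the standard proof: extend the linear functional from $C_0^1$ to $C_0$ using the local boundedness coming from the $BV_{\text{loc}}$ hypothesis, apply the Riesz representation theorem for vector-valued continuous functions to obtain the Radon measure $\mu$ and the unit field $\overrightarrow{\nu}$, and finally check by mollification that the supremum over $C_0^1$ test fields agrees with the supremum over $C_0$ test fields. The only points of care are exactly those you flagged.

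Note, however, that the paper does not actually prove this theorem; it is stated as a citation to \cite[Section 5.1]{evansgariepy} and used as a black box. Your proof is precisely the argument given there (the ``Structure Theorem for $BV_{\text{loc}}$ functions''), so there is nothing to compare beyond observing that you have reproduced the referenced proof rather than an alternative.
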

Abusing notation, for  an open set $U\subset \Omega$, we shall write
\[\mu(U):= \iint_U |\nabla f(Y)| \, dY,\] 
which should not be mistaken for a usual Lebesgue integral. 
Indeed, we may have situations where $A \subset B$ and $|A| = |B|$ but
$\iint_A |\nabla f(Y)| \, dY \ll \iint_B |\nabla f(Y)| \, dY$. 

In particular, if $f \in BV_\text{loc}(\Omega)$, the sets $U, U_1, \ldots, U_k \subset \Omega$ are open and $U \subset \bigcup_i U_i$, then
\begin{align}
  \label{estimate:additive_variation} \iint_U |\nabla f(Y)| \, dY \le \sum_i \iint_{U_i} |\nabla f(Y)| \, dY.
\end{align}

\begin{remark}   We emphasize that we write $ |\nabla f| dY$ to indicate the variation measure of $f$, which is denoted
by $\|Df\|$ in \cite{evansgariepy};  thus, for $f\in BV_{loc}(\Omega)$, and for any open set $U\subset \Omega$, we let
$\iint_{U} |\nabla f| dY$  denote the total variation of $f$ over $U$. We shall continue to use this 
(mildly abusive) notational convention in the sequel,
when working with elements of $BV_{loc}(\Omega)$.
\end{remark}

\subsection{$\Cc$ and $\Cc_\D$}
\label{section:cc}
For every $k \in \N$, we let $F_k$ be the ordered pair $(E,k)$. In this section, we let $Q_0 = E$ be the maximal dyadic cube if 
$E$ is a bounded set. We define the operators $\Cc$ and $\Cc_\D$ by setting
\begin{align*}
  \Cc(f)(x) &\coloneqq \sup_{r > 0} \frac{1}{r^n} \iint_{B(x,r) \setminus E} |f(Y)| \, dY, \\
  \Cc_\D(f)(x) &\coloneqq \sup_{Q \in \D^*, x \in Q} \frac{1}{\ell(Q)^n} \iint_{T_Q} |f(Y)| \, dY,
\end{align*}
where
\begin{align*}
  \D^* \coloneqq \left\{ \begin{array}{cl}
                           \D(E), &\text{if } \ \diam(E) = \infty \\
                           \D(E) \cup \{F_k \colon k=\Lambda_0,\Lambda_0+1,\ldots\}, &\text{if } \ \diam(E) < \infty
                         \end{array} \right.
\end{align*}
and
\begin{align*}
  T_{F_k} \coloneqq B(z_0, 2^k \diam(E)), \ \ \ \ \ \ \ell(F_k) \coloneqq 2^k \diam(E)
\end{align*}
for some fixed point $z_0 \in E$ and a number $\Lambda_0$ such that $T_{Q_0} \subset T_{F_{\Lambda_0}}$. We will call also the pairs $F_k$ cubes although their actual 
structure is irrelevant and we will interpret $x \in F_k$ simply as $x \in E$.

Usually, these functions are not pointwise equivalent but we only have $\Cc_\D(f)(x) \lesssim \Cc(f)(x)$ for every $x \in E$
(this follows from the ADR property of $E$ and the fact that $T_Q \subset B(z_Q, C\ell(Q))$ for a uniform constant $C$). However, 
in $L^p$ sense, these functions are always comparable. This can be seen easily from the level set comparison formula that 
we prove next. This comparability is convenient for us since we construct the approximating function $\varphi$ in Theorem \ref{theorem:main_result}
with the help of the dyadic Whitney regions. Thus, it is more natural for us to prove the desired $L^p$ bound for 
$\Cc_\D(\nabla \varphi)$ instead of $\Cc(\nabla \varphi)$. We prove the comparison formula by using well-known techniques from the proof of the 
corresponding formula for the Hardy-Littlewood maximal function and its dyadic version \cite[Lemma 2.12]{duoandikoetxea}.

\begin{lemma}
  \label{lemma:Lp-comparability_of_C}
  Suppose that $f \in BV_\text{loc}(\Omega)$. Then there exist uniform constants $A_1$ and $A_2$ (depending on the dimension and the ADR constant) such that for every $\lambda > 0$ we have
  \begin{align*}
    \sigma\left(\left\{ x \in E \colon \Cc(\nabla f)(x) > A_1 \lambda \right\}\right) \ \le  \ A_2 \cdot \sigma\left(\left\{ x \in E \colon \Cc_\D(\nabla f)(x) > \lambda \right\} \right).
  \end{align*}
  In particular, $\| \Cc(f) \|_{L^p(E)} \le A_1 A_2^{1/p} \| \Cc_\D(f) \|_{L^p(E)}$ for every $p \in (1,\infty)$.
\end{lemma}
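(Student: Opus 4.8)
The plan is to mimic the classical proof that the Hardy--Littlewood maximal function and its dyadic analogue have comparable $L^p$ norms (as in \cite[Lemma 2.12]{duoandikoetxea}), replacing Euclidean balls by Carleson boxes. The key geometric input is the following: every Euclidean ball $B(x,r)$ with $x \in E$ is, up to bounded dilation, comparable to a dyadic Carleson box. More precisely, given $x \in E$ and $r > 0$, I would choose $Q = Q(x,r) \in \D^*$ to be a cube containing $x$ with $\ell(Q) \approx r$ (such a cube exists: if $r \lesssim \diam(E)$ use the dyadic lattice of Theorem \ref{thm:dyadic_cubes} and the connected-tree property, picking the generation with $2^{-k} \approx r$; if $r \gtrsim \diam(E)$ use one of the auxiliary "cubes" $F_k$). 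Then on one hand $B(x,r) \setminus E \subset T_Q$ after enlarging $Q$ by a bounded factor, because $T_Q \subset B(z_Q, C\ell(Q))$ and conversely every Whitney region $U_{Q'}$ meeting $B(x,r)$ is contained in a fixed dilate of $T_Q$; on the other hand $\ell(Q)^n \approx r^n \approx \sigma(\Delta(x,r)) \approx \sigma(Q)$ by ADR and \eqref{dyadic_cubes_balls_inclusion}. Together with the additivity estimate \eqref{estimate:additive_variation} for the variation measure, this yields a pointwise bound $\Cc(\nabla f)(x) \lesssim \Cc_\D^{**}(\nabla f)(x)$, where $\Cc_\D^{**}$ is a slightly enlarged dyadic operator; one then absorbs the enlargement.

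The second step is a Calder\'on--Zygmund/stopping-time argument to pass from the enlarged dyadic operator back to $\Cc_\D$ at the level of distribution functions. Fix $\lambda > 0$ and let $\{Q_j\}$ be the maximal cubes in $\D^*$ with $\frac{1}{\ell(Q_j)^n}\iint_{T_{Q_j}}|\nabla f| > \lambda$; these are pairwise disjoint and their union (as subsets of $E$) is exactly $\{\Cc_\D(\nabla f) > \lambda\}$. The point is that if $\Cc(\nabla f)(x) > A_1\lambda$, then by Step 1 there is a ball realizing this, hence an enlarged dyadic box, hence (choosing $A_1$ large enough depending on the dilation/ADR constants) $x$ lies in one of the $Q_j$ or in a cube of comparable size adjacent to one of them. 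Using the bounded overlap of the Whitney regions and the fact that each $T_{Q_j}$ only contributes to boundedly many neighbors, one gets $\sigma(\{\Cc(\nabla f) > A_1\lambda\}) \le A_2 \sum_j \sigma(Q_j) = A_2\, \sigma(\{\Cc_\D(\nabla f) > \lambda\})$, which is the claimed inequality.

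Finally, the $L^p$ consequence is immediate from the distributional inequality by integrating:
\[
\|\Cc(\nabla f)\|_{L^p(E)}^p = p\int_0^\infty \lambda^{p-1}\sigma(\{\Cc(\nabla f) > \lambda\})\,d\lambda = A_1^p\, p \int_0^\infty t^{p-1}\sigma(\{\Cc(\nabla f) > A_1 t\})\,dt,
\]
and bounding the last integrand by $A_2\,\sigma(\{\Cc_\D(\nabla f) > t\})$ gives $\|\Cc(\nabla f)\|_{L^p(E)}^p \le A_1^p A_2 \|\Cc_\D(\nabla f)\|_{L^p(E)}^p$, i.e. the stated bound with constant $A_1 A_2^{1/p}$. (The statement is phrased with $f$ in place of $\nabla f$ in the last line, but this is just the notational abuse for the variation measure, so nothing changes.)

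I expect the main obstacle to be the geometric comparison in Step 1, specifically controlling $\Cc(\nabla f)$ by a dyadic quantity when $r$ is not comparable to any single scale in the obvious way — e.g. when $x$ sits near the boundary of several cubes, or when $B(x,r)$ straddles regions belonging to both $\Gc$ and $\Bc$ so that $B(x,r)\setminus E$ is not literally contained in one $T_Q$. The fix is to not insist on a single box but to cover $B(x,r)\setminus E$ by the (boundedly many, by bounded overlap of the $U_Q$'s and the constraint $\ell(Q)\approx\delta(\cdot)\approx r$) Whitney regions $U_{Q'}$ that it meets, observe each such $Q'$ satisfies $\ell(Q')\lesssim r$ and $\dist(Q', x)\lesssim r$ hence is contained in a fixed dilate of any cube $Q\ni x$ with $\ell(Q)\approx r$, and then invoke \eqref{estimate:additive_variation} together with ADR to sum up; the bounded-overlap property is what keeps the number of terms, and hence the constant $A_2$, structural.
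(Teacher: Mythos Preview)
Your overall strategy---compare level sets by covering balls with dyadic Carleson boxes, then use ADR and doubling to pass back to cubes---is exactly the paper's, and your Step~2 and the $L^p$ deduction match the paper's argument.

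There is, however, a concrete error in the covering step. In your final paragraph you assert that $B(x,r)\setminus E$ meets only \emph{boundedly many} Whitney regions $U_{Q'}$, justified by ``$\ell(Q)\approx\delta(\cdot)\approx r$''. This is false: since $x\in E$, points $Y\in B(x,r)$ have $\delta(Y)$ ranging over all of $(0,r)$, so $B(x,r)$ meets Whitney regions at \emph{every} scale $\ell(Q')\lesssim r$---infinitely many of them. For the same reason your Step~1 claim that $B(x,r)\setminus E$ fits inside a single enlarged $T_Q$ with $\ell(Q)\approx r$ need not hold: a small cube $Q'$ with $\dist(Q',x)\lesssim r$ can lie in a different branch of the dyadic tree and hence fail to be a subcube of any ancestor of $Q$ of comparable side length, so $U_{Q'}\not\subset T_{\hat Q}$.

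The paper repairs this exactly where you anticipated the difficulty, but with Carleson boxes at a \emph{single scale} rather than Whitney regions. Given $y$ and $r$, one takes the (boundedly many, by ADR) cubes $R_1,\dots,R_m$ of the generation $\ell(R_j)=2^k\approx r$ that meet $\Delta(y,r)$; a short geometric argument (referenced to \cite[pp.~2353--2354]{hofmannmartellmayboroda}) gives $B(y,r)\subset\bigcup_{j=1}^m T_{R_j}$. One then runs the contrapositive of your Step~2: if $y\notin\bigcup_i 2\Delta_{Q_i}$ (with $\{Q_i\}$ your maximal stopping cubes), then no $R_j$ can be contained in any $Q_i$ by \eqref{dyadic_cubes_big_balls_inclusion}, so each satisfies $\ell(R_j)^{-n}\iint_{T_{R_j}}|\nabla f|\le\lambda$, and \eqref{estimate:additive_variation} yields $r^{-n}\iint_{B(y,r)}|\nabla f|\lesssim\lambda$. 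Replacing your Whitney-region count with this bounded cover by $T_{R_j}$'s makes the argument go through; the rest of your outline is correct as written.
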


\begin{proof}
  We first note that if $r \gg \diam(E)$, then by the definition of $\Cc_\D$ we have the bound 
  $\tfrac{1}{r^n} \iint_{B(x,r) \setminus E} |\nabla f(Y)| \, dY \lesssim \Cc_\D(\nabla f)(x)$. Thus,
  we may assume that the balls in this proof have uniformly bounded radii $\lesssim \diam(E)$ and the cubes belong to $\D(E)$.
  Naturally, we may also assume that the right hand side of the inequality is finite.
  
  We notice that if $\Cc_\D(f)(x) > \lambda$, then there exists a cube $Q \in \D(E)$ such that $x \in Q$ and $\tfrac{1}{\sigma(Q)} \iint_{T_Q} |\nabla f(Y)| \, dY > \lambda$. By the definition of $\Cc_\D(f)$, we also have $\Cc_\D(f)(y) > \lambda$ for every $y \in Q$. In particular, we have
  \begin{align*}
    \left\{ x \in E \colon \Cc_\D(\nabla f)(x) > \lambda \right\} = \bigcup_i Q_i
  \end{align*}
  for disjoint dyadic cubes $Q_i$. We now claim that if $A_1$ is large enough, then
  \begin{align}
    \label{inclusion_level_sets} \left\{ x \in E \colon \Cc(\nabla f)(x) > A_1 \lambda \right\} \subseteq \bigcup_i 2\Delta_{Q_i}
  \end{align}
  where $\Delta_{Q_i}$ is the surface ball \eqref{dyadic_cubes_balls_inclusion}. Suppose that $y \notin \bigcup_i 2\Delta_{Q_i}$ and let $r > 0$. 
  Let us choose $k \in \Z$ so that $2^{k-1} \le r < 2^k$. Now there exist at most $K$ dyadic cubes $R_1, R_2, \ldots, R_m$ such 
  that $\ell(R_j) = 2^k$ and $R_j \cap \Delta(y,r) \neq \emptyset$ for every $j=1,2,\ldots,m$. We notice that none of the cubes $R_j$ can be contained in any of the cubes 
  $Q_i$ since otherwise we would have $y \in 2\Delta_{R_j} \subset 2\Delta_{Q_i}$ by \eqref{dyadic_cubes_big_balls_inclusion}.
  Thus, we have $\tfrac{1}{\ell(R_j)^n} \iint_{T_{R_j}} |\nabla f(Y)| \, dY \le \lambda$ for every $j$.
  We can use a straightforward geometric argument to show that $B(y,r) \subset \bigcup_{j=1}^m T_{R_j}$ 
  (see \cite[pages 2353-2354]{hofmannmartellmayboroda}).
  Hence, since $r \approx \ell(R_j)$ for every $j$, we have
  \begin{align*}
    \frac{1}{r^n} \iint_{B(y,r)} |\nabla f(Y)| \, dY &\overset{\eqref{estimate:additive_variation}}{\lesssim} \sum_{j=1}^m \frac{1}{\ell(R_j)^n} \iint_{T_{R_j}} |\nabla f(Y)| \, dY
                                                     \lesssim \lambda
  \end{align*}
  and $y \notin \left\{ x \in E \colon \Cc(\nabla f)(x) > A_1 \lambda \right\}$ for a large enough $A_1$. In particular, \eqref{inclusion_level_sets} holds 
  and we have
  \begin{align*}
    \sigma( \left\{ x \in E \colon \Cc(\nabla f)(x) > A_1 \lambda \right\})
    &\le \sum_i \sigma(2\Delta_{Q_i}) \\
    &\lesssim \sum_i \sigma(Q_i) \\
    &= \sigma\left( \bigcup_i Q_i \right)
    = \sigma( \left\{ x \in E \colon \Cc_\D(\nabla f)(x) > \lambda \right\}).
  \end{align*}
  The $L^p$ comparability $\Cc(\nabla f)$ and $\Cc_\D(\nabla f)$ follows immediately:
  \begin{align*}
    \|\Cc(\nabla f)\|_{L^p(E)}^p &= p \int_0^\infty \lambda^{p-1} \sigma(\{x \in E \colon \Cc(\nabla f)(x) > \lambda\}) \, d\lambda \\
                                 &\le A_2 p \int_0^\infty \lambda^{p-1} \sigma(\{x \in E \colon A_1 \Cc_\D(\nabla f)(x) > \lambda\}) \, d\lambda \\
                                &= A_1^p A_2 \|\Cc_D(\nabla f)\|_{L^p(E)}^p.
  \end{align*}

\end{proof}

\subsection{Cones, non-tangential maximal functions and square functions}
\label{subsection:non-tangential}

We recall from \cite[Section 3]{hofmannmartellmayboroda} that the Whitney regions $U_Q$ and the fattened Whitney regions $\widehat{U}_Q$, $Q \in \D$, are defined using fattened Whitney boxes $I^* \coloneqq (1 + \tau)I$ and $I^{**} \coloneqq (1 + 2\tau)I$ respectively, where $\tau$ is a suitable positive parameter. Let us define the regions $\widehat{\textbf{U}}_Q$ using even fatter Whitney boxes $I^{***} \coloneqq (1 + 3\tau)W$.

\begin{defin}
  \label{defin:non-tangential}
  For any $x \in E$, we define the \emph{cone at $x$} by setting
  \begin{align}
    \label{defin:cone} \Gamma(x) \coloneqq \bigcup_{Q \in \D(E), Q \ni x} \widehat{\textbf{U}}_Q.
  \end{align}
  We define the \emph{non-tangential maximal function} $N_*u$ and, for $u \in W^{1,2}_{\text{loc}}(\Omega)$, 
  the square function $Su$ as follows:
  \begin{align*}
    N_*u(x) &\coloneqq \sup_{Y \in \Gamma(x)} |u(Y)|, \ \ \ \ \ x \in E,\\
    Su(x)   &\coloneqq \left( \int_{\Gamma(x)} |\nabla u(Y)|^2 \delta(Y)^{1-n} \, dY \right)^{1/2}, \ \ \ \ \ x \in E.
  \end{align*}
\end{defin}
The Hyt\"onen-Ros\'en techniques in \cite[Section 6]{hytonenrosen} rely on the use of local $S \lesssim N$ and $N \lesssim S$ estimates 
from \cite{hofmannkenigmayborodapipher}. Although a local $S \lesssim N$ estimate holds also in our context \cite{hofmannmartellmayboroda_square}, a local $N \lesssim S$ estimate 
does not hold without suitable assumptions on connectivity. Thus, we cannot apply the Hyt\"onen-Ros\'en techniques directly but we have to combine them with the techniques created in 
\cite{hofmannmartellmayboroda}.

In Section \ref{section:construction} we consider the following modified versions of $\Gamma(x)$ and $N_* u$ to bypass some additional technicalities:

\begin{defin}
  For every $x \in E$ and $\alpha > 0$ we define the \emph{cone of $\alpha$-aperture at $x$} $\Gamma_\alpha(x)$ by setting
  \begin{align}
    \label{defin:modified_cone} \Gamma_\alpha(x) \coloneqq \bigcup_{Q \in \D(E), Q \ni x} \bigcup_{\substack{P \in \D(E), \\ \ell(P) = \ell(Q), \\ \alpha \Delta_Q \cap P \neq \emptyset}} \widehat{\textbf{U}}_P.
  \end{align}
  Using the cones $\Gamma_\alpha(x)$, we define the \emph{non-tangential maximal function of $\alpha$-aperture} $N^\alpha_* u$
  by setting $N_*^\alpha u(x) \coloneqq \sup_{Y \in \Gamma_\alpha(x)} |u(Y)|$.
\end{defin}

\begin{remark}
  If the set $E$ is bounded, then the cones \eqref{defin:cone} and \eqref{defin:modified_cone} are also bounded since we 
  only constructed Whitney regions $U$ such that $\diam(U) \lesssim \diam(E)$. Thus, if $E$ is bounded, we use the cones
  \begin{align*}
    &\widehat{\Gamma}(x) \coloneqq \Gamma(x) \cup B(z_0, C\cdot\diam(E))^c \text{ and} \\
    &\widehat{\Gamma}_\alpha(x) \coloneqq \Gamma_\alpha(x) \cup B(z_0, C_\alpha\cdot\diam(E))^c
  \end{align*}
  for a suitable point $z_0 \in E$ and suitable constants $C$ and $C_\alpha$ instead.
\end{remark}
The usefulness of these modified cones and non-tangential maximal functions lies in the fact that for a suitable choice of $\alpha$ the cone $\Gamma_\alpha(x)$ 
contains some crucial points that may not be contained in $\Gamma(x)$ and in the $L^p$ sense the function $N_*^\alpha u$ is not too much larger than 
$N_* u$. We prove the latter claim in the next lemma but postpone the proof of the first claim to Section \ref{section:construction}.

\begin{lemma}
  \label{lemma:Lp-comparability_of_N}
  Suppose that $u$ is a continuous function and let $\alpha \ge 1$. Then $\|N_*u\|_{L^p(E)} \approx_\alpha \|N_*^\alpha u\|_{L^p(E)}$
  for every $p \in (0,\infty)$.
\end{lemma}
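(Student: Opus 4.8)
The statement compares $\|N_*u\|_{L^p(E)}$ with $\|N_*^\alpha u\|_{L^p(E)}$, where $N_*^\alpha$ is a wider-aperture version of the non-tangential maximal function. One direction is trivial: since $\Gamma(x) \subset \Gamma_\alpha(x)$ for every $\alpha \geq 1$ (taking $P = Q$ in the inner union of \eqref{defin:modified_cone}, which is allowed because $Q \subset \alpha\Delta_Q$), we have $N_*u(x) \leq N_*^\alpha u(x)$ pointwise, hence $\|N_*u\|_{L^p(E)} \leq \|N_*^\alpha u\|_{L^p(E)}$. The content is the reverse inequality $\|N_*^\alpha u\|_{L^p(E)} \lesssim_\alpha \|N_*u\|_{L^p(E)}$, which is the classical Fefferman–Stein / change-of-aperture phenomenon for non-tangential maximal functions in a space of homogeneous type.

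The plan is to prove the reverse inequality via a good distributional / covering argument. Fix $\lambda > 0$ and consider the level set $O_\lambda \coloneqq \{x \in E : N_*^\alpha u(x) > \lambda\}$. First I would show that if $x \in O_\lambda$, then a definite proportion of a surface ball around $x$ lies in $\{N_*u > \lambda\}$: concretely, if $Y \in \widehat{\textbf U}_P$ with $|u(Y)| > \lambda$ and $P$ is one of the cubes appearing in \eqref{defin:modified_cone} for some $Q \ni x$ with $\ell(P) = \ell(Q)$ and $\alpha\Delta_Q \cap P \neq \emptyset$, then $Y \in \Gamma(z)$ for every $z \in P$, so $P \subset \{N_*u > \lambda\}$; and since $P$ and $Q$ have comparable side length with $\alpha\Delta_Q \cap P \neq \emptyset$, the ADR property gives $\sigma(P) \gtrsim_\alpha \sigma(\Delta(x, C\alpha\ell(Q))) \gtrsim_\alpha \sigma(B)$ for the ball $B \coloneqq \Delta(x, C\alpha\ell(Q))$ containing both $x$ and $P$. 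Hence every $x \in O_\lambda$ has an associated surface ball $B_x$ with $\sigma(B_x \cap \{N_*u > \lambda\}) \geq c_\alpha \,\sigma(B_x)$, i.e. $M_\sigma(1_{\{N_*u > \lambda\}})(x) \gtrsim_\alpha 1$ where $M_\sigma$ is the (uncentered) Hardy–Littlewood maximal operator on $(E,\sigma)$. Therefore $O_\lambda \subset \{M_\sigma(1_{\{N_*u > \lambda\}}) > c_\alpha\}$, and the weak-$(1,1)$ bound for $M_\sigma$ (valid since $(E,\sigma)$ is a space of homogeneous type by the ADR property) yields $\sigma(O_\lambda) \lesssim_\alpha \sigma(\{N_*u > \lambda\})$.

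With this level-set comparison in hand, the $L^p$ statement for $p \in (0,\infty)$ follows from the distribution-function formula exactly as in Lemma \ref{lemma:Lp-comparability_of_C}:
\begin{align*}
  \|N_*^\alpha u\|_{L^p(E)}^p = p\int_0^\infty \lambda^{p-1} \sigma(O_\lambda)\,d\lambda
  \lesssim_\alpha p\int_0^\infty \lambda^{p-1}\sigma(\{N_*u > \lambda\})\,d\lambda
  = \|N_*u\|_{L^p(E)}^p,
\end{align*}
which combined with the trivial direction gives $\|N_*u\|_{L^p(E)} \approx_\alpha \|N_*^\alpha u\|_{L^p(E)}$. If $E$ is bounded one runs the identical argument with the enlarged cones $\widehat\Gamma$ and $\widehat\Gamma_\alpha$; the complementary pieces $B(z_0, C\cdot\diam(E))^c$ and $B(z_0, C_\alpha\cdot\diam(E))^c$ only affect the values of $N_*u$ and $N_*^\alpha u$ by a harmless additive/multiplicative structural adjustment near the supremum over that fixed far region, so the level-set comparison is unaffected up to adjusting $\lambda$ by a constant factor.

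**Main obstacle.** The one genuinely delicate point is the geometric claim that $Y \in \widehat{\textbf U}_P$ implies $Y \in \widehat{\textbf U}_P \subset \Gamma(z)$ for all $z \in P$ — i.e. that $\widehat{\textbf U}_P$ really is one of the fattened Whitney regions comprising the cone $\Gamma(z)$ from \eqref{defin:cone} — together with the bookkeeping that the cube $P$ produced by the definition of $\Gamma_\alpha$ has $\sigma(P)$ comparable (with $\alpha$-dependent constants) to $\sigma$ of the enveloping ball $B_x$. Both reduce to the standard properties of the Whitney regions listed in the excerpt ($\ell(P) \approx \ell(Q)$, $\dist(Q,P) \lesssim \ell(Q)$ forces $P \subset C\Delta_Q$) and the ADR estimate $\sigma(P) \approx \ell(P)^n \approx_\alpha \sigma(B_x)$; I expect this to be routine but requiring care to track how the constants depend on $\alpha$.
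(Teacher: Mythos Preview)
Your argument is correct. You run the Fefferman--Stein change-of-aperture argument directly on the dyadic cones: for $x$ in the level set $\{N_*^\alpha u>\lambda\}$ you locate a cube $P$ with $\ell(P)=\ell(Q)$ and $P\subset\Delta(x,C\alpha\ell(Q))$ on which $N_*u>\lambda$, deduce $M_\sigma(1_{\{N_*u>\lambda\}})(x)\gtrsim_\alpha 1$, and invoke the weak-$(1,1)$ bound for $M_\sigma$ on the ADR set $E$. This is a genuinely different route from the paper's proof. The paper does not redo the Fefferman--Stein distributional argument; instead it introduces the classical cones $\widetilde\Gamma_\gamma(x)=\{Y:\dist(x,Y)<\gamma\,\delta(Y)\}$ and the associated maximal function $\Nc_\gamma$, observes that the Fefferman--Stein lemma $\|\Nc_\gamma u\|_{L^p}\approx_{\gamma,\gamma_0}\|\Nc_{\gamma_0}u\|_{L^p}$ transfers verbatim to ADR sets, and then proves two pointwise cone inclusions $\widetilde\Gamma_{\gamma_0}(x)\subset\Gamma(x)$ and $\Gamma_\alpha(x)\subset\widetilde\Gamma_{\gamma(\alpha)}(x)$ to sandwich $\|N_*^\alpha u\|_{L^p}$ between $\|\Nc_{\gamma(\alpha)}u\|_{L^p}$ and $\|\Nc_{\gamma_0}u\|_{L^p}\le\|N_*u\|_{L^p}$. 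Your approach is more self-contained; the paper's is shorter by outsourcing the core estimate to the classical literature. Your handling of the bounded case is a bit breezy, but so is the paper's (``almost the same''): once one notes that the far regions $B(z_0,C\diam(E))^c$ lie in every cone $\widehat\Gamma(z)$, a point $Y$ there with $|u(Y)|>\lambda$ forces $\{N_*u>\lambda\}=E$, and the level-set comparison is trivial.
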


\begin{proof}
  We only prove the claim for the case $\diam(E) = \infty$ as the proof for the case $\diam(E) < \infty$ is almost the same.
  
  Since the set $E$ is ADR, measures of balls with comparable radii are comparable. Using this property makes it is simple 
  and straightforward to generalize the classical proof of C. Fefferman and E. Stein \cite[Lemma 1]{feffermanstein} from $\R^{n+1}_+$ to $\Omega$
  to show that $\| \Nc_\alpha u\|_{L^p(E)} \approx_{\alpha,\beta} \| \Nc_\beta u\|_{L^p(E)}$ where
  \begin{align*}
    \Nc_\gamma u(x) \coloneqq \sup_{Y \in \widetilde{\Gamma}_\gamma(x)} |u(Y)|, \ \ \ \ \ 
    \widetilde{\Gamma}_\gamma(x) \coloneqq \left\{ Y \in \Omega \colon \dist(x,Y) < \gamma \cdot \delta(Y) \right\}.
  \end{align*}
  By the definition of the cones $\Gamma(x)$, there exists $\gamma_0 > 0$ such that 
  $\widetilde{\Gamma}_{\gamma_0}(x) \subset \Gamma(x)$ for every $x \in E$. Thus, we only need to show that
  $\Gamma_\alpha(x) \subset \widetilde{\Gamma}_\gamma(x)$ for some uniform $\gamma = \gamma(\alpha)$ for all $x \in E$ since this gives us the estimate (*) 
  in the chain
  \begin{align*}
    \|N_* u\|_{L^p(E)}
    \le \|N_*^\alpha u\|_{L^p(E)}
    &\overset{\text{(*)}}{\le} \|\Nc_{\gamma} u\|_{L^p(E)} \\
    &\approx_{\gamma,\gamma_0} \|\Nc_{\gamma_0} u\|_{L^p(E)}
    \le \|N_* u\|_{L^p(E)}.
  \end{align*}
  Suppose that $Q, P \in \D(E)$, $x \in Q$, $\ell(Q) = \ell(P)$ and $\alpha \Delta_Q \cap P \neq \emptyset$. By the construction 
  of the Whitney regions, for every $Y \in \widehat{\textbf{U}}_P$ we have
  \begin{align*}
    \delta(Y) \approx \ell(P) \approx \dist(Y,P).
  \end{align*}
  On the other hand, since $\alpha \Delta_Q \cap P \neq \emptyset$ and $\ell(P) = \ell(Q)$, we know that for any $y \in P$ we have
  \begin{align*}
    \dist(x,y) \lesssim \alpha \ell(Q) = \alpha \ell(P).
  \end{align*}
  Let us take any $z \in P$. Now for every $Y \in \widehat{\textbf{U}}_P$ we have
  \begin{align*}
    \dist(x,Y) \le \dist(x,z) + \dist(z,Y) \lesssim \alpha \ell(P) + \ell(P) \lesssim \alpha \ell(P) \approx \alpha \cdot \delta(Y).
  \end{align*}
  In particular, there exists a uniform constant $\gamma = \gamma(\alpha)$ such that $\Gamma_\alpha(x) \subset \widetilde{\Gamma}_{\gamma}(x)$.
\end{proof}

\section{Principal cubes}

As in \cite{hytonenrosen}, we define the numbers $M_\D(N_*u)(Q)$ by setting
\begin{align*}
  M_{\D}(N_* u)(Q) \coloneqq \sup_{Q \subseteq R \in \D} \fint_R N_* u(y) \, d\sigma(y)
\end{align*}
for every $Q \in \D(E) \eqqcolon \D$. We shall use a collection $\Ic \subset \D(E) = \D$ such that
\begin{align}
  \label{initial_collection} \Ic \coloneqq \left\{Q_i \colon i \in \widetilde{\N} \right\}, \ \ \ \ \ Q_i \subsetneq Q_{i+1} \ \forall i, \ \ \ \ \ \bigcup_i Q_i = E,
\end{align}
where $\widetilde{\N} = \{1,2,\ldots,n_0\}$ for some $n_0 \in \N$ if $E$ is bounded, and $\widetilde{\N} = \N$ otherwise. 
This type of a collection exists by the last property in Theorem \ref{thm:dyadic_cubes} and by the properties of dyadic cubes, the 
collection is Carleson. Let us construct a collection $\Pc \subset \D$ of ''stopping cubes`` using the construction described in
\cite[Section 6.1]{hytonenrosen}. We set $\Pc_0 \coloneqq \Ic$ and consider all the cubes $Q' \in \D(E) \setminus \Pc_0$ such that
\begin{enumerate}
  \item[(a)] for some $Q \in \Pc_0$ we have $Q' \subsetneq Q$ and
             \begin{align}
               \label{stopping_condition_1} M_{\D}(N_* u)(Q') = \sup_{Q' \subseteq R \in \D} \fint_R N_* u(y) \, d\sigma(y) > 2M_{\D}(N_* u)(Q),
             \end{align}
  \item[(b)] $Q'$ is not contained in any such $Q'' \subsetneq Q$ such that either $Q'' \in \Pc_0$ or \eqref{stopping_condition_1} holds for the pair
             $(Q'',Q)$.
\end{enumerate}
We denote by $\Pc_1$ the collection we get by adding all the cubes $Q'$ satisfying both (a) and (b) to $\Pc_0$. We then continue this process for $\Pc_1$ in place of $\Pc_0$ and 
so on. We set $\Pc \coloneqq \bigcup_{k=0}^\infty \Pc_k$. We also set
\begin{align*}
  \pi_\Pc Q = \text{ the smallest cube } Q_0 \in \Pc \text{ such that } Q \subseteq Q_0.
\end{align*}
Here we mean smallest with respect to the side length. Naturally, we have $\pi_\Pc Q = Q$ for every $Q \in \Pc$, and since $\Ic \subset \Pc$, for every cube $Q \in \D$ there exists some cube $P_Q \in \Pc$ such that $Q \subset P_Q$.

\begin{remark}
  \label{remark:simplification}
  The collection $\Pc$ is an auxiliary collection that helps us to simplify the proofs of several claims. We use it in the following way. Suppose 
  that we have a subcollection $\mathcal{W} \subset \D$ and we want to show that $\mathcal{W}$ satisfies a Carleson packing condition. 
  Let $Q_0 \in \D$. Now for every $Q \in \mathcal{W}$ such that $Q \subset Q_0$, we have either $\pi_\Pc Q = \pi_\Pc Q_0$ or $\pi_\Pc Q = P = \pi_\Pc P$
  for some $P \in \Pc$ such that $P \subsetneq \pi_\Pc Q_0$. In particular, we have
  \begin{align*}
    \sum_{Q \in \mathcal{W}, Q \subseteq Q_0} \sigma(Q)
    &= \sum_{\substack{Q \in \mathcal{W}, \\ \pi_\Pc Q = \pi_\Pc Q_0}} \sigma(Q) + \sum_{P \in \Pc, P \subsetneq \pi_\Pc Q_0} \sum_{\substack{Q \in \mathcal{W},\\ \pi_\Pc Q = P}} \sigma(Q)
    \eqqcolon I_{Q_0} + \sum_{P \in \Pc, P \subsetneq \pi_\Pc Q_0} I_P.
  \end{align*}
  We prove in Lemma \ref{lemma:p-carleson} below that the collection $\Pc$ satisfies a Carleson packing condition. Thus, if we can show that $I_{Q_0} \lesssim \sigma(Q_0)$
  for an arbitrary cube $Q_0 \in \Pc$, we get
  \begin{align*}
    \sum_{P \in \Pc, P \subsetneq \pi_\Pc Q_0} I_P \lesssim \sum_{P \in \Pc, P \subsetneq \pi_\Pc Q_0} \sigma(P) \lesssim \sigma(Q_0).
  \end{align*}
  Thus, to show that the collection $\mathcal{W}$ satisfies a Carleson packing condition, it is enough to show that $I_{Q_0} \lesssim \sigma(Q_0)$ 
  for every cube $Q_0 \in \D$. The usefulness of this simplification is that if $Q \in \D \setminus \Pc$ and $\pi_\Pc Q = P$, then 
  by the construction of the collection $\Pc$ we have
  \begin{align*}
    M_{\D}(N_* u)(Q) \le 2M_{\D}(N_* u)(P).
  \end{align*}
  We use this property several times in the proofs.
\end{remark}

For any cube $Q_0 \in \D$, we say that $R \in \Pc$ is a \emph{$\Pc$-proper subcube of $Q_0$} if we have
$M_{\D}(N_* u)(R) > 2M_{\D}(N_* u)(Q_0)$ and $M_{\D}(N_* u)(R') \le 2M_{\D}(N_* u)(Q_0)$ for every intermediate cube $R \subsetneq R' \subsetneq Q_0$. 

\begin{lemma}
  \label{lemma:p-carleson}
  For every $Q_0 \in \D(E)$ we have
  \begin{align}
    \label{estimate:Pc_carleson} \sum_{P \in \Pc, P \subseteq Q_0} \sigma(P) \lesssim \sigma(Q_0).
  \end{align}
\end{lemma}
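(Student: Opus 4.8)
The plan is to decompose $\Pc = \Ic \cup (\Pc \setminus \Ic)$ and handle the two pieces separately. The collection $\Ic$ is a nested chain exhausting $E$, so for any fixed $Q_0 \in \D(E)$ at most finitely many $Q_i$ are contained in $Q_0$, and in fact the $Q_i \subseteq Q_0$ form a strictly increasing chain; hence $\sum_{Q_i \in \Ic, Q_i \subseteq Q_0} \sigma(Q_i) \le \sum_{j \ge 0} 2^{-jn} \sigma(Q_0) \lesssim \sigma(Q_0)$ by the dyadic nesting and ADR (each step down the chain loses at least a fixed factor of measure, since $\ell(Q_i) \le \tfrac12 \ell(Q_{i+1})$ after passing to a subsequence, and ADR converts side lengths to measures). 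So it suffices to prove the packing bound for the stopping cubes $\Pc \setminus \Ic$, i.e.\ those produced by the iteration via conditions (a)--(b).

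For the stopping cubes, I would follow the standard $\tfrac12$-trick used in \cite[Section 6.1]{hytonenrosen}. Fix $Q_0 \in \D(E)$ and let $\{R_j\}$ enumerate the $\Pc$-proper subcubes of $Q_0$ (the maximal elements of $\Pc \setminus \Ic$ strictly inside $Q_0$ at which the averaged value of $N_*u$ has doubled relative to $Q_0$); these are pairwise disjoint. The key point is the geometric-decay estimate
\begin{align*}
  \sum_{P \in \Pc, P \subseteq Q_0} \sigma(P) \le \sigma(Q_0) + \sum_j \sum_{P \in \Pc, P \subseteq R_j} \sigma(P),
\end{align*}
so if I can show $\sum_j \sigma(R_j) \le \tfrac12 \sigma(Q_0)$ for \emph{every} $Q_0 \in \D(E)$, then iterating gives $\sum_{P \in \Pc, P \subseteq Q_0} \sigma(P) \le \sum_{k \ge 0} 2^{-k} \sigma(Q_0) = 2\sigma(Q_0)$, as desired. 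To get the $\tfrac12$ factor one uses the stopping condition \eqref{stopping_condition_1}: on each $R_j$ we have $M_\D(N_*u)(R_j) > 2 M_\D(N_*u)(Q_0) \ge 2 \fint_{Q_0} N_*u \, d\sigma$, which by definition of $M_\D(N_*u)(R_j)$ means there is a cube $R_j \subseteq \widetilde R_j \in \D$ with $\fint_{\widetilde R_j} N_*u \, d\sigma > 2 \fint_{Q_0} N_* u\,d\sigma$; by condition (b) (minimality) these $\widetilde R_j$ can be taken to be the maximal such cubes inside $Q_0$ and are therefore pairwise disjoint. A Calderón--Zygmund / weak-$(1,1)$ argument on $Q_0$ then bounds $\sum_j \sigma(\widetilde R_j) \le \tfrac12 \sigma(Q_0)$ (the threshold $2$ gives exactly the factor $\tfrac12$, up to replacing $2$ by a larger constant if one wants slack), and since $R_j \subseteq \widetilde R_j$ with the $\widetilde R_j$ disjoint, also $\sum_j \sigma(R_j) \le \tfrac12 \sigma(Q_0)$.

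The main technical obstacle is the bookkeeping around conditions (a) and (b): one must check that the cubes $\widetilde R_j$ realizing the doubled averages can genuinely be chosen maximal and hence disjoint, and that the finitely-generated initial collection $\Ic$ (versus a single maximal cube) does not spoil the iteration — in particular, when $Q_0$ itself is crossed by the chain $\Ic$, one should absorb the $\Ic$-contribution using the first paragraph and restart the stopping iteration below the largest $Q_i \subseteq Q_0$. Once the disjointness of the realizing cubes and the $\tfrac12$-decay are in hand, the geometric series closes the estimate. Alternatively, and perhaps more cleanly, one can invoke Theorem \ref{thm:sparse_carleson}: it suffices to exhibit disjoint sets $E_P \subset P$ with $\sigma(E_P) \gtrsim \sigma(P)$ for $P \in \Pc$, which for the stopping cubes is precisely the standard sparseness of a stopping-time family and for $\Ic$ follows from the chain structure; I would mention this as the streamlined route and carry out the explicit iteration only if a quantitative constant is wanted.
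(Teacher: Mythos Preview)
Your proposal is correct and follows essentially the same approach as the paper. Both arguments split off the chain $\Ic$ (the paper simply cites that $\Ic$ is Carleson with constant $C_\Ic$, while you observe the geometric decay of side lengths directly), establish the $\tfrac12$-decay $\sum_j \sigma(R_j)\le \tfrac12\sigma(Q)$ via the weak-$(1,1)$ bound for $M_\D$, and then iterate to sum a geometric series. The paper phrases the $\tfrac12$-step slightly more directly---observing that each $x\in Q'$ lies in $\{M_\D(1_Q N_*u)>2M_\D(N_*u)(Q)\}$ and applying weak-$(1,1)$, rather than passing through auxiliary maximal cubes $\widetilde R_j$---and organizes the iteration via chains $\Pc_Q^k$ rooted at $\Ic$-cubes rather than a direct recursion, but these are cosmetic differences. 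One small remark: the disjointness of your $\widetilde R_j$ comes simply from taking them maximal in the Calder\'on--Zygmund sense, not from condition~(b) as you suggest; condition~(b) is what guarantees the $R_j$ themselves are maximal (hence disjoint) and that the realizing cube can be found strictly inside $Q_0$.
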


\begin{proof}
  Let us start by noting that we may assume that $Q_0 \in \Pc$ since otherwise we can simply consider the $\Pc$-maximal subcubes of $Q_0$. To be more precise, the $\Pc$-maximal subcubes of $Q_0$ are disjoint by definition and thus, if we sum their measures together, it is at most $\sigma(Q_0)$. Now, if $Q \in \Pc$ and $Q \subset Q_0$, we know that $Q$ is one of the $\Pc$-maximal subsubes of $Q_0$ or it is contained properly in one of them. Hence, if we prove the estimate \eqref{estimate:Pc_carleson} for the case $Q_0 \in \Pc$, it implies the same estimate even with the same implicit constant for the case $Q_0 \notin \Pc$.

  Suppose first that we have a collection of disjoint cubes $Q' \subset Q$ that satisfy $M_{\D}(N_* u)(Q') > 2M_{\D}(N_* u)(Q)$. Then, for every such 
  cube $Q'$ we have $M_{\D}(N_* u)(Q') > \fint_Q N_*u \, d\sigma$ and thus, for every point $x \in Q'$ we get
  \begin{align*}
    M_{\D}(1_Q N_*u)(x) &= \sup_{R \in \D, x \in R \subseteq Q} \fint_R N_*u \, d\sigma \\
                        &\ge \sup_{R \in \D, Q' \subseteq R \subsetneq Q} \fint_R N_*u \, d\sigma
                        = M_{\D}(N_*u)(Q')
                        > 2M_{\D}(N_*u)(Q).
  \end{align*}
  In particular, by the $L^1 \to L^{1,\infty}$ boundedness of $M_\D$ we have
  \begin{align}
    \nonumber \sum_{Q'} \sigma(Q') &\le \sigma\left(\left\{x \in E \colon M_{\D}(1_Q N_*u)(x) > 2M_{\D}(N_*u)(Q) \right\}\right) \\
    \label{apu1}                   &\le \frac{1}{ 2M_{\D}(N_*u)(Q)} \|1_Q N_*u\|_{L^1(\sigma)}
                                    = \frac{\fint_Q N_*u \, d\sigma}{M(N_* u)(Q)} \frac{\sigma(Q)}{2} \le \frac{\sigma(Q)}{2}.
  \end{align}
  
  We notice that if $R \in \Pc \setminus \Ic$, then $R$ is a $\Pc$-proper subcube of some cube $Q \in \Pc$. To be more precise, 
  if $R \in \Pc \setminus \Ic$, then there exists a chain of cubes 
  $R = R_1 \subsetneq R_2 \subsetneq \ldots \subsetneq R_k$, $R_i \in \Pc$, such that for every $i = 1,2,\ldots,k-1$ 
  $R_i$ is a $\Pc$-proper subcube of $R_{i+1}$ and $R_k \in \Ic$. If such a chain of length $k$ from $R$ to $Q$ exists, 
  we denote $R \in \Pc_Q^k$. By using the property \eqref{apu1} $k$ times, we see that for each $Q \in \Pc$ we have
  \begin{align}
    \label{apu2} \sum_{R \in \Pc_Q^k} \sigma(R) \le \sum_{R \in \Pc_Q^{k-1}} \sum_{S \in \Pc_Q^k, S \subsetneq R} \sigma(S)
                                                \le \frac{1}{2} \sum_{R \in \Pc_Q^{k-1}} \sigma(R) 
                                                \le \ldots
                                \le \frac{1}{2^{k-1}} \sum_{R \in \Pc_Q^1} \sigma(R) \le \frac{\sigma(Q)}{2^k}.
  \end{align}
  Now it is straightforward to prove the packing condition. We have
  \begin{align*}
    \sum_{P \in \Pc, P \subseteq Q_0} \sigma(P) \ &= \ \sum_{P \in \Ic, P \subseteq Q_0} \sigma(P) + \sum_{P \in \Pc \setminus \Ic, P \subseteq Q_0} \sigma(P) \\
                                                \ &\le \ C_\Ic \sigma(Q_0) + \sum_{Q \in \Ic, Q \subseteq Q_0} \sum_{k=1}^\infty \sum_{P \in \Pc_Q^k} \sigma(P) \\
                                                \ &\overset{\eqref{apu2}}{\le} \ C_\Ic \sigma(Q_0) + \sum_{Q \in \Ic, Q \subseteq Q_0} \sum_{k=1}^\infty \frac{\sigma(Q)}{2^k} \\
                                                \ &= C_\Ic \sigma(Q_0) + \sum_{Q \in \Ic, Q \subseteq Q_0} \sigma(Q) \\
                                                &\le C_\Ic \sigma(Q_0) + C_\Ic \sigma(Q_0)
  \end{align*}
  which proves the claim.
\end{proof}

\section{``Large Oscillation'' cubes}

Before constructing the approximating function, we consider two collections of cubes that will act as the basis of our construction. In this section, we show that 
the union of the collection of ``large oscillation'' cubes
\begin{align*}
  \mathcal{R} \coloneqq \left\{Q \in \D \colon \osc{U_Q^i} u > \varepsilon M_\D(N_*u)(Q) \text{ for some } i\right\}.
\end{align*}
and the collection of ``bad'' cubes from the corona decomposition satisfies a Carleson packing condition. We apply this property in the technical estimates in Section \ref{section:construction}.

\begin{lemma}
  \label{lemma:oscillation_carleson}
  For every $Q_0 \in \D(E)$ we have
  \begin{align}
    \label{estimate:oscillation_carleson} \sum_{R \in \Rc, R \subseteq Q_0} \sigma(R) \lesssim \frac{1}{\varepsilon^2} \sigma(Q_0).
  \end{align}
\end{lemma}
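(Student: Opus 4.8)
The plan is to show that $\Rc$ (together with the bad cubes, which are Carleson by the bilateral corona decomposition) packs into $\sigma(Q_0)$ up to a factor $\eps^{-2}$. The key point is that oscillation of a harmonic function over a Whitney region can be controlled by the square function $Su$, and then one invokes the local $L^2$ bound $\|Su\|_{L^2(\Delta_Q)} \lesssim \ell(Q)^{n/2} M_\D(N_*u)(Q)$ (which holds in the UR setting by \cite{hofmannmartellmayboroda_square}) to sum. By Remark \ref{remark:simplification} it suffices to prove $I_{Q_0} := \sum_{Q \in \Rc,\, \pi_\Pc Q = \pi_\Pc Q_0} \sigma(Q) \lesssim \eps^{-2}\sigma(Q_0)$ for $Q_0 \in \Pc$; combined with Lemma \ref{lemma:p-carleson} this gives the full packing bound. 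This reduction is exactly what lets us replace the uncontrolled $M_\D(N_*u)(Q)$ by $2M_\D(N_*u)(Q_0)$ for every relevant $Q$.

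First I would observe that since $u$ is harmonic, it is smooth in $\Omega$, and on the fattened Whitney region $U_Q^i$ (whose diameter is $\approx \ell(Q)$ and which sits at distance $\approx \ell(Q)$ from $E$) interior estimates give $\osc{U_Q^i} u \lesssim \ell(Q)\, \sup_{U_Q^*}|\nabla u|$ for a slightly fattened region $U_Q^*$; Caccioppoli then yields $\osc{U_Q^i} u \lesssim \big(\ell(Q)^{1-n}\iint_{U_Q^{**}} |\nabla u(Y)|^2\, dY\big)^{1/2}$, using $\delta(Y)\approx \ell(Q)$ on the Whitney region to insert the weight $\delta(Y)^{1-n}$. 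Squaring and summing over $Q \in \Rc$ with $\pi_\Pc Q = \pi_\Pc Q_0$, and using the bounded overlap of the regions $U_Q^{**}$ together with the fact that each such $U_Q^{**}$ lies in the cone $\Gamma(x)$ for $x$ in a fixed fraction of $\Delta_Q$ (and all these $\Delta_Q$ lie inside a fixed dilate of $Q_0$), I would get
\begin{align*}
  \sum_{\substack{Q \in \Rc \\ \pi_\Pc Q = \pi_\Pc Q_0}} \iint_{U_Q^{**}} |\nabla u(Y)|^2 \delta(Y)^{1-n}\, dY \ \lesssim\ \int_{C\Delta_{Q_0}} Su(x)^2 \, d\sigma(x) \ \lesssim\ \ell(Q_0)^n \big(M_\D(N_*u)(Q_0)\big)^2,
\end{align*}
where the last step is the local square-function estimate. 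The definition of $\Rc$ gives the reverse-type inequality $\sigma(Q) \lesssim \eps^{-2} \big(M_\D(N_*u)(Q)\big)^{-2} \ell(Q)\iint_{U_Q^{**}}|\nabla u|^2\delta^{1-n}$, wait—more precisely $\eps^2 \big(M_\D(N_*u)(Q)\big)^2 \sigma(Q) \lesssim \big(\osc{U_Q^i}u\big)^2 \sigma(Q) \lesssim \iint_{U_Q^{**}}|\nabla u(Y)|^2\delta(Y)^{1-n}\,dY$ (using $\sigma(Q)\approx\ell(Q)^n$ and the $\delta\approx\ell(Q)$ comparison), and then replacing $M_\D(N_*u)(Q)$ by the comparable $M_\D(N_*u)(Q_0)$ via Remark \ref{remark:simplification} and dividing through gives $I_{Q_0}\lesssim \eps^{-2}\sigma(Q_0)$.

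The main obstacle I expect is the bookkeeping needed to pass from the per-cube oscillation bound to a single integral of $Su$ over a bounded region: one must check that the fattened Whitney boxes used to bound $\osc u$ are still contained in the cones $\widehat{\mathbf U}_Q$ defining $Su$ (this is why the construction introduced the triply-fattened regions $I^{***}$), that the overlap of the $U_Q^{**}$ across all $Q$ with $\pi_\Pc Q = \pi_\Pc Q_0$ is genuinely bounded, and that the relevant cones $\Gamma(x)$ only need to be integrated over a fixed dilate of $Q_0$ — for this last point one uses that every $Q$ in the sum satisfies $Q \subset Q_0$ or at least $\ell(Q)\lesssim\ell(Q_0)$ with $\Delta_Q$ meeting $C\Delta_{Q_0}$, which holds because $\pi_\Pc Q = \pi_\Pc Q_0 \subseteq$ (some cube comparable to) $Q_0$. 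Once the geometry is set up correctly, invoking the local $S\lesssim N$ estimate of \cite{hofmannmartellmayboroda_square} and the Carleson property of $\Bc$ from the bilateral corona decomposition closes the argument; the $\eps^{-2}$ is the only quantitative loss and it comes solely from the threshold defining $\Rc$.
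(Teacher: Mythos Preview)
Your overall strategy is right in spirit --- reduce via Remark \ref{remark:simplification}, use the oscillation--gradient estimate on Whitney regions, and close with a square-function bound --- but there is a genuine gap in the final step, and it is precisely the step the paper works hardest to handle.

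First, a bookkeeping issue: the displayed inequality $\eps^2\big(M_\D(N_*u)(Q)\big)^2\sigma(Q)\lesssim \iint_{U_Q^{**}}|\nabla u|^2\delta^{1-n}$ is off by a factor of $\sigma(Q)\approx\ell(Q)^n$. The oscillation estimate gives $(\osc{U_Q^i}u)^2\lesssim \iint_{\widehat U_Q}|\nabla u|^2\delta^{1-n}$, so after multiplying by $\sigma(Q)$ you get $(\osc{U_Q^i}u)^2\sigma(Q)\lesssim \iint_{\widehat U_Q}|\nabla u|^2\,\delta$, with weight $\delta$ rather than $\delta^{1-n}$. Relatedly, your claimed comparison $\sum_Q\iint_{U_Q^{**}}|\nabla u|^2\delta^{1-n}\lesssim\int_{C\Delta_{Q_0}}Su^2$ fails dimensionally: by Fubini $\int Su^2\approx \iint|\nabla u|^2\,\delta$, so the left side (which after bounded overlap is $\iint|\nabla u|^2\delta^{1-n}$) can be much larger near $E$.

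The more serious problem is the last inequality in your display, $\int_{C\Delta_{Q_0}}Su^2\lesssim \ell(Q_0)^n\big(M_\D(N_*u)(Q_0)\big)^2$. The local $S\lesssim N$ estimate from \cite{hofmannmartellmayboroda_square} yields $\int_{C\Delta_{Q_0}}Su^2\lesssim\int_{C'\Delta_{Q_0}}(N_*u)^2$, and there is no reason for $\int_{C'\Delta_{Q_0}}(N_*u)^2$ to be bounded by $\sigma(Q_0)\big(M_\D(N_*u)(Q_0)\big)^2$: the latter is the square of an \emph{average} of $N_*u$, and Cauchy--Schwarz goes the wrong way. The principal-cube reduction controls $|u|$ on the sawtooth $\bigcup_Q\widehat U_Q$, but not $N_*u$ pointwise on all of $C'\Delta_{Q_0}$, and the local $S\lesssim N$ bound is not confined to that sawtooth.

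The paper's proof repairs exactly this. After the reduction via Remark \ref{remark:simplification}, it makes a \emph{further} reduction to a single stopping time regime $\Sc_0$ (using the Carleson packing of the maximal cubes $\{Q(\Sc)\}$), so that the relevant cubes form a semicoherent subregime of $\Sc_0$. By \cite[Lemma 3.24]{hofmannmartellmayboroda} the associated sawtooth $\Omega^{***}$ is then a chord-arc domain with UR boundary. One replaces $\delta(Y)$ by $D(Y)=\dist(Y,\partial\Omega^{***})$ (showing $\delta\lesssim D$ on the fattened Whitney pieces) and applies the Carleson measure estimate \cite[Theorem 1.1]{hofmannmartellmayboroda} \emph{on the sawtooth domain itself}, obtaining $\iint_{\Omega^{***}}|\nabla u|^2 D\lesssim\|u\|_{L^\infty(\Omega^{***})}^2\sigma(Q_0)$. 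The point is that $\|u\|_{L^\infty(\Omega^{***})}\lesssim M_\D(N_*u)(\pi_\Pc Q_0)$ holds precisely because $\Omega^{***}$ is contained in cones over cubes with $\pi_\Pc Q=\pi_\Pc Q_0$; this is what replaces your missing control of $N_*u$ on the enlarged ball.
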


\begin{proof}
We break the proof into three parts.

\textbf{Part 1: Simplification.} First, by Remark \ref{remark:simplification}, it is enough to show that
\begin{align*}
  \underset{\pi_\Pc R = \pi_\Pc Q_0}{\sum_{R \in \Rc, R \subset Q_0}} \sigma(R) \lesssim \frac{1}{\eps^2} \sigma(Q_0).
\end{align*}
Also, since the ``bad'' collection in the bilateral corona decomposition is Carleson, it suffices to consider the ``good'' cubes in $\Rc$, i.e. the collection $\Rc \cap \mathcal{G}$.
Thus, we may assume that $Q_0 \in \Rc \cap \mathcal{G}$ since otherwise we may simply consider the $(\Rc \cap \mathcal{G})$-maximal subcubes of $Q_0$ similarly as with the collection $\Pc$ in the proof of Lemma \ref{lemma:p-carleson}. Furthermore, since the Whitney regions $U_R$ of the ``good'' cubes $R$ break into two components $U_R^+$ and $U_R^-$, it is enough to 
bound the sum
\begin{align*}
  \underset{\pi_\Pc R = \pi_\Pc Q_0}{\sum_{R \in \Rc^+, R \subset Q_0}} \sigma(R) \lesssim \sigma(Q_0),
\end{align*}
where $\Rc^+ \coloneqq \{Q \in \Rc \cap \mathcal{G} \colon \text{osc}_{U_Q^+} > \varepsilon M_\D(N_*u)(Q) \}$, as the arguments for the corresponding collection $\Rc^-$ are the same.

Since $Q_0 \in \Gc$, there exists a stopping time regime $\Sc_0 = \Sc_0(Q_0)$ such that $Q_0 \in \Sc_0$. We note that if we have $Q \subset Q_0$ for a cube $Q \in \Rc^+$, then either $Q \in \Sc_0$ or,
by the coherency and disjointness of the stopping time regimes, $Q_0 \in \Sc$ for such a $\Sc$ that $Q(\Sc) \subsetneq Q_0$. Let $\mathfrak{S} = \mathfrak{S}(Q_0)$ be the 
collection of the stopping time regimes $\Sc$ such that $Q(\Sc) \subsetneq Q_0$. Then we have
\begin{align*}
  \underset{\pi_\Pc R = \pi_\Pc Q_0}{\sum_{R \in \Rc^+, R \subset Q_0}} \sigma(R) 
  &= \underset{\pi_\Pc R = \pi_\Pc Q_0}{\sum_{R \in \Rc^+ \cap \Sc_0, R \subset Q_0}} \sigma(R) +
    \sum_{\Sc \in \mathfrak{S}} \underset{\pi_\Pc R = \pi_\Pc Q_0}{\sum_{R \in \Rc^+ \cap \Sc, R \subset Q_0}} \sigma(R) \\
  &\eqqcolon I_{Q_0} + II_{Q_0}.
\end{align*}
Let us show that if $I_{Q_0} \lesssim \sigma(Q_0)$ for every $Q_0 \in \D$, then $II_{Q_0} \lesssim \sigma(Q_0)$ for every $Q_0 \in \D$. 
Suppose that $Q \in \Sc \in \mathfrak{S}$. Since $Q(\Sc) \subsetneq Q_0$, we 
have $\pi_\Pc Q = \pi_\Pc Q_0$ only if $\pi_\Pc Q = \pi_\Pc Q(S) = \pi_\Pc Q_0$. Thus, it holds that
\begin{align*}
  II_{Q_0} = \sum_{\Sc \in \mathfrak{S}} \underset{\pi_\Pc R = \pi_\Pc Q_0}{\sum_{R \in \Rc^+ \cap \Sc, R \subset Q_0}} \sigma(R) 
           \le \sum_{\Sc \in \mathfrak{S}} \underset{\pi_\Pc R = \pi_\Pc Q(\Sc)}{\sum_{R \in \Rc^+ \cap \Sc, R \subset Q_0}} \sigma(R) 
           = \sum_{\Sc \in \mathfrak{S}} I_{Q(\Sc)} 
           \lesssim \sum_{\Sc \in \mathfrak{S}} \sigma(Q(\Sc)) 
           \lesssim \sigma(Q_0)
\end{align*}
by the Carleson packing property of the collection $\{Q(\Sc)\}_\Sc$. Hence, to prove \eqref{estimate:oscillation_carleson}, it suffices to show $I_{Q_0} \lesssim \sigma(Q_0)$.

\

\textbf{Part 2: $\delta(Y) \lesssim D_\mathcal{A}(Y)$ in $\widehat{U}_P^+$.} Let $\mathcal{A} \subset \Gc$ be a collection of cubes and set
\begin{align*}
  \Omega_\mathcal{A}^* \coloneqq \text{int} \left( \bigcup_{Q \in \mathcal{A}} \widehat{\textbf{U}}_Q^+ \right)
                     = \text{int} \left( \bigcup_{Q \in \mathcal{A}} \bigcup_{I \in \mathcal{W}_Q^+} I^{***} \right)
\end{align*}
and $D_\mathcal{A}(Y) \coloneqq \dist(Y,\partial \Omega_\mathcal{A}^*)$. Recall the definitions of $I^{**}$ and $I^{***}$ from Section \ref{subsection:non-tangential}.
Let us fix a cube $P \in \mathcal{A}$ and a point $Y \in \widehat{U}_P^+ = \bigcup_{I \in \Wc_P^+} I^{**}$.
We now claim that $\delta(Y) \lesssim D_\mathcal{A}(Y)$ . 
We notice first that although the regions $\widehat{\textbf{U}}_Q^+$ may overlap, we have $\ell(Q) \approx \ell(Q') \approx \ell(P)$ for all
overlapping regions $\widehat{\textbf{U}}_Q^+$ and $\widehat{\textbf{U}}_{Q'}^+$ such that 
$Y \in \widehat{\textbf{U}}_Q^+ \cap \widehat{\textbf{U}}_{Q'}^+$ (see (3.2), (3.8) and related estimates in \cite{hofmannmartellmayboroda}). 
Also, the fattened Whitney boxes $I^{***}$ may overlap, but we have
$\ell(I^{***}) \approx \ell(I) \approx \ell(J) \approx \ell(J^{***}) \approx  \ell(P)$ if 
$Y \in I^{***} \cap J^{***}$. By a simple geometrical consideration we know that
$$
  \dist(Y,\partial I^{***}) \approx_{\tau}  \ell(I). 
$$
It now holds that $D_\mathcal{A}(Y) = \dist(Y,\partial I^{***})$ for some $I^{***} \ni Y$ or 
$D_\mathcal{A}(Y) \ge \dist(Y,\partial I^{***})$ for every such $I^{***}$. In particular, we have 
\begin{align*}
  D_\mathcal{A}(Y) &\ge \inf_{Q \in \mathcal{A}, Y \in \widehat{\textbf{U}}_Q^+} \inf_{I \in \mathcal{W}_Q^+} \dist(Y,\partial I^{***}) \\
                   &\approx \inf_{Q \in \mathcal{A}, Y \in \widehat{\textbf{U}}_Q^+} \inf_{I \in \mathcal{W}_Q^+} \ell(I)
                   \approx \inf_{Q \in \mathcal{A}, Y \in \widehat{\textbf{U}}_Q^+} \ell(Q)
                   \approx \ell(P).
\end{align*}
Now we can take any $I \in \Wc_P^+$ such that $Y \in I^{**}$ and notice that 
$\ell(P) \approx \ell(I) \approx \ell(I^{**}) \approx \text{dist}(I^{**},\partial \Omega) \approx \dist(Y, \partial \Omega)$.
Hence $D_\mathcal{A}(Y) \gtrsim \delta(Y)$ for every $Y \in \widehat{U}_P^+$.

\

\textbf{Part 3: The sum $I_{Q_0}$.} To simplify the notation, let us write
\begin{align*}
  \Rc_0^+ \coloneqq \{R \in \Rc^+ \cap \Sc_0 \colon R \subset Q_0, \pi_\Pc R = \pi_\Pc Q_0\}.
\end{align*}
We consider the region $\Omega^{***}$,
$$
\Omega^{***} := \text{int} \left( \bigcup_{R \in \Rc_0^+} \widehat{\textbf{U}}_R^+\right)
$$
and set $D(Y) \coloneqq \text{dist}(Y,\partial \Omega^{***})$ for every $Y \in \Omega$. Suppose that $R \in \Rc_0^+$. 
By Part 2, we know that
\begin{align}
  \label{distance_estimate} \delta(Y) \lesssim D(Y) \ \ \ \ \text{for every } Y \in \widehat{U}_R^+.
\end{align}
We also notice that
\begin{align*}
  \Omega^{***} = \text{int} \left(\bigcup_{R \in \Rc_0^+} \widehat{\textbf{U}}_R^+ \right)
               \subset \text{int} \left( \bigcup_{R \in \Rc_0^+} \bigcup_{x \in R} \Gamma(x) \right),
\end{align*}
so we have
\begin{align}
  \nonumber \sup_{X \in \Omega^{***}} |u(X)|
  = \sup_{R \in \Rc_0^+} \sup_{X \in \widehat{\textbf{U}}_R^+} |u(X)|
  &\le \sup_{R \in \Rc_0^+} \inf_{x \in R} N_*u(x) \\
  \label{control_in_omega***} &\le \sup_{R \in \Rc_0^+} M_\D(N_*u)(R)
  \lesssim M_\D(N_*u)(\pi_\Pc Q_0).
\end{align}
In the last inequality we used the definition of $\Rc_0^+$ (see Remark \ref{remark:simplification}).

By \cite[(5.8)]{hofmannmartellmayboroda} (or \cite[Section 4]{hofmannmartell}), we have
\begin{align}
  \label{estimate:oscillation_square} \left( \osc{U_R^+} u \right)^2 \lesssim \ell(R)^{-n} \iint_{\widehat{U}_R^+} |\nabla u(Y)|^2 \delta(Y) \, dY
\end{align}
for every $R \in \Rc^+$. Notice also that if $R \in \Rc_0^+$, then by the definition of the numbers $M_D(N_*u)(Q)$ we have $M_\D(N_*u)(\pi_\Pc Q_0) \le M_\D(N_*u)(R)$ simply because $R \subset \pi_\Pc Q_0$. Thus, using (A) the definition of the numbers $M_\D(N_*u)(Q)$, (B) the 
ADR property of $E$, (C) the definition of the collection $\Rc^+$ and (D) the bounded overlap of the regions $\widehat{U}_R^+$ we get
\begin{align}
  \label{inequality_chain1} M_\D(N_*u)(\pi_\Pc Q_0)^2 I_{Q_0}
  &\overset{\text{(A)}}{\le} \sum_{R \in \Rc_0^+} M_\D(N_*u)(R)^2 \sigma(R) \\
  \nonumber &\overset{\text{(B)}}{\lesssim} \sum_{R \in \Rc_0^+} M_\D(N_*u)(R)^2 \ell(R)^n \\
  \nonumber &\overset{\text{(C)}, \eqref{estimate:oscillation_square}}{\lesssim} \frac{1}{\varepsilon^2} \sum_{R \in \Rc_0^+} \iint_{\widehat{U}_R^+} |\nabla u(Y)|^2 \delta(Y) \, dY  \\
  \nonumber &\overset{\eqref{distance_estimate}}{\lesssim} \frac{1}{\varepsilon^2} \sum_{R \in \Rc_0^+} \iint_{\widehat{U}_R^+} |\nabla u(Y)|^2 D(Y) \, dY  \\
  \nonumber &\overset{\text{(D)}}{\lesssim} \frac{1}{\varepsilon^2} \iint_{\Omega^{***}} |\nabla u(Y)|^2 D(Y) \, dY
\end{align}
Since $Q_0 \in \Rc$, we notice that the collection $\Rc_0^+$ forms a 
semi-coherent subregime of $\Sc_0$. Thus, by \cite[Lemma 3.24]{hofmannmartellmayboroda}, the set $\Omega^{***}$ is a chord-arc domain (i.e. NTA domain with ADR 
boundary). Furthermore, by \cite[Theorem 1.2]{azzametal}, $\partial \Omega^{***}$ is UR. Since $\Omega^{***} \subset B(x_{Q_0}, C\ell(Q_0))$ 
for a suitable structural constant $C$ (see \cite[(3.14)]{hofmannmartellmayboroda}), the ADR property of $\partial \Omega$ and \cite[Theorem 1.1]{hofmannmartellmayboroda}
give us
\begin{align}
  \label{inequality_chain2} \frac{1}{\varepsilon^2} \iint_{\Omega^{***}} |\nabla u(Y)|^2 D(Y) \, dY 
  \lesssim \frac{1}{\varepsilon^2} \|u\|_{L^\infty(\Omega^{***})}^2 \cdot \sigma(Q_0) 
  \overset{\eqref{control_in_omega***}}{\lesssim} \frac{1}{\varepsilon^2} M_\D(N_*u)(\pi_\Pc Q_0)^2 \cdot \sigma(Q_0).
\end{align}
Since the numbers $M_\D(N_*u)(\pi_\Pc Q_0)^2$ cancel from \eqref{inequality_chain1} and \eqref{inequality_chain2}, this concludes the proof of the lemma.
\end{proof}

Since the bad collection $\Bc$ in the bilateral corona decomposition satisfies a Carleson packing condition, we immediately get the following corollary:

\begin{corollary}
  \label{corollary:bad_cubes_carleson}
  For every $Q_0 \in \D(E)$ we have
  \begin{align}
    \label{corollary:carleson_bad_oscillation} \sum_{R \in (\Rc \cup \Bc), R \subseteq Q_0} \sigma(R) \lesssim \frac{1}{\varepsilon^2} \sigma(Q_0).
  \end{align}
\end{corollary}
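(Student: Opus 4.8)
The plan is to deduce this directly from Lemma \ref{lemma:oscillation_carleson} together with the Carleson packing estimate for the ``bad'' collection $\Bc$ recorded in property (2) of the bilateral corona decomposition. First I would fix $Q_0 \in \D(E)$ and split the sum over those $R \in \Rc \cup \Bc$ with $R \subseteq Q_0$ into the contribution from $\Rc$ and the contribution from $\Bc$. Since we are summing the nonnegative quantities $\sigma(R)$, discarding the requirement $R \in \Bc$ (resp.\ $R \in \Rc$) in one of the two pieces only increases it, so
\[
  \sum_{R \in (\Rc \cup \Bc),\, R \subseteq Q_0} \sigma(R) \ \le \ \sum_{R \in \Rc,\, R \subseteq Q_0} \sigma(R) \ + \ \sum_{R \in \Bc,\, R \subseteq Q_0} \sigma(R).
\]

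Next I would bound the first sum by $C \varepsilon^{-2} \sigma(Q_0)$ using \eqref{estimate:oscillation_carleson}, and the second sum by $C_{\eta,K}\sigma(Q_0)$ using property (2) of the bilateral corona decomposition: the term $\sum_{Q' \subset Q_0,\, Q' \in \Bc} \sigma(Q')$ appearing there is precisely what we need, and we may simply drop the nonnegative term $\sum_{\Sc:\, Q(\Sc) \subset Q_0} \sigma(Q(\Sc))$. Finally, since $\varepsilon \in (0,1)$ we have $1 \le \varepsilon^{-2}$, hence $C_{\eta,K}\sigma(Q_0) \le C_{\eta,K}\varepsilon^{-2}\sigma(Q_0)$; adding the two bounds yields \eqref{corollary:carleson_bad_oscillation} with an implicit constant depending only on structural parameters and on the fixed corona parameters $\eta, K$.

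There is essentially no obstacle here. The only point worth making explicit is that the collections $\Rc$ and $\Bc$ are not assumed disjoint, but this is harmless: the subadditivity step above uses only monotonicity of a sum of nonnegative terms, not disjointness. No new geometry, stopping-time construction, or square-function estimate is needed beyond what has already been established in Lemma \ref{lemma:oscillation_carleson} and the corona decomposition.
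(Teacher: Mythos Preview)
Your proposal is correct and matches the paper's approach exactly: the paper simply remarks that since $\Bc$ satisfies a Carleson packing condition (from the bilateral corona decomposition), the corollary follows immediately from Lemma~\ref{lemma:oscillation_carleson}. Your write-up just makes explicit the splitting into $\Rc$ and $\Bc$ and the absorption of the $\Bc$-constant into $\eps^{-2}$ via $\eps<1$, which is precisely the intended one-line argument.
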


\section{Generation cubes}

For every stopping time regime $\Sc$, we construct a collection of \emph{generation cubes} $G(\Sc)$ as in \cite[Section 5]{hofmannmartellmayboroda} but with modified stopping conditions. For clarity, let us repeat the key details and definitions from \cite[Section 5]{hofmannmartellmayboroda} here. We set $Q^0 \coloneqq Q(\Sc)$ and $G_0 \coloneqq \{Q^0\}$, start subdividing $Q^0$ dyadically and stop when we reach a cube $Q \in \D_{Q^0}$ for which at least one of the following conditions holds:
\begin{enumerate}
  \item[(1)] $Q$ is not in $\Sc$,
  \item[(2)] $|u(Y_Q^+) - u(Y_{Q^0}^+)| > \varepsilon M_\D(N_*u)(Q)$,
  \item[(3)] $|u(Y_Q^-) - u(Y_{Q^0}^-)| > \varepsilon M_\D(N_*u)(Q)$.
\end{enumerate}
The points $Y_Q^{\pm}$ were defined in Section \ref{subsection:whitney_regions}. We denote the collection of maximal subcubes of $Q^0$ extracted by these stopping time conditions by $\Fc_1 = \Fc_1(Q^0)$ and we let $G_1 = G_1(Q^0) \coloneqq \Fc_1 \cap \Sc$ be the collection of \emph{first generation cubes}. We notice that the collection of subcubes of $Q^0$ that are not contained in any stopping cube $Q \in \Fc_1$ form a semicoherent subregime of $\Sc$. We denote this subregime by $\Sc' = \Sc'(Q^0)$.

If $G_1$ is non-empty, we repeat the construction above for the cubes $Q^1 \in G_1$ but replace $Y_{Q^0}^\pm$ by $Y_{Q^1}^\pm$ in conditions (2) and (3). Continuing like this gives us collections $G_k$ for $k \ge 0$ (notice that starting from some $k$ the collections might be empty), where
\begin{align*}
  G_{k+1}(Q^0) \coloneqq \bigcup_{Q^k \in G_k(Q^0)} G_1(Q^k).
\end{align*}
To emphasize the dependency on $\Sc$, we denote
\begin{align*}
  G_k(\Sc) \coloneqq G_k(Q(\Sc)),
\end{align*}
and we set the collection of all generation cubes to be
\begin{align*}
  G^* \coloneqq \bigcup_{\Sc} G(\Sc).
\end{align*}
By this construction, we have
\begin{align}
  \label{decomposition:stopping_time_regimes} \Sc = \bigcup_{Q \in G(\Sc)} \Sc'(Q)
\end{align}
for each stopping time regime $\Sc$, where $\Sc'(Q)$ is a semicoherent subregime of $\Sc$ with maximal element $Q$ and the subregimes $\Sc'(Q)$ are disjoint.

Our next goal is to prove that the collection $G^*$ satisfies a Carleson packing condition:
\begin{lemma}
  \label{lemma:stopping_carleson}
  For every $Q_0 \in \D$ we have
  \begin{align}
    \sum_{S \in G^*, S \subseteq Q_0} \sigma(S) \lesssim \frac{1}{\eps^2} \sigma(Q_0).
  \end{align}
\end{lemma}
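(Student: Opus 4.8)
The plan is to decompose the sum over $G^*$ along the stopping time regimes $\Sc$ and then, within each regime, to bound the local contribution $\sum_{S \in G(\Sc), S \subseteq Q_0} \sigma(S)$ by the measure of the maximal cube $Q(\Sc)$ of that regime; since the collection $\{Q(\Sc)\}_\Sc$ is Carleson by the bilateral corona decomposition, the global estimate will follow after one application of Remark~\ref{remark:simplification}. Concretely, following the scheme of that remark it suffices to prove $\sum_{S \in G^*, S \subseteq Q_0, \pi_\Pc S = \pi_\Pc Q_0} \sigma(S) \lesssim \tfrac{1}{\eps^2}\sigma(Q_0)$, and since each $S \in G^*$ lies in a unique regime $\Sc$ with $S \subseteq Q(\Sc)$, the same reduction that was used in Part~1 of the proof of Lemma~\ref{lemma:oscillation_carleson} lets us assume $Q_0 = Q(\Sc_0)$ for a single regime $\Sc_0$ and bound only $\sum_{S \in G(\Sc_0), S \subseteq Q_0} \sigma(S)$, the contributions of the regimes $\Sc$ with $Q(\Sc) \subsetneq Q_0$ being controlled by the Carleson packing of $\{Q(\Sc)\}_\Sc$.

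The core is therefore the single-regime estimate: fix $\Sc_0$ with maximal cube $Q^0 = Q(\Sc_0)$ and show $\sum_{k \ge 0} \sum_{Q \in G_k(\Sc_0)} \sigma(Q) \lesssim \tfrac{1}{\eps^2}\sigma(Q^0)$. Here I would argue generation by generation. The cubes in $G_{k+1}$ that sit below a fixed $Q^k \in G_k$ are the maximal subcubes $Q \subsetneq Q^k$ stopped by one of conditions (1)--(3). Condition (1) cubes — those leaving $\Sc_0$ — are handled by the Carleson packing of the ``bad'' part of the corona decomposition together with the packing of $\{Q(\Sc)\}_\Sc$ (a cube leaves $\Sc_0$ only through a child that belongs to $\Bc$ or is the maximal cube of a new regime), so these contribute at most $C_{\eta,K}\sigma(Q^0)$ in total over all generations. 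For the cubes stopped by (2) or (3): if $Q$ is such a cube with parent-generation cube $Q^k$, then $|u(Y_Q^\pm) - u(Y_{Q^k}^\pm)| > \eps M_\D(N_*u)(Q) \ge \eps M_\D(N_*u)(Q^0)$ (the last inequality by monotonicity of $M_\D(N_*u)(\cdot)$ under inclusion), while $Y_Q^\pm = X_{\widetilde Q}^\pm$ lies in $\widehat{\mathbf U}_{\widetilde Q}^\pm$ and hence the oscillation of $u$ over the relevant Whitney region sees a jump of size $\gtrsim \eps M_\D(N_*u)(Q^0)$; summing $\sigma(Q)\ell(Q)^{-n}$-weighted square-oscillation estimates \eqref{estimate:oscillation_square} over these cubes and invoking exactly the chord-arc/Carleson-measure machinery from \eqref{inequality_chain1}--\eqref{inequality_chain2} (the region $\Omega^{***}$ built from the stopped cubes is chord-arc, $u$ is bounded on it by $\lesssim M_\D(N_*u)(Q^0)$, and the global square function bound of \cite{hofmannmartellmayboroda} applies) yields $\sum \sigma(Q) \lesssim \tfrac{1}{\eps^2}\sigma(Q^0)$.

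An alternative, and probably cleaner, route for the (2)--(3) cubes is to observe directly that such cubes are contained in $\Rc \cup \Bc$: if $u$ oscillates by more than $\eps M_\D(N_*u)(Q)$ between $Y_Q^\pm$ and $Y_{Q^k}^\pm$, then since both points lie in Whitney regions attached to $Q$ or to cubes comparable to it, one gets $\osc{U_Q^i} u \gtrsim \eps M_\D(N_*u)(Q)$ for some $i$ after possibly rescaling $\eps$, so that $Q \in \Rc$; the generation cubes stopped by (1) lie in (or just above) $\Bc$. Then Corollary~\ref{corollary:bad_cubes_carleson} applies verbatim and gives the claim with the stated $\tfrac{1}{\eps^2}$ dependence, modulo the observation that $G^*$ is the union over regimes and the extra $\{Q(\Sc)\}_\Sc$ packing. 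The main obstacle I anticipate is the bookkeeping needed to pass from ``$u$ oscillates between the center points $Y_Q^\pm$ of the parent regions'' to ``$u$ has large oscillation on a single Whitney region $U_Q^i$'' — i.e. verifying that conditions (2)--(3) really do force membership in $\Rc$ (with a harmless adjustment of constants), which requires care with which fattened region the points $X_{\widetilde Q}^\pm$ sit in and with the relation $\ell(\widetilde Q)\approx\ell(Q)$; if that comparison is not clean enough one falls back on the self-contained square-function argument mirroring Lemma~\ref{lemma:oscillation_carleson}.
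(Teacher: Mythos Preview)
Your overall architecture matches the paper's: reduce via Remark~\ref{remark:simplification} and the Carleson packing of $\{Q(\Sc)\}_\Sc$ to a single regime $\Sc_0$, then control the generation cubes inside $\Sc_0$ by a square-function Carleson estimate on a chord-arc sawtooth, cancelling the factor $M_\D(N_*u)(\pi_\Pc Q_0)^2$ at the end. However, both of your proposed routes for the core step have real problems.

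\medskip

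\textbf{Route 1 (the square-function argument).} The gap is in the sentence ``the oscillation of $u$ over the relevant Whitney region sees a jump of size $\gtrsim \eps M_\D(N_*u)(Q^0)$'' and the appeal to \eqref{estimate:oscillation_square}. There is no single Whitney region containing both $Y_Q^\pm=X_{\widetilde Q}^\pm$ and $Y_{Q^k}^\pm=X_{\widetilde{Q^k}}^\pm$: the parent generation cube $Q^k$ may have side length vastly larger than $\ell(Q)$, so these points live at completely different scales. Estimate \eqref{estimate:oscillation_square} controls only $\osc{U_R^+}u$ on a \emph{single} region and says nothing about $|u(Y_Q^+)-u(Y_{Q^k}^+)|$. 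The paper fills exactly this gap with the estimate \eqref{stopping_carleson_estimate} (the analogue of \cite[Lemma 5.11]{hofmannmartellmayboroda}): one connects $Y_Q^\pm$ to $Y_{Q^k}^\pm$ by a Harnack chain inside the sawtooth $\Omega_{\Sc'(Q^k)}$, telescopes, and obtains
\[
M_\D(N_*u)(Q^k)^2 \sum_{Q\in G_1(Q^k),\,\pi_\Pc Q=\pi_\Pc Q_0}\sigma(Q)\ \lesssim\ \frac{1}{\eps^2}\iint_{\Omega_{\mathscr S(Q^k)}}|\nabla u|^2\,\delta\,dY.
\]
Only after this does the chord-arc machinery \eqref{inequality_chain1}--\eqref{inequality_chain2} apply (summing over generations, using bounded overlap of the disjoint $\Sc'(Q^k)$, and bounding $\|u\|_{L^\infty}$ on the global sawtooth via the $\pi_\Pc$-restriction). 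Your outline jumps straight to the Carleson measure bound without this telescoping step, which is the actual content of the lemma.

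\medskip

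\textbf{Route 2 (reduce to $\Rc\cup\Bc$).} Your suspicion is correct: this does not work, for the same reason. Condition (2) or (3) compares $u$ at $Y_Q^\pm$ and $Y_{Q^k}^\pm$, points in Whitney regions of very different scales; a large such difference gives no lower bound on $\osc{U_Q^i}u$, so $Q$ need not lie in $\Rc$. Also note that cubes stopped by condition (1) are excluded from $G_1=\Fc_1\cap\Sc$ by definition, so every $S\in G^*\setminus\{Q(\Sc)\}_\Sc$ is in $\Sc$ and was stopped by (2) or (3); your separate treatment of condition-(1) cubes is unnecessary.
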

Before the proof, let us make two observations that help us to simplify the proof. 
\begin{enumerate}
  \item[1)] By arguing as in the proof of Lemma \ref{lemma:oscillation_carleson}, we may assume that $Q_0 \in G^*$ and it suffices 
            to show that
            \begin{align*}
              \underset{\pi_\Pc S = \pi_\Pc Q_0}{\sum_{S \in G^* \cap \Sc_0, S \subset Q_0}} \sigma(S) \lesssim \frac{1}{\eps^2} \sigma(Q_0),
            \end{align*}
            where $\Sc_0$ is the unique stopping time regime such that $Q_0 \in \Sc_0$.
  \item[2)] For every $k \ge 0$ and $S \in G_k(\Sc_0)$, let $G_1(S) \subset G(\Sc_0)$ be the $G^*$-children of $S$, i.e. the cubes $P \in G_{k+1}(\Sc_0)$ such that $P \subsetneq S$. 
            For each such $S$ we have
            \begin{align}
              \label{stopping_carleson_estimate} M_\D(N_* u)(S)^2 \underset{\pi_\Pc Q = \pi_\Pc Q_0}{\sum_{Q \in G_1(S)}} \sigma(Q) \lesssim \frac{1}{\eps^2} \iint_{\Omega_{\mathscr{S}(S)}} |\nabla u(Y)|^2 \delta(Y) \, dY,
            \end{align}
            where $\mathscr{S}(S) \coloneqq \Sc'(S) \cap \{Q \in \D \colon \pi_\Pc Q = \pi_\Pc Q_0\}$ is a semicoherent subregime of $\Sc_0$ 
            and $\Omega_{\mathscr{S}(S)}$ is the associated sawtooth region (see \eqref{defin:sawtooth}). The estimate \eqref{stopping_carleson_estimate} is a counterpart of \cite[Lemma 5.11]{hofmannmartellmayboroda} and it follows easily from the original proof.
            To be a little more precise, instead of having $\eps^2 \le 100|u(Y_Q^+) - u(Y_S^+)|^2$ for every $Q \in G_1(S)$ as in 
            \cite[(5.13)]{hofmannmartellmayboroda}, we have $\eps^2 M_\D(N_*u)(S)^2 \le \eps^2 M_\D(N_*u)(Q)^2 \le |u(Y_Q^+) - u(Y_S^+)|^2$ for every $Q \in G_1(S)$.
            The rest of the proof works as it is.
\end{enumerate}

\begin{proof}[Proof of Lemma {\ref{lemma:stopping_carleson}}]
  Let us follow the arguments in the proof of \cite[Lemma 5.16]{hofmannmartellmayboroda} and write
  \begin{align*}
    \underset{\pi_\Pc S = \pi_\Pc Q_0}{\sum_{S \in G^* \cap \Sc_0, S \subset Q_0}} \sigma(S) 
    &= \sum_{k \ge 0} \underset{\pi_\Pc S = \pi_\Pc Q_0}{\sum_{S \in G_k(Q_0)}} \sigma(S) \\
    &= \sigma(Q_0) + \sum_{k \ge 1} \sum_{S' \in G_{k-1}(Q_0)} \underset{\pi_\Pc S = \pi_\Pc Q_0}{\sum_{S \in G_1(S')}} \sigma(S) \eqqcolon \sigma(Q_0) + I.
  \end{align*}
  Using \eqref{stopping_carleson_estimate} and the definition of the sawtooth regions gives us
  \begin{align}
    \nonumber M_\D(N_* u)(\pi_\Pc Q_0)^2 I
    &\overset{\eqref{stopping_carleson_estimate}}{\lesssim} \frac{1}{\eps^2} \sum_{k \ge 1} \sum_{S' \in G_{k-1}(Q_0)} \iint_{\Omega_{\mathscr{S}(S')}} |\nabla u(Y)|^2 \delta(Y) \, dY \\
    \label{estimate:triple_sum} &\le \frac{1}{\eps^2} \sum_{k \ge 1} \sum_{S' \in G_{k-1}(Q_0)} \underset{\pi_\Pc S = \pi_\Pc Q_0}{\sum_{S \in \Sc'(S')}} \iint_{U_S} |\nabla u(Y)|^2 \delta(Y) \, dY
  \end{align}
  We denote $\Omega_0 \coloneqq \bigcup_{S \in G^*_{Q_0}} U_S$ where $G^*_{Q_0} \coloneqq \{S \in \D \colon \pi_\Pc S = \pi_\Pc Q_0\} \cap \bigcup_{k \ge 1} \bigcup_{S' \in G_{k-1}(Q_0)} \Sc'(S')$. 
  By the construction, $\bigcup_{k \ge 1} \bigcup_{S' \in G_{k-1}(Q_0)} \Sc'(S')$ is a coherent subregime of $\Sc_0$ with maximal element $Q_0$ 
  and thus, $G^*_{Q_0}$ is a semicoherent subregime of $\Sc_0$. In particular, the sawtooth region $\Omega_0$ splits into 
  two chord-arc domains $\Omega_0^\pm$ by \cite[Lemma 3.24]{hofmannmartellmayboroda}. Furthermore, by \cite[Theorem 1.2]{azzametal}, both 
  $\partial \Omega_0^+$ and $\partial \Omega_0^-$ are UR. We also note that $\Omega_0 \subset B(x_{Q_0}, C \ell(Q_0))$ (see \cite[(3.14)]{hofmannmartellmayboroda}). 
  Thus, since the triple sum in \eqref{estimate:triple_sum} runs over a collection of disjoint cubes, we can use the 
  bounded overlap of the Whitney regions, \cite[Theorem 1.1]{hofmannmartellmayboroda} and the ADR property of $E$ to show that
  \begin{align*}
    \frac{1}{\eps^2} \sum_{k \ge 1} \sum_{S' \in G_{k-1}(Q_0)} \underset{\pi_\Pc S = \pi_\Pc Q_0}{\sum_{S \in \Sc'(S')}} \iint_{U_S} |\nabla u(Y)|^2 \delta(Y) \, dY
    &\lesssim \frac{1}{\eps^2} \iint_{\Omega_0} |\nabla u(Y)|^2 \delta(Y) \, dY \\
    &\lesssim \frac{1}{\eps^2} \|u\|_{L^\infty(\Omega_0)}^2 \sigma(Q_0).
  \end{align*}
  Since $\pi_\Pc S = \pi_\Pc Q_0$ for every $S \in G^*_{Q_0}$, by \eqref{stopping_condition_1} we have $M_\D(N_*u)(S) \le 2M_\D(N_*u)(\pi_\Pc Q_0)$ for every $S \in G^*_{Q_0}$.
  In particular:
  \begin{align*}
    \|u\|_{L^\infty(\Omega_0)}^2 &\le \sup_{S \in G^*_{Q_0}} \sup_{Y \in U_S} |u(Y)|^2 \\
                                 &\le \sup_{S \in G^*_{Q_0}} \inf_{x \in S} N_*u(x)^2 \\ 
                                 &\le \sup_{S \in G^*_{Q_0}} M_\D(N_*u)(S)^2 \lesssim M_\D(N_*u)(\pi_\Pc Q_0)^2.
  \end{align*}  
  Since the numbers $M_\D(N_* u)(\pi_\Pc Q_0)^2$ cancel out, we have proven the Carleson packing condition of $G^*$.
\end{proof}

\section{Construction of the approximating function}
\label{section:construction}

Before we construct the function, we prove the following technical lemma related to the modified cones $\Gamma_\alpha(x)$ that we 
defined in Section \ref{subsection:non-tangential}. Recall that
\begin{align}
  \label{defin:large_cones} \Gamma_\alpha(x) = \bigcup_{Q \in \D(E), Q \ni x} \bigcup_{\substack{P \in \D(E), \\ \ell(P) = \ell(Q), \\ \alpha \Delta_Q \cap P \neq \emptyset}} \widehat{\textbf{U}}_P.
\end{align}
  
\begin{lemma}
  \label{lemma:large_cones}
  There exists a uniform constant $\alpha_0 > 0$ such that the following holds: if $Q \in \D(E)$ is any cube and 
  $P \in G^*$ is a generation cube such that $\ell(Q) \le \ell(P)$ and $\Omega_{\Sc'(P)} \cap T_Q \neq \emptyset$, 
  then $X_P^\pm, Y_P^\pm \in \Gamma_{\alpha_0}(x)$ for every $x \in Q$.
\end{lemma}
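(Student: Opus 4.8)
The plan is to track side lengths and distances through the chain of containments, reducing everything to the basic geometric facts about Whitney regions and the cone $\Gamma_\alpha(x)$. Fix $Q \in \D(E)$ and a generation cube $P \in G^*$ with $\ell(Q) \le \ell(P)$ and $\Omega_{\Sc'(P)} \cap T_Q \neq \emptyset$; fix $x \in Q$. First I would record the elementary size estimates: every point $Z \in X_P^\pm, Y_P^\pm$ lies in some fattened Whitney box $I^{***}$ with $I \in \Wc_{P'}$ for $P' = P$ or $P' = \widetilde{P}$ (the dyadic parent), so in either case $\ell(P') \approx \ell(P)$, and $\delta(Z) \approx \ell(P) \approx \dist(Z, E)$ with implicit constants structural. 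Hence there is a cube $P'' \in \D(E)$ with $\ell(P'') = \ell(P') \approx \ell(P)$ (adjusting by a bounded number of dyadic generations if necessary so that the side length exactly matches) and $\dist(Z, P'') \lesssim \ell(P)$, i.e. $Z \in \widehat{\textbf U}_{P''}$ for a suitable such cube. The point of the lemma is then to show that $P''$ is ``seen'' from $x$ through some cube $\hat Q \ni x$ with $\ell(\hat Q) = \ell(P'')$ and $\alpha_0 \Delta_{\hat Q} \cap P'' \neq \emptyset$.

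Next I would exploit the hypothesis $\Omega_{\Sc'(P)} \cap T_Q \neq \emptyset$ to locate everything near $Q$. Since $\Omega_{\Sc'(P)} = \operatorname{int}\big(\bigcup_{R \in \Sc'(P)} U_R\big)$ and $T_Q = \operatorname{int}\big(\bigcup_{R' \in \D_Q} U_{R'}\big)$, there exist $R \in \Sc'(P)$ (so $R \subset P$, hence $\ell(R) \le \ell(P)$, and $\dist(R, P) \lesssim \ell(P)$ since $R \subset P \subset \Delta_P$) and $R' \in \D_Q$ (so $R' \subset Q$, $\ell(R') \le \ell(Q)$) with $U_R \cap U_{R'} \neq \emptyset$. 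By the Whitney region overlap property this forces $\ell(R) \approx \ell(R')$ and $\dist(R, R') \lesssim \ell(R)$. Chaining these: $\ell(R) \approx \ell(R') \le \ell(Q) \le \ell(P)$, and
\begin{align*}
  \dist(P, Q) \le \dist(P, R) + \diam(R) + \dist(R, R') + \diam(R') + \diam(Q) \lesssim \ell(P).
\end{align*}
So $Q$ lies within a structural multiple of $\ell(P)$ of $P$; since also $Z$ lies within a structural multiple of $\ell(P)$ of $P$, we get $\dist(x, Z) \lesssim \ell(P)$ for our fixed $x \in Q$ and $Z \in \{X_P^\pm, Y_P^\pm\}$.

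To finish I would produce the witnessing cube for $\Gamma_{\alpha_0}(x)$. Pick $\hat Q \in \D(E)$ with $x \in \hat Q$ and $\ell(\hat Q) = \ell(P'')\approx \ell(P)$ — such a cube exists for all dyadic scales up to $\approx \diam(E)$, and $\ell(P) \lesssim \diam(E)$ since $P \in \D(E)$. Then $\Delta_{\hat Q} = \Delta(z_{\hat Q}, C_1 \ell(\hat Q))$ contains $\Delta(z_{\hat Q}, c \ell(P))$, and since $\dist(x, P'') \le \dist(x, Z) + \dist(Z, P'') \lesssim \ell(P) = \ell(\hat Q)$, enlarging by a fixed structural factor $\alpha_0$ gives $\alpha_0 \Delta_{\hat Q} \cap P'' \neq \emptyset$ — concretely, any point of $P''$ lies within $\lesssim \ell(\hat Q)$ of $z_{\hat Q}$ (via $x$), and $\alpha_0$ absorbs the ratio of that distance to $C_1 \ell(\hat Q)$. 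With $\ell(\hat Q) = \ell(P'')$ and $P'' \ni Z$ (i.e. $Z \in \widehat{\textbf U}_{P''}$), the definition \eqref{defin:large_cones} of $\Gamma_{\alpha_0}(x)$ yields $Z \in \Gamma_{\alpha_0}(x)$. Since all of the constants accumulated along the way are structural, $\alpha_0$ can be chosen uniformly, independent of $Q$, $P$, and $x$. The main obstacle is bookkeeping: making sure the side-length matching is exact (Whitney boxes give $\ell(I) \approx \ell(P)$, not equality, so one must pass to a nearby dyadic cube of exactly the right generation and check the displacement cost stays $\lesssim \ell(P)$) and that the various ``$\approx$'' and ``$\lesssim$'' constants are genuinely structural and combine into a single $\alpha_0$; the boundary and boundedness-of-$E$ corner cases (using the modified cones $\widehat\Gamma_\alpha$) should be handled exactly as in the corresponding arguments of \cite{hofmannmartellmayboroda}.
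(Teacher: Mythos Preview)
Your argument is correct and reaches the same conclusion as the paper, though the organization differs in the final step. Both proofs begin identically, extracting cubes $R \in \Sc'(P)$ and $R' \in \D_Q$ with $U_R \cap U_{R'} \neq \emptyset$ from the hypothesis $\Omega_{\Sc'(P)} \cap T_Q \neq \emptyset$. From there the paper splits into two cases according to whether $\ell(R) \ge \ell(Q)$ or $\ell(R) < \ell(Q)$, in the latter case invoking the semicoherency of $\Sc'(P)$ to produce an intermediate cube of side length exactly $\ell(Q)$; it then uses a separately recorded ``parent propagation'' observation (if a cube appears in the union defining $\Gamma_\alpha(x)$ then so does its dyadic parent) to climb from that intermediate cube up to $P$ and $\widetilde P$. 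You instead collapse all of this into the single distance estimate $\dist(P,Q)\lesssim \ell(P)$ and place $P$ and $\widetilde P$ themselves directly in the defining union at scales $\ell(P)$ and $\ell(\widetilde P)$, via the ancestor $\hat Q \supseteq Q$ of matching side length. This is slightly more economical: it avoids both the case split and the explicit appeal to semicoherency, at the cost of not isolating the parent-propagation fact (which the paper can then reuse). One cosmetic remark: the auxiliary cube $P''$ and the phrase about ``adjusting by a bounded number of dyadic generations'' are unnecessary, since by definition $X_P^\pm \in U_P \subset \widehat{\textbf{U}}_P$ and $Y_P^\pm \in U_{\widetilde P} \subset \widehat{\textbf{U}}_{\widetilde P}$, so you may simply take $P'' \in \{P,\widetilde P\}$ with exact side-length match.
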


\begin{proof}
  We start by noticing that there exists $\alpha > 0$ (depending only on the structural constants) such that
  \begin{align}
    \label{implication:union} \text{if } P \text{ appears in the union } \eqref{defin:large_cones}\text{, then also } \widetilde{P} \text{ appears in the same union},
  \end{align}
  where $\widetilde{P}$ is the dyadic parent of $P$. 
  Indeed, if we have $Q, P \in \D(E)$, $x \in Q$, $\ell(Q) = \ell(P)$ and $\alpha \Delta_Q \cap P \neq \emptyset$,
  then also $x \in \widetilde{Q}$, $\ell(\widetilde{Q}) = \ell(\widetilde{P})$ and $\alpha \Delta_{\widetilde{Q}} \cap \widetilde{P} \neq \emptyset$. The last
  claim follows from the fact that $\emptyset \neq \alpha \Delta_Q \cap P \subset \alpha \Delta_{\widetilde{Q}} \cap \widetilde{P}$.

  Let us then prove the claim of the lemma by following the argument in the proof of \cite[Lemma 5.20]{hofmannmartellmayboroda}. 
  Since $\Omega_{\Sc'(P)} \cap T_Q \neq \emptyset$, there exist cubes $P' \in \Sc'(P)$ and $Q' \subset Q$ such that 
  $U_{P'} \cap U_{Q'} \neq \emptyset$. By the properties of the Whitney regions, we have $\dist(Q',P') \lesssim \ell(Q') \approx \ell(P')$.
  Let us consider two cases:
  \begin{enumerate}
    \item[i)] Suppose that $\ell(P') \ge \ell(Q)$. Then there exists a cube $Q''$ such that $Q \subset Q''$ and $\ell(Q'') = \ell(P')$. 
              Since $Q' \subset Q''$, we have $\dist(Q'',P') \le \dist(Q',P') \lesssim \ell(Q') \le \ell(Q'')$. Thus, for a large enough $\alpha_0$, 
              we have $\widehat{\bf{U}}_{P'} \subset \Gamma_{\alpha_0}(x)$ for every $x \in Q$ and the claim follows from \eqref{implication:union}.
    
    \item[ii)] Suppose that $\ell(P') < \ell(Q)$. Then by the semicoherency of $\Sc'(P)$, there exists a cube $P'' \in \Sc'(P)$ 
               such that $P' \subset P'' \subset P$ and $\ell(P'') = \ell(Q)$. Since $P' \subset P''$ and $Q' \subset Q$, we know 
               that $\dist(P'',Q) \le \dist(P',Q') \lesssim \ell(Q') \le \ell(Q)$. Thus, for a large enough $\alpha_0$, we have 
               $\widehat{\bf{U}}_{P''} \subset \Gamma_{\alpha_0}(x)$ for every $x \in Q$. Again, the claim follows now from \eqref{implication:union}.
  \end{enumerate}
\end{proof}

\subsection{Constructing the function in $T_{Q_0}$}
\label{section:construction_local}

In this section we adopt the terminology from other papers (including \cite{hofmannmartellmayboroda}) and say that a component $U_{Q}^i$ is \emph{blue} if $\text{osc}_{U_Q^i} u \le \varepsilon M_\D(N_*u)(Q)$ 
and \emph{red} if $\text{osc}_{U_Q^i} u > \varepsilon M_\D(N_*u)(Q)$.

We recall the construction of the local functions $\varphi_0$, $\varphi_1$ and $\varphi$ from 
\cite[Section 5]{hofmannmartellmayboroda}. We start by defining an ordered family of good cubes $\{Q_k\}_{k \ge 1}$ relative 
to a fixed cube $Q_0 \in \D$. If $Q_0 \in \Gc$, then $Q_0 \in \Sc$ for some stopping time regime $\Sc$ and thus, 
$Q_0 \in \Sc_1'$ for some subregime in \eqref{decomposition:stopping_time_regimes}. In this case, we set $Q_1 = Q(\Sc_1')$.
If $Q_0 \notin \Gc$, then we let $Q_1$ be any good subcube of $Q_0$ such that $Q_1$ is maximal 
with respect to the side length; such a cube much exist since $\Bc$ is Carleson. Since $Q_1 \in \Gc$, we have 
$Q_1 \in \Sc$ for some stopping time regime $\Sc$, and by the coherency of $\Sc$, we have $Q_1 = Q(\Sc_1')$ 
for some subregime in \eqref{decomposition:stopping_time_regimes}. Once the cube $Q_1$ has been chosen in these two cases,
we let $Q_2$ be a subcube of maximum side length in $(\D_{Q_0} \cap \Gc) \setminus \Sc_1'$ and so on. This gives us a 
sequence of cubes $Q_k \in \Gc$ such that $\ell(Q_1) \ge \ell(Q_2) \ge \ell(Q_3) \ge \cdots$, $Q_k = Q(\Sc_k')$ and 
$\Gc \cap \D_{Q_0} \subset \bigcup_{k \ge 1} \Sc_k'$. We define recursively
\begin{align*}
  A_1 \coloneqq \Omega_{\Sc_1'}, \ \ \ \ \ A_k \coloneqq \Omega_{\Sc_k'} \setminus \left( \bigcup_{j=1}^{k-1} A_j \right), \ k \ge 2.
\end{align*}
and
\begin{align*}
  A_1^\pm \coloneqq \Omega_{\Sc_1'}^\pm, \ \ \ \ \ A_k^\pm \coloneqq \Omega_{\Sc_k'}^\pm \setminus \left( \bigcup_{j=1}^{k-1} A_j \right), \ k \ge 2,
\end{align*}
where
\begin{align*}
  \Omega_{\Sc_k'} \coloneqq \text{int}\left( \bigcup_{Q \in \Sc_k'} U_Q^\pm \right).
\end{align*}
We also set
\begin{align*}
  \Omega_0 \coloneqq \bigcup_k \Omega_{\Sc_k'} = \bigcup_k A_k \ \ \ \ \ \text{ and } \ \ \ \ \ \Omega_0^\pm \coloneqq \bigcup_k A_k^\pm.
\end{align*}
We now define $\varphi_0$ on $\Omega_0$ by setting
\begin{align*}
  \varphi_0 \coloneqq \sum_k \left(u(Y_{Q_k}^+)1_{A_k^+} + u(Y_{Q_k}^-)1_{A_k^-}\right).
\end{align*}
As for the rest of the subcubes of $\D_{Q_0}$, we let $\{Q(k)\}_k$ be some fixed enumeration of the cubes 
$(\Rc \cup \Bc) \cap \D_{Q_0}$ and define recursively
\begin{align*}
  V_1 \coloneqq U_{Q(1)}, \ \ \ \ \ V_k \coloneqq U_{Q(k)} \setminus \left( \bigcup_{j=1}^{k-1} V_j \right), \ k \ge 2.
\end{align*}
Each Whitney region $U_{Q(k)}$ splits into a uniformly bounded number of connected components $U_{Q(k)}^i$. Thus, we may 
further split
\begin{align*}
  V_1^i \coloneqq U_{Q(1)}^i, \ \ \ \ \ V_k^i \coloneqq U_{Q(k)}^i \setminus \left( \bigcup_{j=1}^{k-1} V_j \right), \ k \ge 2
\end{align*}
and then define 
\begin{align*}
  \varphi_1(Y) \coloneqq \left\{ \begin{array}{cl}
                                   u(Y), & \text{ if } U_{Q(k)}^i \text{ is red} \\
                                   u(X_I), & \text{ if } U_{Q(k)}^i \text{ is blue}
                                 \end{array} \right., Y \in V_k^i,
\end{align*}
on each $V_k^i$, where $X_I$ is the center of a fixed Whitney cube $I \subset U_{Q(k)}^i$. 
We then denote $\Omega_1 \coloneqq \text{int} \left( \bigcup_{Q \in \left( \mathcal{B} \cup \Rc \right) \cap \D_{Q_0}} U_Q \right) = \text{int} \left( \bigcup_k V_k \right)$,
set the values of $\varphi_0$ and $\varphi_1$ to be $0$ outside their original domains of definition
and define the function $\varphi$ on the Carleson box $T_{Q_0}$ as
\begin{align*}
  \varphi(Y) \coloneqq \left\{ \begin{array}{cl}
                                 \varphi_0(Y), & Y \in T_{Q_0} \setminus \overline{\Omega_1} \\
                                 \varphi_1(Y), & Y \in \Omega_1
                               \end{array} \right. ,
\end{align*}
From the point of view of $\Cc_\D$, the values of $\varphi$ on the boundary of $\Omega_1$ are not important since 
the $(n+1)$-dimensional measure of $\partial \Omega_1$ is $0$. Thus, we may simply set $\varphi|_{\partial \Omega_1} = u$ 
since this is convenient from the point of view of $N_*(u - \varphi)$.

\subsection{Verifying the estimates on $Q_0$}

Let us fix a cube $Q_0 \in \D(E)$. We start by verifying the following three estimates on $Q_0$.

\begin{lemma}
  \label{lemma:local_pointwise_bounds}
  Suppose that $x \in Q_0$, $Q' \in \D_{Q_0}$ and $\overrightarrow{\Psi} \in C_0^1(W_{Q'})$ with $\|\overrightarrow{\Psi}\|_{L^\infty} \le 1$, where $W_{Q'} \subset \Omega$ is any bounded and open set satisfying $T_{Q'} \subset W_{Q'}$. Then the following estimates hold:
  \begin{enumerate}
    \item[i)] $N_*(1_{T_{Q_0}} (u - \varphi))(x) \le \eps M_\D(N_* u)(x)$,
    \item[ii)] \begin{align*}
                 \iint_{T_{Q'} \setminus \overline{\Omega_1}} \varphi_0 \text{div} \overrightarrow{\Psi} \lesssim \frac{1}{\eps^2} \int_{\beta \Delta_{Q'}} N_*^{\alpha_0}u \, d\sigma,
               \end{align*}
    \item[iii)] \begin{align*}
                  \iint_{T_{Q'}} \varphi_1 \text{div} \overrightarrow{\Psi} \lesssim \frac{1}{\eps^2} \int_{\beta \Delta_{Q'}} N_* u \, d\sigma,
                \end{align*}
  \end{enumerate}
  where $\beta > 0$ is a uniform constant and $\alpha_0 > 0$ is the constant in Lemma \ref{lemma:large_cones}.
\end{lemma}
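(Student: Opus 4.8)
The three estimates are local statements about the construction in $T_{Q_0}$, and I would prove them one at a time, working from the definitions of $\varphi_0$, $\varphi_1$, and $\varphi$ in Section~\ref{section:construction_local}.

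For part i), the plan is to fix $x \in Q_0$ and a point $Y \in \Gamma(x)$ with $Y \in T_{Q_0}$; then $Y$ lies in some Whitney region $\widehat{\textbf{U}}_Q$ with $Q \subset Q_0$ (up to the usual fattening bookkeeping). There are two cases. If $Y$ lies in the ``$\varphi_0$ part'' $T_{Q_0}\setminus\overline{\Omega_1}$, then $Y$ sits in some $A_k^\pm$ with $Q \in \Sc_k'$, so $\varphi(Y) = u(Y_{Q_k}^\pm)$. Since $Q$ was \emph{not} stopped by conditions (2)--(3) in the generation-cube construction (those generate the cubes that leave $\Sc_k'$), we get $|u(Y) - u(Y_{Q_k}^\pm)| \le \osc{U_Q^\pm} u + |u(Y_Q^\pm) - u(Y_{Q_k}^\pm)| \lesssim \eps M_\D(N_*u)(Q) \le \eps M_\D(N_*u)(x)$, using that $Q$ is a ``blue'' (not in $\Rc$) good cube so its oscillation is controlled, plus the generation stopping condition and a telescoping/Harnack-chain argument through the intermediate generations. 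If instead $Y$ lies in $\Omega_1$, i.e. $Q \in (\Rc\cup\Bc)\cap\D_{Q_0}$, then on the component containing $Y$ either $\varphi_1(Y) = u(Y)$ (the red case, difference zero) or $\varphi_1(Y) = u(X_I)$ with $X_I$ and $Y$ in the same connected component $U_Q^i$, which is blue, so $|u(Y)-u(X_I)| \le \osc{U_Q^i} u \le \eps M_\D(N_*u)(Q) \le \eps M_\D(N_*u)(x)$. Taking the supremum over $Y$ gives i). The main subtlety here is the telescoping across generation cubes to pass from $Y_Q^\pm$ to $Y_{Q_k}^\pm$, which is exactly the mechanism that the generation stopping conditions (2)--(3) were designed to control; I would cite the corresponding estimate from \cite[Section 5]{hofmannmartellmayboroda}.

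For parts ii) and iii), the plan is to integrate by parts and estimate the resulting pieces using the Carleson packing lemmas proved earlier. For iii): $\varphi_1$ is constant ($=u(X_I)$) on each blue component $V_k^i$ and equals $u$ on each red component. Thus $\iint_{T_{Q'}}\varphi_1\,\mathrm{div}\overrightarrow{\Psi}$ picks up, from the blue components, only boundary terms between adjacent Whitney regions (the constant values have zero gradient in the interior), and from the red components the term $\iint |\nabla u|$. The boundary terms are estimated by summing $\sum |u(X_{I}) - u(X_{I'})| \cdot \Hc^n(\text{shared face})$ over adjacent Whitney cubes; each such difference is $\lesssim \eps M_\D(N_*u)(Q)$ when the cubes lie in blue components, and $\lesssim \osc_{U_Q^i}u \lesssim N_*u$ more crudely, and the faces have measure $\lesssim \ell(Q)^n \approx \sigma(Q)$. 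Summing over $Q \in (\Rc\cup\Bc)\cap\D_{Q'}$ and invoking Corollary~\ref{corollary:bad_cubes_carleson} ($\sum_{R \in (\Rc\cup\Bc), R\subseteq Q'}\sigma(R) \lesssim \eps^{-2}\sigma(Q')$) together with the definition of $M_\D(N_*u)$ and the ADR property produces $\frac{1}{\eps^2}\int_{\beta\Delta_{Q'}} N_*u\,d\sigma$; the interior red contribution $\iint |\nabla u|$ over red Whitney regions is handled by Cauchy--Schwarz against $\delta(Y)^{\pm 1}$, turning it into a square-function bound $\big(\sum_R \ell(R)^n \osc_{U_R^i}u^2\big)^{1/2}\big(\sum_R \sigma(R)\big)^{1/2}$-type expression, again closed by Corollary~\ref{corollary:bad_cubes_carleson}. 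For ii): $\varphi_0$ is constant ($=u(Y_{Q_k}^\pm)$) on each $A_k^\pm$, so again only interface terms survive; the difference of the two constant values across an interface between $A_j^\pm$ and $A_k^\pm$ is controlled using the generation stopping conditions and Lemma~\ref{lemma:large_cones}, which guarantees $X_{Q_k}^\pm, Y_{Q_k}^\pm \in \Gamma_{\alpha_0}(x)$ for the relevant base cubes, so these differences are dominated by $\eps N_*^{\alpha_0}u$ at nearby boundary points; the packing of the interfaces is controlled by Lemma~\ref{lemma:stopping_carleson} (packing of $G^*$, with its $\eps^{-2}$) and Lemma~\ref{lemma:p-carleson}.

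\emph{Main obstacle.} The hardest part will be ii): unlike the $\varphi_1$ estimate, where the relevant cubes form the Carleson collection $\Rc\cup\Bc$ with a ready-made packing bound, the interfaces appearing in $\iint \varphi_0\,\mathrm{div}\overrightarrow{\Psi}$ are between sawtooth regions $\Omega_{\Sc_k'}$ associated to \emph{different} generation subregimes, and controlling both (a) the jump $|u(Y_{Q_j}^\pm) - u(Y_{Q_k}^\pm)|$ across such an interface — which requires chaining through generation cubes and is precisely where the cones $\Gamma_{\alpha_0}$ of Lemma~\ref{lemma:large_cones} enter, so that the relevant center points $Y_{Q_k}^\pm$ are seen by $N_*^{\alpha_0}u$ from points of $\beta\Delta_{Q'}$ — and (b) the total $\Hc^n$-measure of all such interfaces inside $T_{Q'}$, which needs the packing of $G^*$ from Lemma~\ref{lemma:stopping_carleson} (this is the source of the $\eps^{-2}$), requires care. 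Getting the aperture $\beta$ and the constant $\alpha_0$ to match up, and making sure the fattened-Whitney-box bookkeeping ($I^*, I^{**}, I^{***}$) is consistent with the cone definitions, is where most of the technical effort lies; this is the step where the excerpt warns that ``we have to take care of many technicalities.''
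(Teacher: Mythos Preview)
Your overall architecture is right---integrate by parts on the piecewise-constant regions and close with the Carleson packing lemmas---and your treatment of i) is essentially the paper's (the paper is terser: on the $\varphi_0$ side it just uses that the Whitney region containing $Y$ is blue together with the generation stopping bound, without spelling out the telescoping). Your red/blue split in iii) is also on target; the paper bounds the red contribution $\iint_{V_l^i}|\nabla u|$ by a direct application of Caccioppoli (turning it into $\ell(Q(l))^{-1}\|u\|_{L^2(\widehat U_{Q(l)})}\cdot \ell(Q(l))^{(n+1)/2}\lesssim \inf_{Q(l)}N_*u\cdot\sigma(Q(l))$), which is a little cleaner than the square-function detour you sketch.

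There is, however, a genuine gap in your plan for ii). You claim that the jump $|u(Y_{Q_j}^\pm)-u(Y_{Q_k}^\pm)|$ across an interface between $A_j^\pm$ and $A_k^\pm$ is $\lesssim \eps\, N_*^{\alpha_0}u$, ``using the generation stopping conditions.'' But those conditions give a \emph{lower} bound: when $Q$ is a stopping cube under $Q_k$, one has $|u(Y_Q^\pm)-u(Y_{Q_k}^\pm)|>\eps\, M_\D(N_*u)(Q)$, not $\le$. Lemma~\ref{lemma:large_cones} tells you only that $Y_{Q_k}^\pm\in\Gamma_{\alpha_0}(x)$, hence $|u(Y_{Q_k}^\pm)|\le N_*^{\alpha_0}u(x)$---a bound on the size of the constants, with no $\eps$ saving on the jump.

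The paper therefore does not try to exploit cancellation across interfaces at all. On each $(T_{Q'}\cap A_k^\pm)\setminus\overline{\Omega_1}$ it integrates by parts and bounds the boundary integral crudely by
\[
|u(Y_{Q_k}^\pm)|\cdot \Hc^n\big(T_{Q'}\cap\partial(A_k^\pm\setminus\overline{\Omega_1})\big),
\]
estimates $|u(Y_{Q_k}^\pm)|\le \inf_{Q_k}N_*u$ when $Q_k\subset Q'$ and $|u(Y_{Q_k}^\pm)|\le \inf_{Q'}N_*^{\alpha_0}u$ when $Q_k\not\subset Q'$ (this is exactly where Lemma~\ref{lemma:large_cones} enters), uses the upper ADR bounds for $\partial A_k^\pm$, $\partial V_m$, $\partial\Omega_1$, $\partial T_{Q'}$, and then sums via the \emph{sparseness} that Theorem~\ref{thm:sparse_carleson} extracts from the Carleson packing of $G^*$ (Lemma~\ref{lemma:stopping_carleson}) and of $\Rc\cup\Bc$ (Corollary~\ref{corollary:bad_cubes_carleson}). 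The $\eps^{-2}$ in the final estimate is purely the packing constant; no $\eps$-smallness of any jump is invoked. The same remark applies to iii): on each $V_l^i$ the paper bounds $|\varphi_1|\le\sup_{U_{Q(l)}}|u|\le\inf_{Q(l)}N_*u$ directly rather than tracking differences between neighbouring Whitney constants. Your scheme becomes correct if you drop the erroneous $\eps$ on the jump and bound it by $N_*^{\alpha_0}u$, but at that point you are doing strictly more bookkeeping than the paper for the same output.
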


\begin{proof} \

  \begin{enumerate}
    \item[i)] Let us estimate the quantity $|u(Y) - \varphi(Y)|$ for different $Y \in T_{Q_0}$.
              \begin{enumerate}
                \item[$\bullet$] Suppose that $Y \in V_k^i$ such that $U_{Q(k)}^i$ is a red component. Then we have $\varphi(Y) = u(Y)$ and 
                                 $|u(Y) - \varphi(Y)| = 0$.
                
                \item[$\bullet$] Suppose that $Y \in V_k^i$ such that $U_{Q(k)}^i$ is a blue component. Then $\varphi(Y) = u(X_I)$ for a Whitney cube 
                                 $I \subseteq U_{Q(k)}^i$ and $|u(Y) - \varphi(Y)| \le \text{osc}_{U_{Q(k)}^i} u \le \eps M_\D(N_*u)(Q(k))$.
                                  
                \item[$\bullet$] Suppose that $Y \in T_{Q_0} \setminus \overline{\Omega_1}$. Then $Y \in A_k^\pm$ for some $k$ such that $Q_k \notin \Rc$.
                                 Without loss of generality, we may assume that $Y \in A_k^+$. Now $\varphi(Y) = u(Y_{Q_k}^+)$ and, since $Q_k \notin \Rc$,
                                 we have $|u(Y) - \varphi(Y)| \le \text{osc}_{U_{Q_k}^+} \le \eps M_\D(N_*u)(Q_k)$.
              \end{enumerate}
              Combining the previous estimates gives us
              \begin{align*}
                N_*(1_{T_{Q_0}}(u-\varphi))(x) &= \sup_{Y \in \Gamma(x) \cap T_{Q_0}} |u(Y) - \varphi(Y)| \\
                                               &= \sup_{\substack{Q \in \D_{Q_0} \\ Q \ni x}} \sup_{Y \in U_Q} |u(Y) - \varphi(Y)| \\
                                               &\le \sup_{\substack{Q \in \D_{Q_0} \\ Q \ni x}} \eps M_\D(N_*u)(Q) \\
                                               &\le \eps M_\D(N_*u)(x). 
              \end{align*}
              
    \item[ii)] We first notice that since $\Psi$ is compactly supported in $\Omega$, we have $
               \dist(\text{supp} \, \Psi, E) > 0$. Thus, for each $A_k$, the set $(T_{Q'} \cap A_k \cap \text{supp} \, \Psi) \setminus \overline{\Omega_1}$ consists of a union of boundedly overlapping 
               sets that are ``nice'' enough for integration by parts. The divergence theorem gives us
               \begin{align*}
                 \iint_{T_{Q'} \setminus \overline{\Omega_1}} \varphi_0 \, \text{div} \overrightarrow{\Psi}
                 &\le \sum_k \iint_{(T_{Q'} \cap A_k) \setminus \overline{\Omega_1}} \varphi_0 \, \text{div} \overrightarrow{\Psi} \\
                 &= \sum_k \iint_{(T_{Q'} \cap A_k) \setminus \overline{\Omega_1}}  \text{div}(\varphi_0 \overrightarrow{\Psi}) \\
                 &\le \sum_k \left( \iint_{\partial( (T_{Q'} \cap A_k^+) \setminus \overline{\Omega_1}))}  \varphi_0 \overrightarrow{\Psi} \cdot \overrightarrow{N}
                 + \iint_{\partial( (T_{Q'} \cap A_k^-) \setminus \overline{\Omega_1}))} \varphi_0 \overrightarrow{\Psi} \cdot \overrightarrow{N} \right) \\
                 &\le \sum_k |u(Y_{Q_k}^+)| \cdot \Hc^n( T_{Q'} \cap \partial (A_k^+ \setminus \overline{\Omega_1}) ) \\ 
                 &\quad + \sum_k |u(Y_{Q_k}^-)| \cdot \Hc^n( T_{Q'} \cap \partial (A_k^- \setminus \overline{\Omega_1}) ) \\
                 &\eqqcolon I^+ + I^-.
               \end{align*}
               We only consider the sum $I^+$ since the sum $I^-$ can be handled the same way as $I^+$. We get
               \begin{align*}
                 \Hc^n( T_{Q'} \cap \partial (A_k^+ \setminus \overline{\Omega_1}))
                 \le \Hc^n( T_{Q'} \cap \partial A_k^+ ) + \Hc^n( T_{Q'} \cap A_k^+ \cap \partial \Omega_1 )
               \end{align*}
               and thus, we have
               \begin{align*}
                 I^+ \le \sum_k |u(Y_{Q_k}^+)| \cdot \Hc^n( T_{Q'} \cap \partial A_k^+ )
                         + \sum_k |u(Y_{Q_k}^+)| \cdot \Hc^n( T_{Q'} \cap A_k^+ \cap \partial \Omega_1 )
                     \eqqcolon I_1^+ + I_2^+.
               \end{align*}
               Let us consider the sum $I_1^+$ first. We split
               \begin{align*}
                 I_1^+ = \sum_{k \colon Q_k \subset Q'} |u(Y_{Q_k}^+)| \cdot \Hc^n(T_{Q'} \cap \partial A_k^+)
                       + \sum_{k\colon Q_k \not\subset Q'} |u(Y_{Q_k}^+)| \cdot \Hc^n(T_{Q'} \cap \partial A_k^+)
                       \eqqcolon J_1^+ + J_2^+.
               \end{align*}
               By \cite[Proposition A.2, (5.21)]{hofmannmartellmayboroda} we know that $\partial A_k^+$ satisfies 
               an upper ADR bound. Thus, since $\partial(T_{Q'} \cap A_k^+) \subset \overline{\Omega_{\Sc_k'}}$ and $\diam(\Omega_{\Sc_k'}) \lesssim \ell(Q_k)$, we get
               \begin{align*}
                 J_1^+ \lesssim \sum_{k: Q_k \subset Q'} |u(Y_{Q_k}^+)| \cdot \ell(Q_k)^n
                       \approx \sum_{k: Q_k \subset Q'} |u(Y_{Q_k}^+)| \cdot \sigma(Q_k) 
                       \le \sum_{k: Q_k \subset Q'} \inf_{Q_k} N_*u \cdot \sigma(Q_k).
               \end{align*}
               Since the collection of generation cubes is $C\eps^{-2}$-Carleson by Lemma \ref{lemma:stopping_carleson}, it is $C\eps^2$-sparse by Theorem \ref{thm:sparse_carleson}. Thus, we get
               \begin{align*}
                 \sum_{k: Q_k \subset Q'} \inf_{Q_k} N_*u \cdot \sigma(Q_k)
                       &\lesssim \frac{1}{\eps^2} \sum_{k: Q_k \subset Q'} \inf_{Q_k} N_*u \cdot \sigma(E_{Q_k}) \\
                       &\le \frac{1}{\eps^2} \sum_{k: Q_k \subset Q'} \int_{E_{Q_k}} N_*u \, d\sigma \\
                       &\le \frac{1}{\eps^2} \int_{Q'} N_* u \, d\sigma
               \end{align*}
               Let us then consider the sum $J_2^+$. By the same argument as in \cite[p. 2370]{hofmannmartellmayboroda}, we know that 
               the number of the cubes $Q_k$ such that $T_{Q'} \cap \partial A_k^+ \neq \emptyset$ and $\ell(Q_k) \ge \ell(Q')$ is uniformly 
               bounded. Thus, by Lemma \ref{lemma:large_cones} and the fact that $\partial A_k^+$ satisfies an upper ADR bound (as we noted above), we get
               \begin{align*}
                 \sum_{\substack{k\colon Q_k \not\subset Q', \\ T_{Q'} \cap \partial A_k^+ \neq \emptyset, \\ \ell(Q') \le \ell(Q_k)}} |u(Y_{Q_k}^+)| \cdot \Hc^n(T_{Q'} \cap \partial A_k^+) 
                        &\le \sum_{\substack{k\colon Q_k \not\subset Q', \\ T_{Q'} \cap \partial A_k^+ \neq \emptyset, \\ \ell(Q') \le \ell(Q_k)}} \inf_{Q'} N_*^{\alpha_0} u \cdot \Hc^n(T_{Q'} \cap \partial A_k^+) \\
                       &\quad \lesssim \inf_{Q'} N_*^{\alpha_0} u \cdot \left( \diam(T_{Q'}) \right)^n \\
                       &\quad \approx \inf_{Q'} N_*^{\alpha_0} u \cdot \sigma(Q') \\
                       &\quad \le \int_{Q'} N_*^{\alpha_0}u \, d\sigma.
               \end{align*}
               For the cubes $Q_k$ in $J_2^+$ such that $\ell(Q_k) \le \ell(Q')$ we may use the same argument as in \cite[p. 2370]{hofmannmartellmayboroda} to see that
               every such cube is contained in some nearby cube $Q''$ of $Q'$ of the same side length as $Q'$ with $\dist(Q',Q'') \lesssim \ell(Q')$.
               The number of such $Q''$ is uniformly bounded. By using the same techniques as with the sum $J_1^+$, we get
               \begin{align*}
                 \sum_{\substack{k\colon Q_k \not\subset Q', \\ T_{Q'} \cap \partial A_k^+ \neq \emptyset, \\ \ell(Q') \ge \ell(Q_k)}} |u(Y_{Q_k}^+)| \cdot \Hc^n(T_{Q'} \cap \partial A_k^+)
                 &\lesssim \sum_{Q''} \frac{1}{\eps^2} \int_{Q''} N_*u \, d\sigma \\
                 &\le \frac{1}{\eps^2} \int_{\beta_0 \Delta_{Q'}} N_*u \, d\sigma
               \end{align*}
               for some uniform constant $\beta_0$. Thus, we get
               \begin{align*}
                 J_2^+ \lesssim \frac{1}{\eps^2} \int_{\beta_0 \Delta_{Q'}} N_*^{\alpha_0}u \, d\sigma.
               \end{align*}
               Let us then consider the sum $I_2^+$. We first notice that
               \begin{align*}
                 \Hc^n( T_{Q'} \cap A_k^+ \cap \partial \Omega_1 ) \le \sum_m \Hc^n( T_{Q'} \cap A_k^+ \cap \partial V_m ).
               \end{align*}
               Thus, we get
               \begin{align*}
                 I_2^+ &\le \sum_k \sum_m |u(Y_{Q_k}^+)| \cdot \Hc^n( T_{Q'} \cap A_k^+ \cap \partial V_m ) \\
                       &= \sum_{k: Q_k \subset Q'} \sum_m |u(Y_{Q_k}^+)| \cdot \Hc^n( T_{Q'} \cap A_k^+ \cap \partial V_m ) \\
                       &\quad + \sum_{k: Q_k \not\subset Q'} \sum_m |u(Y_{Q_k}^+)| \cdot \Hc^n( T_{Q'} \cap A_k^+ \cap \partial V_m ) \\
                       &\eqqcolon J_3^+ + J_4^+.
               \end{align*}
               Suppose that $A_k^+ \cap \partial V_m \neq \emptyset$. Then, by the construction, we have $\ell(Q(m)) \lesssim \ell(Q_k)$
               and $\dist(Q(m),Q_k) \lesssim \ell(Q_k)$. Thus, there exists a uniform constant $\beta_1 > 0$ such that 
               $Q(m) \subset \beta_1 \Delta_{Q_k}$ and the set $\beta_1 \Delta_{Q_k}$ can be covered by a uniformly bounded 
               number of disjoint cubes with approximately the same side length as $Q_k$.
               In particular, since $T_{Q'} \cap A_k^+ \cap \partial V_m$ satisfies an upper ADR bound for every $m$ by the construction 
               and \cite[(5.25), Proposition A.2]{hofmannmartellmayboroda}, we get
               \begin{align*}
                 J_3^+ &= \sum_{k: Q_k \subset Q'} \sum_m |u(Y_{Q_k}^+)| \cdot \Hc^n( T_{Q'} \cap A_k^+ \cap \partial V_m ) \\
                       &\lesssim \sum_{k: Q_k \subset Q'} |u(Y_{Q_k}^+)| \sum_{m: Q(m) \subset \beta_1 \Delta_{Q_k}} \ell(Q(m))^n \\
                       &\lesssim \sum_{k: Q_k \subset Q'} |u(Y_{Q_k}^+)| \sum_{m: Q(m) \subset \beta_1 \Delta_{Q_k}} \sigma(Q(m)) \\
                       &\overset{\eqref{corollary:carleson_bad_oscillation}}{\lesssim} \frac{1}{\eps^2} \sum_{k: Q_k \subset Q'} |u(Y_{Q_k}^+)| \cdot \sigma(Q_k).
               \end{align*}
               Now we can use exactly the same arguments as with the sum $J_1^+$ to see that
               \begin{align*}
                 J_3^+ \lesssim \frac{1}{\eps^2} \int_{Q'} N_* u \, d\sigma.
               \end{align*}
               Finally, let us handle the sum $J_4^+$. Just as above with the sum $J_3^+$, for some uniform constant $\beta_2 > 0$ we get
               \begin{align*}
                 \sum_{\substack{k: Q_k \not\subset Q' \\ \ell(Q') \le \ell(Q_k)}} \sum_m |u(Y_{Q_k}^+)| \cdot \Hc^n( T_{Q'} \cap A_k^+ \cap \partial V_m ) 
                 &\le \sum_{\substack{k: Q_k \not\subset Q' \\ T_{Q'} \cap A_k^+ \neq \emptyset \\ \ell(Q') \le \ell(Q_k)}} |u(Y_{Q_k}^+)| \sum_{m: V_m \subset \beta_2 \Delta_{Q'}} \sigma(Q(m)) \\
                 &\overset{\eqref{corollary:carleson_bad_oscillation}}{\lesssim} \frac{1}{\eps^2} \sum_{\substack{k: Q_k \not\subset Q' \\ T_{Q'} \cap A_k^+ \neq \emptyset \\ \ell(Q') \le \ell(Q_k)}} |u(Y_{Q_k}^+)| \cdot \sigma(Q') \\
                 &\overset{\ref{lemma:large_cones}}{\le} \frac{1}{\eps^2} \sum_{\substack{k: Q_k \not\subset Q' \\ T_{Q'} \cap A_k^+ \neq \emptyset \\ \ell(Q') \le \ell(Q_k)}} \inf_{Q'} N_*^{\alpha_0} u \cdot \sigma(Q') \\
                 &\lesssim \frac{1}{\eps^2} \int_{Q'} N_*^{\alpha_0} u \, d\sigma,
               \end{align*}
               where we used the fact that there exists only a uniformly bounded number of cubes $Q_k$ that satisfy the condition of the sum by \cite[Lemma 5.20]{hofmannmartellmayboroda}.
               By using the same argument as with the latter half of the sum $J_2^+$, we get the bound
               \begin{align*}
                 \sum_{\substack{k: Q_k \not\subset Q' \\ \ell(Q') \ge \ell(Q_k)}} \sum_m |u(Y_{Q_k}^+)| \cdot \Hc^n( T_{Q'} \cap A_k^+ \cap \partial V_m )
                 \lesssim \frac{1}{\eps^2} \int_{\beta_3 \Delta_{Q'}} N_*u \, d\sigma
               \end{align*}
               for some uniform contant $\beta_3 > 0$. Thus, we have
               \begin{align*}
                 J_4^+ \lesssim \frac{1}{\eps^2} \int_{\beta_3 \Delta_{Q'}} N_*^{\alpha_0}u \, d\sigma.
               \end{align*}
               Combining the estimates for $J_1^+$, $J_2^+$, $J_3^+$ and $J_4^+$ gives us the claim.

    \item[iii)] By \cite[(5.25)]{hofmannmartellmayboroda}, we have
                \begin{align}
                  \label{estimate:component_boundary} \Hc^n(\partial V_k^i) \le \Hc^n(\partial V_k) \lesssim \ell(Q(k))^n \approx \sigma(Q(k))
                \end{align}
                for every $Q(k)$ and $i$. We also note that $\partial T_{Q'}$ satisfies an upper ADR bound \cite[Proposition A.2]{hofmannmartellmayboroda}. 
                Recall that the function $\varphi_1$ is supported on $\Omega_1$. Thus, since the sets $V_l$ are disjoint, we get
                \begin{align*}
                  \iint_{T_{Q'}} \varphi_1 \text{div} \overrightarrow{\Psi}
                  &= \sum_l \iint_{T_{Q'} \cap V_l} \varphi_1 \text{div} \overrightarrow{\Psi} \\
                  &= \sum_l \sum_i \iint_{T_{Q'} \cap V_l^i} \varphi_1 \text{div} \overrightarrow{\Psi} \\
                  &= \sum_l \sum_i \left( \iint_{T_{Q'} \cap V_l^i} \text{div}( \varphi_1 \overrightarrow{\Psi} ) - \iint_{T_{Q'} \cap V_l^i} \nabla \varphi_1 \cdot \overrightarrow{\Psi} \right) \\
                  &\le \sum_l \sum_i \left( \left| \iint_{T_{Q'} \cap V_l^i} \text{div}( \varphi_1 \overrightarrow{\Psi} )\right| + \iint_{T_{Q'} \cap V_l^i} |\nabla \varphi_1| \right).
                \end{align*}
                Let us first assume that $U_{Q(l)}^i$ is a blue component. Recall that since the collection $\Rc \cup \Bc$ is $C\eps^{-2}$-Carleson by 
                Corollary \ref{corollary:bad_cubes_carleson}, it is $C\eps^2$-sparse by Theorem \ref{thm:sparse_carleson}. Thus, by the definition of 
                $\varphi_1$ and the divergence theorem, we have
                \begin{align*}
                  \left| \iint_{T_{Q'} \cap V_l^i} \text{div}( \varphi_1 \overrightarrow{\Psi} )\right| + \iint_{T_{Q'} \cap V_l^i} |\nabla \varphi_1|
                  &= \left| \iint_{T_{Q'} \cap V_l^i} \text{div}( \varphi_1 \overrightarrow{\Psi}) \right| \\
                  &\le \iint_{T_{Q'} \cap \partial V_l^i} |u(X_{I(l,i)})| \\
                  &\overset{\eqref{estimate:component_boundary}}{\le} \inf_{Q(l)} N_*u \cdot \sigma(Q(l)) \\
                  &\lesssim \frac{1}{\eps^2} \inf_{Q(l)} N_*u \cdot \sigma(E_{Q(l)}).
                \end{align*}
                Suppose then that $U_{Q(l)}^i$ is a red component. Since $\partial V_l^i \subset \Gamma(y)$ for every $y \in Q(l)$, 
                we get $| \iint_{T_{Q'} \cap V_l^i} \text{div}( u \overrightarrow{\Psi} )| \le \tfrac{1}{\eps^2} \inf_{Q(l)} N_*u \cdot \sigma(E_{Q(l)})$ by the same 
                argument as above. Also, by the definition of the function $\varphi_1$, Caccioppoli's inequality and the sparseness arguments, we have
                \begin{align*}
                  \iint_{T_{Q'} \cap V_l^i} |\nabla \varphi_1| 
                  &= \iint_{V_l^i} |\nabla u| \\
                  &\lesssim \left( \iint_{V_l^i} |\nabla u|^2 \right)^{1/2} \ell(Q(l))^{(n+1)/2} \\
                  &\lesssim \frac{1}{\ell(Q(l))} \left( \iint_{\widehat{U}_{Q(l)}} |u|^2 \right)^{1/2} \ell(Q(l))^{(n+1)/2} \\
                  &\lesssim \frac{1}{\ell(Q(l))} \left( \iint_{\widehat{U}_{Q(l)}} \inf_{Q(l)} (N_*u)^2 \right)^{1/2} \ell(Q(l))^{(n+1)/2} \\
                  &\lesssim \frac{1}{\ell(Q(l))} \inf_{Q(l)} (N_*u) \cdot \ell(Q(l))^{n+1} \\
                  &\approx \sigma(Q(l)) \cdot \inf_{Q(l)} (N_*u)
                  \lesssim \frac{1}{\eps^2} \sigma(E_{Q(l)}) \cdot \inf_{Q(l)} N_*u.
                \end{align*}
                Thus, since every Whitney region $U_Q$ has only a uniformly bounded number of components $U_Q^i$, we get
                \begin{align*}
                  \iint_{T_{Q'}} |\nabla \varphi_1| \lesssim \sum_l \frac{1}{\eps^2} \sigma(E_{Q(l)}) \cdot \inf_{Q(l)} N_*u.
                \end{align*}
                Since $V_l$ meets $T_{Q'}$, we know that $\dist(Q(l),Q') \lesssim \ell(Q')$. In particular, all the relevant cubes $Q(l)$ are contained 
                in some nearby cubes $Q''$ such that $\ell(Q'') \approx \ell(Q')$ and $\dist(Q'',Q') \lesssim \ell(Q')$. The number of such 
                $Q''$ is uniformly bounded. Thus, since the sets $E_{Q(l)}$ are disjoint, we get
                \begin{align*}
                  \sum_l \frac{1}{\eps^2} \sigma(E_{Q(l)}) \cdot \inf_{Q(l)} N_*u
                  \le \frac{1}{\eps^2} \sum_{Q''} \int_{Q''} N_* u \lesssim \frac{1}{\eps^2} \int_{\beta_0 \Delta_{Q'}} N_*u
                \end{align*}
                for some uniform constant $\beta_0$.
  \end{enumerate}
\end{proof}
Let us then consider the dyadic total variation of the whole approximating function $\varphi$:

\begin{proposition}
  \label{proposition:local_gradient_bound}
  Suppose that $Q' \in \D_{Q_0}$ and $\overrightarrow{\Psi} \in C_0^1(W_{Q'})$ with $\|\overrightarrow{\Psi}\|_{L^\infty} \le 1$, where $W_{Q'} \subset \Omega$ is any bounded and open set satisfying $T_{Q'} \subset W_{Q'}$. Then
  \begin{align*}
    \iint_{T_{Q'}} \varphi \, \text{div} \overrightarrow{\Psi} \lesssim \frac{1}{\eps^2} \int_{\beta \Delta_{Q'}} N_*^{\alpha_0} u \, d\sigma,
  \end{align*}
  where $\beta > 0$ is a uniformly bounded constant and $\alpha_0 > 0$ is the constant in Lemma \ref{lemma:large_cones}.
\end{proposition}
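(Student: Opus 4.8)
The plan is to observe that this proposition is an essentially immediate consequence of parts (ii) and (iii) of Lemma \ref{lemma:local_pointwise_bounds}, once the integral is split according to the piecewise definition of $\varphi$.

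First I would record the domain decomposition. Since $Q' \in \D_{Q_0}$ we have $T_{Q'} \subseteq T_{Q_0}$, and since $\overrightarrow{\Psi}$ is compactly supported in $\Omega$ (where $u$, and hence $\varphi$, is bounded on the relevant compact set) the quantity $\iint_{T_{Q'}} \varphi\,\text{div}\overrightarrow{\Psi}$ is an ordinary absolutely continuous integral. By the definition of $\varphi$ we have $\varphi = \varphi_0$ on $T_{Q_0}\setminus\overline{\Omega_1}$ and $\varphi = \varphi_1$ on $\Omega_1$, while $\partial\Omega_1$ has vanishing $(n+1)$-dimensional Lebesgue measure; since also $\Omega_1 \subseteq T_{Q_0}$, it follows that, up to a null set,
\begin{align*}
  \iint_{T_{Q'}} \varphi\,\text{div}\overrightarrow{\Psi}
  &= \iint_{T_{Q'}\setminus\overline{\Omega_1}} \varphi_0\,\text{div}\overrightarrow{\Psi}
  + \iint_{T_{Q'}\cap\Omega_1} \varphi_1\,\text{div}\overrightarrow{\Psi}.
\end{align*}
Because $\varphi_1$ has been extended by zero off $\Omega_1$, the last integral equals $\iint_{T_{Q'}}\varphi_1\,\text{div}\overrightarrow{\Psi}$.

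Next I would simply invoke Lemma \ref{lemma:local_pointwise_bounds}: part (ii) bounds the first term by $\tfrac{1}{\eps^2}\int_{\beta\Delta_{Q'}} N_*^{\alpha_0}u\,d\sigma$ and part (iii) bounds $\iint_{T_{Q'}}\varphi_1\,\text{div}\overrightarrow{\Psi}$ by $\tfrac{1}{\eps^2}\int_{\beta\Delta_{Q'}} N_*u\,d\sigma$, where, after enlarging $\beta$ if necessary, a single uniform dilation constant works for both. To combine these I would use the elementary pointwise inequality $N_*u \le N_*^{\alpha_0}u$: for every cube $Q \ni x$ the choice $P = Q$ is admissible in the defining union \eqref{defin:large_cones} of $\Gamma_{\alpha_0}(x)$, so $\Gamma(x) \subseteq \Gamma_{\alpha_0}(x)$, and taking suprema gives the inequality. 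Monotonicity of the integral then yields $\iint_{T_{Q'}}\varphi\,\text{div}\overrightarrow{\Psi} \lesssim \tfrac{1}{\eps^2}\int_{\beta\Delta_{Q'}} N_*^{\alpha_0}u\,d\sigma$, which is the assertion.

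I do not expect a genuine obstacle here: all of the substantive work — the divergence-theorem manipulations, the upper ADR bounds for the boundary pieces of the sets $A_k^{\pm}$ and $V_k^i$, and the passage from Carleson packing to sparseness for the collections $G^*$ and $\Rc\cup\Bc$ — has already been carried out in Lemma \ref{lemma:local_pointwise_bounds}, drawing on Lemma \ref{lemma:stopping_carleson}, Corollary \ref{corollary:bad_cubes_carleson} and Lemma \ref{lemma:large_cones}. The only point requiring mild care is bookkeeping: one must make sure that the hypothesis $T_{Q'}\subseteq W_{Q'}$ is exactly what is needed for $\overrightarrow{\Psi}\in C_0^1(W_{Q'})$ to be an admissible test field in each sub-integral used in that lemma, and that the resulting constants $\beta$ and $\alpha_0$ are the uniform constants that will later feed into the global estimate.
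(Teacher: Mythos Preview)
Your proposal is correct and follows essentially the same approach as the paper: split $\iint_{T_{Q'}}\varphi\,\mathrm{div}\,\overrightarrow{\Psi}$ according to $\varphi=\varphi_0$ on $T_{Q'}\setminus\overline{\Omega_1}$ and $\varphi=\varphi_1$ on $\Omega_1$, and then invoke parts (ii) and (iii) of Lemma~\ref{lemma:local_pointwise_bounds}. The only cosmetic difference is that the paper, rather than citing part (iii) outright, re-expands the $\varphi_1$ integral over the pieces $V_k\cap T_{Q'}$ and redoes the divergence-theorem bookkeeping; this is done to make the subsequent Remark~\ref{remark:modified_local_bounds} (where one intersects with or removes an additional Carleson box) transparent, but for the proposition itself your direct invocation is entirely sufficient.
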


\begin{proof}
  We start by splitting the integral with respect to $\varphi_0$ and $\varphi_1$.
  \begin{align*}
    \iint_{T_{Q'}} \varphi \, \text{div} \overrightarrow{\Psi}
    = \iint_{T_{Q'} \setminus \overline{\Omega_1}} \varphi_0 \, \text{div} \overrightarrow{\Psi}
    + \iint_{T_{Q'} \cap \overline{\Omega_1}} \varphi_1 \, \text{div} \overrightarrow{\Psi}.
  \end{align*}
  For the first integral, we can simply use the part ii) of Lemma \ref{lemma:local_pointwise_bounds}. For the second integral we get 
  \begin{align*}
    \iint_{T_{Q'} \cap \overline{\Omega_1}} \varphi_1 \text{div} \overrightarrow{\Psi}
    &= \sum_k \iint_{V_k \cap T_{Q'}} \varphi_1 \text{div} \overrightarrow{\Psi} \\
    &= \sum_k \left( \iint_{V_k \cap T_{Q'}} \text{div}( \varphi_1 \overrightarrow{\Psi} ) - \iint_{V_k \cap T_{Q'}} \nabla \varphi_1 \cdot \overrightarrow{\Psi} \right) \\
    &\le \sum_k \left| \iint_{V_k \cap T_{Q'}} \text{div}( \varphi_1 \overrightarrow{\Psi} ) \right| + \sum_k \iint_{V_k \cap T_{Q'}} |\nabla \varphi_1|.
  \end{align*}
  The second sum is just as in the proof of part iii) of Lemma \ref{lemma:local_pointwise_bounds} and thus, we can bound it by
  $C\eps^{-2} \int_{\beta_0 \Delta_{Q'}} N_*u$. For the first sum, we use the divergence theorem and Theorem \ref{thm:sparse_carleson} and get
  \begin{align*}
    \sum_k \left| \iint_{V_k \cap T_{Q'}} \text{div}( \varphi_1 \overrightarrow{\Psi} ) \right|
    &\le \sum_k \iint_{\partial(V_k \cap T_{Q'})} \left| \varphi_1 \overrightarrow{\Psi} \cdot \overrightarrow{N} \right| \\
    &\le \sum_k \sup_{U_{Q(k)}} |u| \cdot \Hc^n(V_k \cap \partial T_{Q'}) \\
    &\le \sum_{k: \, \dist(Q(k),Q') \lesssim \ell(Q')} \inf_{Q(k)} N_* u \cdot \sigma(Q(k)) \\
    &\lesssim \frac{1}{\eps^2}\sum_{k: \, \dist(Q(k),Q') \lesssim \ell(Q')} \inf_{Q(k)} N_* u \cdot \sigma(E_{Q(k)}).
  \end{align*}
  By the structure of the Whitney regions, we know $V_k \cap T_{Q'} = \emptyset$ if $\ell(Q(k)) \gg \ell(Q')$ or $\dist(Q(k),Q') \gg \ell(Q')$. Thus, there exists a uniform constant $\beta_1 > 0$ such that $Q(k) \subset \beta_1 \Delta_{Q'}$ for every $k$ in the sum above. 
  We may cover $\beta_1 \Delta_{Q'}$ by a uniformly bounded number of disjoint cubes $P_j$ such that $\ell(P_j) \approx \ell(Q')$. 
  This gives us
  \begin{align*}
    \sum_{k: \, \dist(Q(k),Q') \lesssim \ell(Q')} \inf_{Q(k)} N_* u \cdot \sigma(E_{Q(k)}) &\le \sum_{k: \, \dist(Q(k),Q') \lesssim \ell(Q')} \int_{E_{Q(k)}} N_* u  \\
                                                            &\le \sum_j \int_{P_j} N_* u \, d\sigma \\
                                                            &\le \int_{\beta_2 \Delta_{Q'}} N_* u \, d\sigma
  \end{align*}
  for some uniform constant $\beta_2 \ge \beta_1$. Combining the previous bounds finishes the proof.
\end{proof}

\begin{remark}
  \label{remark:modified_local_bounds}
  We notice that the previous proposition holds also in the following form: If we have cubes $Q',Q_1,Q_2 \in \D_{Q_0}$ and $\overrightarrow{\Psi} \in C_0^1(W_{Q'})$ with $\|\overrightarrow{\Psi}\|_{L^\infty} \le 1$ for an open and bounded set $W_{Q'}$ containing $T_{Q'}$, then
  \begin{align*}
    \iint_{(T_{Q'} \cap T_{Q_1}) \setminus T_{Q_2}} \varphi \, \text{div} \overrightarrow{\Psi} \lesssim \frac{1}{\eps^2} \min\left\{ \int_{\beta_2 \Delta_{Q'}} N_* u \, d\sigma, \int_{\beta_2 \Delta_{Q_1}} N_* u \, d\sigma \right\}
  \end{align*}
  for some uniform constant $\beta_2$. Indeed, in the previous two proofs, we needed only the upper ADR estimates for 
  the boundaries of $A_m$ and $V_k$ and these estimates remain valid if we remove a finite number of 
  pieces whose boundaries satisfy an upper ADR estimate. By \cite[Proposition A.2]{hofmannmartellmayboroda}, 
  $\partial T_Q$ is ADR for every $Q \in \D(E)$. Also, by the stucture of the regions, these modified sets are ``nice'' enough to justify 
  integration by parts that we used in the proofs.
\end{remark}

\subsection{From local to global}

Let us now construct the global approximating function. Although our construction is a little different than the 
construction in \cite[p. 2373]{hofmannmartellmayboroda}, the basic ideas are the same.

\subsubsection{$E$ is a bounded set}
\label{subsection:function_construction_bounded}

Let us first assume that $\diam(E) < \infty$. In this case, we have a cube $Q_0 \in \D(E)$ such that $E = Q_0$ and 
$\ell(Q_0) \approx \diam(E)$. We now set
\begin{align*}
  \varphi(X) \coloneqq \left\{ \begin{array}{cl}
                                 \varphi_{Q_0}(X), &\text{ if } X \in T_{Q_0} \\
                                 u(X), &\text{ if } X \in \Omega \setminus T_{Q_0}
                               \end{array} \right. ,
\end{align*}
where $\varphi_{Q_0}$ is the function constructed in Section \ref{section:construction_local}.
By part i) of Lemma \ref{lemma:local_pointwise_bounds}, we have $N_*(u-\varphi)(x) \le \eps M_\D(N_*u)(x)$ on $E$.
As for the $\Cc_\D$ bound, we first notice that for any $Q \in \D_{Q_0}$ Proposition \ref{proposition:local_gradient_bound} gives us
\begin{align}
  \frac{1}{\sigma(Q)} \iint_{T_Q} |\nabla \varphi| \lesssim \frac{1}{\eps^2} M(N_*^{\alpha_0}u)(x)
\end{align}
for every $x \in Q$ since $\sigma(Q) \approx \sigma(\beta \Delta_Q)$. Let us now fix a cube $F_k \in \D^*$ (recall the definition of $\D^*$ in Section \ref{section:cc}), take any 
$\overrightarrow{\Psi} \in C_0^1(T_{F_k})$ with $\|\overrightarrow{\Psi}\|_{L^\infty} \le 1$
and modify the argument in \cite[p. 2353]{hofmannmartellmayboroda}.
We denote $R \coloneqq 2^k \diam(E)$ and thus have $T_{F_k} = B(z_0,R)$. By a suitable choice of parameters in the construction 
of the Whitney regions in \cite{hofmannmartellmayboroda}, the Carleson box $T_{Q_0}$ is so large that we may fix a ball 
$B(z_0,r) \subset T_{Q_0}$ such that $r \ge 2\diam(E)$. Because of this, we may fix a uniform constant $\alpha_1$ such that 
a small enlargement of $B(z_0,R) \setminus B(z_0,r)$ is contained in $\widehat{\Gamma}_{\alpha_1}(x)$ (recall the definition of $\widehat{\Gamma}_{\alpha_1}(x)$
in Section \ref{subsection:non-tangential}) for every $x \in E$. We split
\begin{align*}
  \frac{1}{\ell(F_k)^n} \iint_{T_{F_k}} \varphi \, \text{div} \overrightarrow{\Psi} = \frac{1}{\ell(F_k)^n} \iint_{T_{Q_0}} \varphi \, \text{div} \overrightarrow{\Psi} + \frac{1}{\ell(F_k)^n} \iint_{T_{F_k} \setminus T_{Q_0}} \varphi \, \text{div} \overrightarrow{\Psi}.
\end{align*}
By Proposition \ref{proposition:local_gradient_bound}, we can bound the first integral by 
$M(N_*^{\alpha_0}u)(x)$ for any $x \in Q_0$. As for the second integral, we use the smoothness of $u$, H\"older's inequality and Caccioppoli's inequality to get
\begin{align*}
  \iint_{T_{F_k} \setminus T_{Q_0}} \varphi \, \text{div} \overrightarrow{\Psi}
  &= \iint_{T_{F_k} \setminus T_{Q_0}} u \, \text{div} \overrightarrow{\Psi} \\
  &\le \iint_{B(z_0,R) \setminus T_{Q_0}} |\nabla u| \\
  &\le \iint_{B(z_0,R) \setminus B(z_0,r)} |\nabla u| \\
  &\lesssim \left( \iint_{B(z_0,R) \setminus B(z_0,r)} |\nabla u|^2 \right)^{1/2} R^{\tfrac{n+1}{2}} \\
  &\le \left( \sum_{0 \le j \le \log_2(R/r)} \iint_{2^j r \le |z_0-X| < 2^{j+1}r} |\nabla u(X)|^2 \right)^{1/2} R^{\tfrac{n+1}{2}} \\
  &\lesssim \inf_E N_*^{\alpha_1}u \cdot \left( \sum_{0 \le j \le \log_2(R/r)} (2^j r)^{n-1} \right)^{1/2} R^{\tfrac{n+1}{2}} \\
  &\lesssim \inf_E N_*^{\alpha_1}u \cdot R^{\tfrac{n-1}{2}} R^{\tfrac{n+1}{2}} \\
  &\le R^n M(N_*^{\alpha_1}u)(x)
\end{align*}
for every $x \in Q_0$. Combining the calculations and the cases gives us the desired $\Cc_\D$ bound.

\subsubsection{$E$ is an unbounded set}
\label{section:E_unbounded}

Suppose then that $\diam(E) = \infty$. We fix a sequence of cubes $Q_i \in \D(E)$, $i \in \N$, such that $\bigcup_i Q_i = E$ and
$Q_i \subsetneq Q_{i+1}$ and $\ell(Q_i) < \gamma_0 \ell(Q_{i+1})$ for every $i$, where we fix the value of the constant $\gamma_0$ later.
We set
\begin{align*}
  W_1 \coloneqq T_{Q_1}, \ \ \ \ \ \ W_k \coloneqq T_{Q_k} \setminus T_{Q_{k-1}}
\end{align*}
and
\begin{align*}
  \varphi_k \coloneqq 1_{W_k} \varphi_{Q_k}, \ \ \ \ \ \varphi \coloneqq \sum_k \varphi_k.
\end{align*}
Here $\varphi_{Q_k}$ is the function constructed in Section \ref{section:construction_local} for the cube $Q_k$.
The sets $W_k$ cover the whole space $\Omega$ and since $T_{Q_i} \subset T_{Q_{i+1}}$ for every $i$, they are also pairwise disjoint.
Let us consider the pointwise bound for $N_*(u-\varphi)$. Fix $x \in E$ and let $Q_m$ be the smallest of the previously chosen 
cubes such that $x \in Q_m$. Now, if $\Gamma(x) \cap T_{Q_j} = \emptyset$ for every $j = 1,2,\ldots,m-1$, then the pointwise 
bound follows directly from part i) of Lemma \ref{lemma:local_pointwise_bounds}. Suppose then that there exists a point 
$Y \in \Gamma(x) \cap T_{Q_j}$ for some $j < m$. We may assume that $Y \notin T_{Q_i}$ for all $i < j$. 
By the structure of the sets, there exist now cubes $P_1 \subset Q_m$ and $P_2 \subset Q_j$ such that 
$\ell(P_1) \approx \ell(P_2)$, $\dist(P_1,P_2) \lesssim \ell(P_1)$, $Y \in U_{P_1} \cap U_{P_2}$ and 
$\varphi(Y) = \varphi|_{U_{P_2}}(Y)$. By the considerations in the proof of part i) of Lemma \ref{lemma:local_pointwise_bounds},
we know that $|u(Y) - \varphi(Y)| \le \eps M_\D(N_*u)(P_2)$. By the properties of $P_1$ and $P_2$, there exists a uniform constant 
$\beta_0$ such that $P_1 \subset \beta_0 \Delta_{Q}$ for any $Q \in \D(E)$ such that $Q \supseteq P_2$. In particular,
\begin{align*}
  \eps M_\D(N_*u)(P_2)
  &= \eps \sup_{Q \in \D(E), P_2 \subseteq Q} \fint_{Q} N_*u \, d\sigma \\
  &\lesssim \eps \sup_{Q \in \D(E), P_2 \subseteq Q} \fint_{\beta_0 \Delta_Q} N_*u \, d\sigma
  \le \eps M(N_* u)(x).
\end{align*}
Thus,
\begin{align*}
  N_*(u-\varphi)(x) &= \sup_{Y \in \Gamma(x)} |u(Y) - \varphi(Y)| \\
                    &= \sup_{k \in \N} \sup_{Y \in \Gamma(x) \cap W_k} |u(Y) - \varphi(Y)|
                    \lesssim \eps M_\D(N_* u)(x).
\end{align*}

Let us then prove the $\Cc_\D$ estimate. We fix a point $x \in E$ and a cube $Q \in \D(E)$ such that $x \in Q$ and split 
the proof to three different cases. Below, $\beta$ and $\alpha$ are uniform constants and $m$ is the smallest such number
that $T_Q \subset T_{Q_m}$.
\begin{enumerate}
  \item[1)] $T_Q \subset T_{Q_m}$ such that $T_Q \cap T_{Q_k} = \emptyset$ for every $k < m$. Now we simply have
            \begin{align*}
              \iint_{T_Q} |\nabla \varphi| = \iint_{T_Q} |\nabla \varphi_m| \lesssim \frac{1}{\eps^2} \int_{\beta \Delta_Q} N_*^{\alpha} u \, d\sigma
            \end{align*}
            by Proposition \ref{proposition:local_gradient_bound}.
  
  \item[2)] $T_Q \subset T_{Q_m}$ and $Q_k \subset Q$ for every $k < m$. Take any $\overrightarrow{\Psi} \in C_0^1(T_Q)$ with $\|\overrightarrow{\Psi}\|_{L^\infty} \le 1$. We get
            \begin{align*}
              \iint_{T_Q} \varphi \, \text{div} \overrightarrow{\Psi}
               &=\iint_{T_Q \setminus T_{Q_{m-1}}} \varphi_m \, \text{div} \overrightarrow{\Psi} 
                 + \sum_{i=1}^{m-2} \iint_{T_{Q_{m-i}} \setminus T_{Q_{m-(i+1)}}} \varphi_{m-i} \, \text{div} \overrightarrow{\Psi}
                 + \iint_{T_{Q_1}} \varphi_1 \, \text{div} \overrightarrow{\Psi} \\
                 &\lesssim \frac{1}{\eps^2} \int_{\beta \Delta_Q} N_*^\alpha u \, d\sigma + \sum_{i=1}^{k-1} \frac{1}{\eps^2} \int_{\beta \Delta_{Q_i}} N_*^\alpha u \, d\sigma
            \end{align*}
            by Remark \ref{remark:modified_local_bounds}. We note that the balls $\beta \Delta_{Q_i}$ form an increasing sequence with respect to 
            inclusion. If we choose the constant $\gamma_0$ to be large enough, the balls $\beta \Delta_{Q_i}$ satisfy a Carleson packing condition independent of $m$. 
            Thus, for a large enough $\gamma_0$, we get
            \begin{align*}
              \frac{1}{\eps^2} \int_{\beta \Delta_Q} N_*^\alpha u \, d\sigma
                                                      + \sum_{i=1}^{k-1} \frac{1}{\eps^2} \int_{\beta \Delta_{Q_i}} N_*^\alpha u \, d\sigma 
              \lesssim \frac{1}{\eps^2} \int_{\beta \Delta_Q} M_\D(N_*^\alpha u) \, d\sigma.
            \end{align*}
            by a simple dyadic covering argument and the discrete Carleson embedding theorem (Theorem \ref{theorem:carleson_embedding}).
  
  \item[3)] $T_Q \subset T_{Q_m}$, $Q_k \not\subset Q$ for every $k < m$ and $T_Q \cap T_{Q_{m-1}} \neq \emptyset$. Without loss 
            of generality, we may assume that $\ell(Q) \approx \ell(Q_{m-1})$. Take any $\overrightarrow{\Psi} \in C_0^1(T_Q)$ with $\|\overrightarrow{\Psi}\|_{L^\infty} \le 1$. We get
            \begin{align*}
              \iint_{T_Q} \varphi \, \text{div} \overrightarrow{\Psi}
               &=\iint_{T_Q \setminus T_{Q_{m-1}}} \varphi_m \, \text{div} \overrightarrow{\Psi} 
                 + \sum_{i=1}^{m-2} \iint_{(T_Q \cap T_{Q_{m-i}}) \setminus T_{Q_{m-(i+1)}}} \varphi_{m-i} \, \text{div} \overrightarrow{\Psi} \\
                 &\qquad + \iint_{T_Q \cap T_{Q_1}} \varphi_1 \, \text{div} \overrightarrow{\Psi} \\
                 &\lesssim \frac{1}{\eps^2} \int_{\beta \Delta_Q} N_*^\alpha u \, d\sigma + \sum_{i=1}^{k-1} \frac{1}{\eps^2} \int_{\beta \Delta_{Q_i}} N_*^\alpha u \, d\sigma
            \end{align*}
            by Remark \ref{remark:modified_local_bounds}. Again, if we choose the constant $\gamma_0$ to be large enough, 
            we get
            \begin{equation*}
              \begin{split}
                \frac{1}{\eps^2} \int_{\beta \Delta_Q} N_*^\alpha u \, d\sigma + \sum_{i=1}^{k-1} \frac{1}{\eps^2} \int_{\beta \Delta_{Q_i}} N_*^\alpha u \, d\sigma
                \lesssim \frac{1}{\eps^2} \int_{\beta \Delta_Q} M_\D(N_*^\alpha u) \, d\sigma
              \end{split}
            \end{equation*}
            by a simple dyadic covering argument and the discrete Carleson embedding theorem (Theorem \ref{theorem:carleson_embedding}).
\end{enumerate}
Since $\sigma(Q) \approx \sigma(\beta \Delta_Q)$, combining the three cases gives us
\begin{align*}
  \frac{1}{\sigma(Q)} \iint_{T_Q} |\nabla \varphi| &\lesssim \frac{1}{\eps^2} \frac{1}{\sigma(Q)} \int_{\beta \Delta_Q} M_\D(N_*^\alpha u) \, d\sigma 
                                                   \lesssim \frac{1}{\eps^2} M(M_\D(N_*^\alpha u))(x)
\end{align*}
for almost every $x \in Q$. This completes the proof of Theorem \ref{thm:main_result_pointwise}.

\appendix

\section{Discrete Carleson embedding theorem}
\label{appendix:embedding_theorem}

For the convenience of the reader, we prove here the version of the Carleson embedding theorem that we used in Section \ref{section:E_unbounded}.

\begin{theorem}
  \label{theorem:carleson_embedding}
  Suppose that $\mu$ is a locally finite doubling Borel measure in a (quasi)metric space $X$ satisfying $\mu(B(x,r)) > 0$ for any $r > 0$ 
  and $\D$ is a dyadic system in $X$.
  Let $f \ge 0$ be a locally integrable function. If $\Ac \subset \D$ is a collection that satisfies a 
  Carleson packing condition with a constant $\Lambda \ge 1$, then
  \begin{align*}
    \sum_{Q \in \Ac, Q \subset Q_0} \int_Q f \, d\mu \le \Lambda \int_{Q_0} M_\D f \, d\mu
  \end{align*}
  for any $Q_0 \in \D$.
\end{theorem}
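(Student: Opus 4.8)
The plan is to prove the discrete Carleson embedding theorem (Theorem \ref{theorem:carleson_embedding}) by a standard stopping-time / "principal cubes" decomposition of the collection $\Ac$, grouping together cubes that have comparable averages of $f$, and then comparing the resulting sum against the integral of $M_\D f$. This is essentially the dyadic maximal function argument used to prove the classical Carleson embedding theorem, adapted to a collection $\Ac$ satisfying only a packing condition rather than to all of $\D$.

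First I would reduce to the case where $f$ is integrable on $Q_0$ (otherwise the right-hand side is infinite, since $M_\D f \ge f$ pointwise and $\mu(Q_0) > 0$, so there is nothing to prove) and where all cubes in the sum are subcubes of $Q_0$. For each $Q \in \D$ with $Q \subseteq Q_0$ set $\langle f \rangle_Q := \fint_Q f \, d\mu$. Choose a large constant $\Theta > 1$ (e.g. $\Theta = 2$) and build the principal subcubes of $Q_0$ in the usual way: $\mathcal{P}_0 := \{Q_0\}$, and having defined $\mathcal{P}_j$, for each $P \in \mathcal{P}_j$ let the children of $P$ in $\mathcal{P}_{j+1}$ be the maximal dyadic subcubes $P' \subsetneq P$ with $\langle f \rangle_{P'} > \Theta \langle f \rangle_P$; set $\mathcal{P} := \bigcup_j \mathcal{P}_j$. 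For each $Q \subseteq Q_0$ let $\pi(Q)$ denote the smallest cube of $\mathcal{P}$ containing $Q$; then by maximality $\langle f \rangle_Q \le \Theta \langle f \rangle_{\pi(Q)}$ for every such $Q$. The standard fact about principal cubes is that the sets $E_P := P \setminus \bigcup\{P' \in \mathcal{P} : P' \subsetneq P \text{ maximal}\}$ are pairwise disjoint and satisfy $\mu(P) \lesssim_\Theta \mu(E_P)$ (because the children $P'$ of $P$ in $\mathcal{P}$ satisfy $\sum_{P'} \mu(P') \le \Theta^{-1}\langle f\rangle_P^{-1}\sum_{P'}\int_{P'} f \, d\mu \le \Theta^{-1}\mu(P)$ by Chebyshev); moreover $\langle f\rangle_P \lesssim \inf_{x \in E_P} M_\D f(x)$, or more simply $\langle f\rangle_P \le M_\D f(x)$ for every $x \in P$.

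Next I would organize the sum over $\Ac$ according to $\pi$. Writing
\begin{align*}
  \sum_{\substack{Q \in \Ac \\ Q \subseteq Q_0}} \int_Q f \, d\mu
  = \sum_{P \in \mathcal{P}} \ \sum_{\substack{Q \in \Ac,\ Q \subseteq Q_0 \\ \pi(Q) = P}} \int_Q f \, d\mu
  = \sum_{P \in \mathcal{P}} \ \sum_{\substack{Q \in \Ac,\ Q \subseteq Q_0 \\ \pi(Q) = P}} \langle f \rangle_Q \, \mu(Q)
  \le \Theta \sum_{P \in \mathcal{P}} \langle f \rangle_P \sum_{\substack{Q \in \Ac,\ Q \subseteq P}} \mu(Q),
\end{align*}
and then invoking the Carleson packing condition $\sum_{Q \in \Ac, Q \subseteq P} \mu(Q) \le \Lambda\, \mu(P)$ for each $P \in \mathcal{P}$, this is at most $\Theta \Lambda \sum_{P \in \mathcal{P}} \langle f \rangle_P\, \mu(P) \lesssim_\Theta \Theta \Lambda \sum_{P \in \mathcal{P}} \langle f \rangle_P\, \mu(E_P) \le \Theta \Lambda \sum_{P \in \mathcal{P}} \int_{E_P} M_\D f \, d\mu \le \Theta \Lambda \int_{Q_0} M_\D f \, d\mu$, using disjointness of the $E_P$'s inside $Q_0$. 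This gives the inequality up to a structural constant depending on $\Theta$.

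The one subtlety — and the main thing to be careful about — is that the statement asks for the clean constant $\Lambda$ rather than $C_\Theta \Lambda$. To get this I would bypass the $\mu(P)\lesssim\mu(E_P)$ comparison and instead use the sharper bookkeeping: for $P \in \mathcal{P}$, split off the part of $P$ not covered by $\mathcal{P}$-children, and run the telescoping/stopping argument so that $\sum_{P \in \mathcal{P}} \langle f\rangle_P\,\mu(P)$ itself telescopes against $\int_{Q_0} M_\D f\,d\mu$ with constant $1$; alternatively, note that $\langle f\rangle_{\pi(Q)} \le \inf_{x\in Q} M_\D f(x) \le \fint_Q M_\D f\,d\mu$ directly (since $\pi(Q)\supseteq Q$ is an admissible cube in the definition of $M_\D$), so
\begin{align*}
  \sum_{\substack{Q\in\Ac\\ Q\subseteq Q_0}}\int_Q f\,d\mu
  \le \sum_{\substack{Q\in\Ac\\ Q\subseteq Q_0}} M_\D f\big|_Q \cdot \mu(Q)
\end{align*}
is not quite what is wanted either, so the precise route is: use $\int_Q f \le \int_Q M_\D f$ is too lossy; rather use that for $x \in Q$, $f(x) \le \langle f\rangle_{\pi(Q)}\cdot\Theta$ is false pointwise — so the principal-cube grouping above genuinely is needed, and the honest statement one proves is with constant $C\Lambda$. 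I expect the cleanest correct version is exactly the displayed chain above, and if the literal constant $\Lambda$ is desired one takes $\Theta \to 1^+$ is not available discretely; instead one observes $\langle f\rangle_{\pi(Q)}\mu(Q) \le \int_Q M_\D f\,d\mu$ because $M_\D f \ge \langle f\rangle_{\pi(Q)}$ on all of $\pi(Q)\supseteq Q$, giving directly $\sum_{Q\in\Ac,Q\subseteq Q_0}\int_Q f\,d\mu \le \sum_{Q\in\Ac,Q\subseteq Q_0}\Theta\langle f\rangle_{\pi(Q)}\mu(Q)$ — and here instead summing $\langle f\rangle_Q\mu(Q)$ with $\langle f\rangle_Q \le M_\D f(x)$ for all $x\in Q$ yields $\sum_Q\int_Q f\,d\mu \le \sum_Q \mu(Q)\inf_Q M_\D f$, then the Carleson condition turns $\sum_{Q\in\Ac,Q\subseteq Q_0}\mu(Q)g(Q)$ into $\Lambda\int_{Q_0}$ of the corresponding maximal average by the standard "Carleson measure pairing" lemma. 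I would settle this constant-tracking at the very end; the structural content of the proof is the principal-cube decomposition plus the packing condition, and that is the part to write out carefully.
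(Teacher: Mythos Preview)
Your principal-cubes argument is a perfectly valid route to the embedding inequality, but it is genuinely different from the paper's proof and it does not quite reach the statement as written. The paper argues via level sets: it introduces the averaging operators $\Tc_m f(x)=\sum_{\ell(Q)=2^{-m}} 1_Q(x)\fint_Q f$ and the measure $d\nu(x,m)=\big(\sum_{Q\in\Ac,\,\ell(Q)=2^{-m}}1_Q(x)\big)\,d\mu(x)$, rewrites the left-hand side as $\int_0^\infty \nu(E_\lambda^*)\,d\lambda$ with $E_\lambda^*=\{(x,m):x\in Q_0,\ 2^{-m}\le\ell(Q_0),\ \Tc_m f(x)>\lambda\}$, and then proves the distributional inequality $\nu(E_\lambda^*)\le\Lambda\,\mu(\{x\in Q_0:M_\D f(x)>\lambda\})$ by observing that $E_\lambda^*$ sits inside the union of towers $R_j\times\{m:2^{-m}\le\ell(R_j)\}$ over the maximal cubes $R_j$ where the average exceeds $\lambda$, and applying the packing condition on each $R_j$. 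This level-set argument delivers the sharp constant $\Lambda$ with no loss.

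Your approach, by contrast, groups $\Ac$-cubes by their principal ancestor, uses $\langle f\rangle_Q\le\Theta\langle f\rangle_{\pi(Q)}$, applies the packing bound inside each principal cube $P$, and then converts $\mu(P)$ to $\mu(E_P)$ at a further multiplicative cost of $\Theta/(\Theta-1)$. The outcome is the inequality with constant $\tfrac{\Theta^2}{\Theta-1}\,\Lambda$ (so $4\Lambda$ at $\Theta=2$), and your final paragraph correctly senses that the sharp $\Lambda$ is out of reach by this method: the discrete parameter $\Theta>1$ cannot be sent to $1$, and none of the alternative bookkeeping you sketch actually closes the gap. For the paper's applications only the qualitative bound matters, so your argument would suffice there; but if the exact constant in the theorem statement is the target, the level-set proof is the cleaner tool, and you should replace the stopping-time argument with it.
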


\begin{proof}
  For every $m \in \Z$, we define the averaging operator $\Tc_m$ by setting
  \begin{align*}
    \Tc_mf(x) = \sum_{\substack{Q \in \D \\ \ell(Q) \coloneqq 2^{-m}}} 1_{Q}(x) \fint_{Q} f \, d\mu,
  \end{align*}
  and we define the measure $\nu$ by setting
  \begin{align*}
    d\nu(x,m) = \left(\sum_{Q \in \Ac, \ell(Q) = 2^{-m}} 1_{Q}(x) \right) d\mu(x).
  \end{align*}
  Now we have
  \begin{align*}
    \sum_{Q \in \Ac, Q \subset Q_0} \int_Q f \, d\mu &= \sum_{Q \in \Ac, Q \subset Q_0} \mu(Q) \fint_Q f \, d\mu \\
                                                     &= \sum_{m: \, 2^{-m} \le \ell(Q_0)} \sum_{\substack{Q \in \Ac \\ \ell(Q) = 2^{-m}}} \int_{Q_0} 1_{Q} \left(  \fint_{Q} f \right) \, d\mu \\
                                                     &= \sum_{m: \, 2^{-m} \le \ell(Q_0)} \int_{Q_0} \Tc_mf(x) \, d\nu(x,m) \\
                                                     &= \int_0^\infty \nu(E_\lambda^*) \, d\lambda,
  \end{align*}
  where $E_\lambda^* \coloneqq \{(x,m) \colon x \in Q_0, 2^{-m} \le \ell(Q_0), \Tc_mf(x) > \lambda\}$. Thus, to prove the claim, we 
  only need to show that $\nu(E_\lambda^*) \le \Lambda \mu(E_\lambda)$, where $E_\lambda \coloneqq \{x \in Q_0 \colon \sup_m \Tc_mf(x) > \lambda\}$.
  If $\mu(E_\lambda) = \infty$, the claim is trivial. Thus, we may assume that $\mu(E_\lambda) < \infty$.
  
  We notice that if $x \in E_\lambda$, then there exists a subcube $Q' \subset Q_0$ such that $x \in Q'$ and 
  $\fint_{Q'} f \, d\mu > \lambda$. By the definition of $\Tc_m$, we also have $y \in E_\lambda$ for every $y \in Q'$.
  In particular, we have maximal disjoint subcubes $R_j \subset Q_0$ such that $E_\lambda = \bigcup_j R_j$.
  We further observe the following two things:
  \begin{enumerate}
    \item[$\bullet$] If $x \in Q_0 \setminus \bigcup_j R_j$, then by the maximality of the cubes $R_j$ we have $\sup_m \Tc_mf(x) \le \lambda$.
  
    \item[$\bullet$] If $x \in Q \subset Q_0$ and $\Tc_mf(x) > \lambda$ for some $m$ such that $2^{-m} > \ell(Q)$, then 
                     there exists a cube $\widetilde{Q} \supsetneq Q$ such that $\fint_{\widetilde{Q}} f \, d\mu > \lambda$.
                     In particular, $Q \subset E_\lambda$ but $Q$ is not a maximal cube.
  \end{enumerate}
  Based on these observations, we have
  \begin{align*}
    E_\lambda^* \subset \bigcup_j R_j \times \{m \colon 2^{-m} \le \ell(R_j)\}.
  \end{align*}
  By the Carleson packing condition, we get
  \begin{align*}
    \nu(R_j \times \{m \colon 2^{-m} \le \ell(R_j)\})
    = \sum_{m: \, 2^{-m} \le \ell(R_j)} \sum_{\substack{Q' \subset R_j, Q' \in \Ac \\ \ell(Q') = 2^{-m}}} \mu(Q')
    \le \Lambda \mu(R_j)
  \end{align*}
  for every $j$. In particular, since the cubes $R_j$ are disjoint, we get
  \begin{align*}
    \nu(E_\lambda^*) \le \sum_j \nu(R_j \times \{m \colon 2^{-m} \le \ell(R_j)\}) \le \sum_j \Lambda \mu(P_j) = \Lambda \mu(E_\lambda),
  \end{align*}
  which completes the proof.
\end{proof}

\bibliographystyle{alpha}
\bibliography{approximability}

\end{document}